\DeclareSymbolFont{bbold}{U}{bbold}{m}{n}
\DeclareSymbolFontAlphabet{\mathbbm}{bbold}
\title{Embedding \cstar-algebras into the Calkin algebra}
\theoremstyle{plain}
	\newtheorem{theorem*}{Theorem}
	\newtheorem{theorem}{Theorem}[section]
	\newtheorem{thrm}{Theorem}[section]
	\newtheorem{proposition}[theorem]{Proposition}
	\newtheorem{lemma}[theorem]{Lemma}
	\newtheorem{corollary}[theorem]{Corollary}
	\newtheorem{crlr}[thrm]{Corollary}
	\newtheorem{claim}{Claim}[theorem]
\theoremstyle{definition}
	\newtheorem*{NP*}{Na{\u\i}mark's Problem}
	\newtheorem*{dia*}{The Diamond Principle ($\diamondsuit$)}
	\newtheorem{definition}[theorem]{Definition}
	\newtheorem{conjecture}[theorem]{Conjecture}
	\newtheorem{question}[theorem]{Question}
\theoremstyle{remark}
\DeclareMathOperator{\id}{id}
\DeclareMathOperator{\Ad}{Ad}
\DeclareMathOperator{\stone}{St}
\DeclareMathOperator{\clopen}{Clop}
\newcommand{\C}{\mathbb{C}}
\newcommand{\N}{\mathbb{N}}
\newcommand{\Q}{\mathbb{Q}}
\newcommand{\set}[1]{\{#1\}}					
\newcommand{\cst}{\mathrm{C}^*}
\newcommand{\cstar}{$\mathrm{C}^*$}
\newcommand{\cQ}{\mathcal Q} 
\newcommand{\cD}{\mathcal D} 
\newcommand{\cB}{\mathcal B} 
\newcommand{\cU}{\mathcal U} 
\newcommand{\cH}{\mathcal H} 
\newcommand{\cP}{\mathcal P} 
\newcommand{\bbN}{\mathbb N} 
\newcommand{\bbP}{\mathbb P} 
\newcommand{\bbC}{\mathbb C} 
\newcommand{\bbL}{\mathbb L} 
\newcommand{\bbQD}{\mathbb {QD}} 
\newcommand{\bbQ}{\mathbb Q}
\newcommand{\bbE}{\mathbb E}
\newcommand{\bbNN}{\mathbb N^{\mathbb N}} 
\DeclareMathOperator{\Fin}{Fin} 
\DeclareMathOperator{\Bf}{\mathcal{B}_f}
\renewcommand{\phi}{\varphi} 
\DeclareMathOperator{\Id}{Id}
\DeclareMathOperator{\supp}{supp}
\newcounter{my_enumerate_counter}
\newcommand{\pushcounter}{\setcounter{my_enumerate_counter}{\value{enumi}}}
\newcommand{\popcounter}{\setcounter{enumi}{\value{my_enumerate_counter}}}
\newcommand{\Addresses}{{
  \bigskip
  \footnotesize
  
  I.~Farah, \textsc{Department of Mathematics and Statistics, York University, 4700 Keelee Street,
    North York, Ontario, Canada, M3J 1P3}\par\nopagebreak
  \textit{E-mail address}: \texttt{ifarah@yorku.ca}\par\nopagebreak
  \textit{URL}: \texttt{http://www.math.yorku.ca/~ifarah/}

    \medskip
  
  G.~Katsimpas, \textsc{Department of Mathematics and Statistics, York University, 4700 Keelee Street,
    North York, Ontario, Canada, M3J 1P3}\par\nopagebreak
  \textit{E-mail address}: \texttt{gkats@mathstat.yorku.ca}
  \par\nopagebreak
  \textit{URL}: \texttt{http://sites.google.com/view/georgioskatsimpas}
 
  \medskip
  
  A.~Vaccaro, \textsc{Department of Mathematics, University of Pisa, Largo Bruno Pontecorvo 5
    Pisa, Italy, 56127 - Department of Mathematics and Statistics, York University, 4700 Keelee Street,
    North York, Ontario, Canada, M3J 1P3}\par\nopagebreak
  \textit{E-mail address}: \texttt{vaccaro@mail.dm.unipi.it}\par\nopagebreak
  \textit{URL}: \texttt{http://people.dm.unipi.it/vaccaro/index.html}
  }}
\titleformat*{\section}{\centering\large\scshape}
\author{Ilijas Farah,
Georgios Katsimpas,
Andrea Vaccaro}
\date{}
\keywords{Calkin algebra, embedding, ccc forcing,
Voiculescu's theorem.}
\thanks{Partially supported by IF's NSERC grant.}
\begin{document}
\maketitle

\begin{abstract}
We prove that, under Martin's Axiom, 
every \cstar-algebra of density character less than continuum embeds into the
Calkin algebra. Furthermore, we show that it is consistent with ZFC that there is a \cstar-algebra of
density character less than continuum that does not embed into the Calkin algebra.
\end{abstract}

\keywords

\section{Introduction} \label{sctn1}
The Calkin algebra $\cQ(H)$ is the quotient of $\cB(H)$,
the algebra of bounded linear operators on a complex, separable, infinite-dimensional Hilbert space $H$,
modulo the ideal of the compact operators $\mathcal{K}(H)$.
It is considered to be the noncommutative analogue of the Boolean algebra $\cP(\bbN)/\Fin$\footnote{$\Fin$ 
denotes the ideal of all finite subsets of $\bbN$, also known as the Fr\'echet ideal.} (see e.g., \cite{farahicm} and 
\cite{We:Set}) and,
as a consequence, results about  $\cP(\bbN)/\Fin$ often translate into questions (frequently nontrivial) about $\cQ(H)$. 
In this note we study the analogue of the question ``Which linear orderings embed into $\cP(\bbN)/\Fin$?''. 
In order to put our study into the proper context, we start by reviewing some known results about the latter problem. 

Note that  $\cP(\bbN)$ embeds into $\cP(\bbN)/\Fin$.  To define an embedding, send $A\subseteq \bbN$ 
to the equivalence class of the set $\{(2n+1)2^m: n \in \N, \ m\in A\}$. 
Every countable linear ordering~$\bbL$ embeds into $\cP(\bbN)$, and therefore into $\cP(\bbN)/\Fin$. 
One way to see this is to enumerate
the elements of $\bbL$ as $a_n$, for $n\in \bbN$, and define $\Phi\colon \bbL\to \cP(\bbN)$ 
by $\Phi(a_m)=\{n: a_n\leq a_m\}$.

There is a simple characterization of linear orderings $\bbL$ that embed into $\cP(\bbN)$. 
A linear ordering $\bbL$ embeds into $\cP(\bbN)$ if and only if it has a countable 
subset $\{a_n: n\in \bbN\}$ which is separating in the sense that for all $x<y$ in $\bbL$ there exists
$n$ such that $x\leq a_n<y$ or $x<a_n\leq y$.\footnote{This condition is strictly weaker than being separable.} 
To prove  the  direct implication, given    $\{a_n: n\in \bbN\}$, 
one can define $\Phi$ as above. The converse implication is straightforward.  
No such characterization exists for the class of linear orderings that embed into $\cP(\bbN)/\Fin$.  

Since $\cP(\bbN)/\Fin$ 
is a countably saturated atomless Boolean algebra, all linear orderings of cardinality $\aleph_1$ embed into $\cP(\bbN)/\Fin$. 
Thus the Continuum Hypothesis, CH, implies that a linear order embeds into $\cP(\bbN)/\Fin$ if and only if its cardinality 
is at most $2^{\aleph_0}$. By  \cite{laver1979linear}, if ZFC is consistent\footnote{G\"odel's Incompleteness Theorem 
implies that it is not possible to prove the consistency (i.e. the existence of a model)
of ZFC within ZFC, unless ZFC is inconsistent.}
 then 
the assertion that all linear orderings of cardinality at most $2^{\aleph_0}$ embed into $\cP(\bbN)/\Fin$ 
is relatively consistent with ZFC plus the negation of CH.
Laver's model is however an exception, and in some models of ZFC (if there are any!)  
the class of linear orderings which embed into $\cP(\bbN)/\Fin$ can be downright bizarre. This class is also very important. For example, 
Woodin's condition for the automatic continuity of  Banach algebra homomorphisms from $C([0,1])$
asserts that if there exists a discontinuous homomorphism from $C([0,1])$ into a Banach algebra then 
a nontrivial initial segment of an ultrapower $\bbNN/\cU$ embeds into $\cP(\bbN)/\Fin$
(\cite{DaWo:Introduction}).\footnote{This is usually stated in terms of embedding into the directed 
set $(\bbNN,\leq^*)$, but a linear order embeds into $(\bbNN,\leq^*)$ if and only if it embeds
into $\cP(\bbN)/\Fin$; see e.g., \cite[Proposition~0.1]{Fa:Embedding} or \cite[Lemma~3.2]{woodin1984discontinuous}.}
Every $*$-homomorphism between \cstar-algebras is automatically continuous, and all homomorphisms between \cstar-algebras are continuous in Woodin's model. It is not known whether it is provable in ZFC that every homomorphism between \cstar-algebras with dense range is continuous 
(see the introduction to \cite{ozawa2006invitation}). 

The question of what linear orderings embed into the poset of projections of the Calkin algebra or into 
the poset of self-adjoint elements of the Calkin algebra may be of an independent interest. 
However, the question that we consider here is strictly operator-algebraic: Which \cstar-algebras embed into the 
Calkin algebra? 
This is also a non-commutative analogue of the question of which abelian \cstar-algebras 
embed into $\ell_\infty/c_0$. By the Gelfand--Naimark duality, this corresponds to asking 
which compact Hausdorff spaces are continuous images of $\beta\bbN\setminus \bbN$, 
the \v{C}ech--Stone remainder of $\bbN$. By Parovi\v cenko's Theorem, 
having weight not greater than  $\aleph_1$ is a sufficient condition (alternatively, this can be proved 
  by elementary model theory;  see the discussion in \cite[p. 1820]{DoHa:Universal}). 
However, the situation in ZFC 
is quite nontrivial (\cite{DoHa:Lebesgue}, 
\cite{DoHa:Images}).

The analogue of the cardinality of a \cstar-algebra (or a topological space)  $A$
 is the density character. 
It is defined as the least cardinality of a dense subset of $A$. 
Thus the \cstar-algebras of density character $\aleph_0$ are exactly the 
separable \cstar-algebras. The density character of a nonseparable \cstar-algebra is equal to the 
minimal cardinality of a generating subset and also to 
 the minimal cardinality of a dense $(\bbQ+i\bbQ)$-subalgebra. 
Every separable \cstar-algebra embeds into $\cB(H)$ and therefore into $\cQ(H)$, by a standard amplification argument. In addition,
all \cstar-algebras of density character $\aleph_1$ embed into $\cQ(H)$, but the proof 
is surprisingly nontrivial (\cite{farah2017calkin})
due to the failure of countable saturation in 
the Calkin algebra (\cite[\S 4]{FaHa:Countable}). Since the density character of $\cQ(H)$ is $2^{\aleph_0}$, 
\cstar-algebras of larger density character do not embed into $\cQ(H)$ and once again
 CH gives the simplest possible characterization of the class of \cstar-algebras that embed into $\cQ(H)$. 
 In this note we make the next step and we investigate what happens when CH fails,
 focusing on \cstar-algebras of density character strictly less than $2^{\aleph_0}$.
 
 \begin{thrm} \label{indep}
The assertion 
`Every \cstar-algebra of density character strictly less than~$2^{\aleph_0}$ embeds into 
the Calkin algebra' is independent from ZFC. It is moreover independent from 
ZFC+$2^{\aleph_0}=\aleph_3$, and $\aleph_3$ is the minimal cardinal with this property. 
\end{thrm}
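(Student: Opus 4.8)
The plan is to prove Theorem~\ref{indep} by combining two assertions suggested by the abstract. For the consistency of the positive statement, I would show that Martin's Axiom (MA) implies that every \cstar-algebra of density character strictly less than $2^{\aleph_0}$ embeds into $\cQ(H)$. The natural strategy is a forcing-free ``MA-as-a-substitute-for-CH'' argument: build the embedding by transfinite recursion along a generating $(\bbQ+i\bbQ)$-subalgebra $\bigcup_{\alpha<\kappa}A_\alpha$, where $\kappa<2^{\aleph_0}$ is the density character, extending at each stage a $*$-homomorphism $A_\alpha\to\cQ(H)$ to $A_{\alpha+1}\to\cQ(H)$. The extension step at a countable-cofinality stage is handled directly (the Calkin algebra is the quotient of $\cB(H)$, so one can lift to $\cB(H)$, and Voiculescu's theorem supplies the approximate freedom needed to merge countably many coherent pieces). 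The genuinely new step is the extension past stages of uncountable cofinality: the obstruction is exactly the failure of countable saturation of $\cQ(H)$ identified in \cite{FaHa:Countable}, and this is where a ccc forcing notion $\bbP$ is introduced whose generic adds the required operator; MA (applied to fewer than $2^{\aleph_0}=\kappa^+$-many dense sets) then yields a sufficiently generic filter in the ground model. So the hard part is designing the ccc poset that, over a coherent family of partial liftings indexed by a set of size $<\kappa$, produces a single operator in $\cB(H)$ whose image in $\cQ(H)$ simultaneously realizes all the required relations; verifying the countable chain condition (presumably via a $\Delta$-system argument combined with Voiculescu-type perturbation estimates) is the technical core.

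For the consistency of the negation, I would exhibit, in some model of ZFC, a specific \cstar-algebra $A$ with density character $<2^{\aleph_0}$ that provably does not embed into $\cQ(H)$. The obvious candidate is an algebra built from a non-embeddable combinatorial object transported through a functorial construction, e.g.\ a (reduced or universal) group \cstar-algebra, a graph \cstar-algebra, or an AF-algebra coded by a gap or a tower in $\cP(\bbN)/\Fin$ that cannot be filled in the relevant model. Concretely, one forces (say, with a finite-support iteration of length $\aleph_2$ or $\aleph_3$ over a model of GCH) to get $2^{\aleph_0}$ large while simultaneously killing all embeddings of $A$ into $\cQ(H)$; the non-embedding is witnessed by an absoluteness/reflection argument showing that any embedding would reflect to a ground-model-definable object whose existence has been destroyed. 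Alternatively, one uses a counting/cardinal-characteristic argument: arrange that $A$ has density character $\kappa$ with $\aleph_1<\kappa<2^{\aleph_0}$ and that embeddability of $A$ would imply $\kappa\le\mathfrak{b}$ (or some analogous inequality), then force $\mathfrak{b}$ small and $2^{\aleph_0}$ large.

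For the refinement with $2^{\aleph_0}=\aleph_3$, note that the independence statement is trivial when $2^{\aleph_0}=\aleph_1$ (every algebra of density character $<\aleph_1$ is separable, hence embeds) and still true but degenerate when $2^{\aleph_0}=\aleph_2$ (density character $<\aleph_2$ means $\le\aleph_1$, so \cite{farah2017calkin} applies and embeddability holds outright, giving no independence). Thus $\aleph_3$ is the least value of the continuum for which the statement can fail, which accounts for the ``$\aleph_3$ is minimal'' clause. To show independence is consistent with $2^{\aleph_0}=\aleph_3$ specifically: on the positive side, run the MA construction in a model of $\mathrm{MA}+2^{\aleph_0}=\aleph_3$; on the negative side, run the second construction while forcing $2^{\aleph_0}=\aleph_3$ exactly (finite-support ccc iteration of length $\aleph_3$, arranging that no new embeddings of the witness algebra $A$, which has density character $\aleph_2$, appear). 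The main obstacle throughout is the first one: constructing and analyzing the ccc forcing that extends partial embeddings into $\cQ(H)$ past limit stages of uncountable cofinality, since this is precisely the place where the lack of countable saturation of the Calkin algebra makes the naive transfinite recursion break down.
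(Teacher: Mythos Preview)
Your high-level architecture is right---MA for the positive direction, a forcing construction for the negative, and the minimality-of-$\aleph_3$ clause follows exactly as you say from \cite{farah2017calkin}---and these pieces match the paper. But the execution diverges on both sides.

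For the positive direction, the paper does \emph{not} proceed by transfinite recursion along a chain $A_\alpha$. Instead it defines, once and for all, a single ccc poset $\mathbb{E}_A$ whose conditions are finite partial approximations to an embedding (a finite subset of $A$, a finite-dimensional corner of $\cB(H)$, a partial map between them, tolerances, and a ``promise'' witnessing that the condition admits at least one extension via a genuine representation). One application of MA then yields a sufficiently generic filter, and the filter assembles into the embedding. Your recursion misidentifies the obstruction: if $(\Phi_\alpha)_{\alpha<\lambda}$ is a chain of $*$-homomorphisms into $\cQ(H)$, its union is already a $*$-homomorphism regardless of $\mathrm{cf}(\lambda)$, so limit stages are not where things break. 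The genuine difficulty---choosing each $\Phi_\alpha$ so that it extends to $\Phi_{\alpha+1}$---lies at successors, and you do not say how you would handle that. The paper's poset sidesteps the extension problem by working with finite data throughout; the technical core is then the ccc proof, which, as you correctly guess, combines a $\Delta$-system argument with Voiculescu's theorem, but also invokes the main result of \cite{farah2017calkin} (applied to an auxiliary algebra of density $\aleph_1$) to produce a common target for amalgamating uncountably many conditions.

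For the negative direction, your proposal remains schematic: you list several mechanisms without committing to one. The paper's argument is short and concrete. Add $\aleph_3$ Cohen reals to a model of CH; then $2^{\aleph_0}=\aleph_3$, and an adaptation of Kunen's classical argument on towers shows that the poset of projections in $\cQ(H)$ has no chain of order type $\aleph_2$. Hence the abelian algebra $C(\aleph_2+1)$ (the ordinal with the order topology), which has density character $\aleph_2<2^{\aleph_0}$, cannot embed. No iteration, no reflection argument, and no cardinal-characteristic bookkeeping is needed.
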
 

The most involved part in the proof of Theorem \ref{indep} is showing that the
statement `All \cstar-algebras of density character strictly less than~$2^{\aleph_0}$ embed into 
$\cQ(H)$' is consistent with ZFC+$2^{\aleph_0}>\aleph_2$. This will be achieved via Theorem \ref{T1} (which is proved in \S \ref{sctn4}) using forcing.

The method of forcing was introduced by Cohen to prove the independence of CH from ZFC,
and later developed to deal with more general independence phenomena
(see \S \ref{sctn2.2}).
The \emph{countable chain condition} (or ccc) is a property of forcing notions that ensures no cardinals or 
cofinalities are collapsed, and all stationary sets are preserved,  in the forcing extension (see Definition \ref{cccMA}).  

\begin{thrm} \label{T1} For every \cstar-algebra $A$ there exists a ccc forcing notion $\mathbb{E}_A$ which 
forces that $A$ embeds into $\cQ(H)$. 
\end{thrm}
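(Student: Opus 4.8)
The plan is to build the embedding by a transfinite recursion of length $\kappa$, where $\kappa$ is the density character of $A$, while simultaneously forcing with finite approximations. Fix a dense subset $\{a_\xi : \xi < \kappa\}$ of the unit ball of $A$, and for each $\alpha \le \kappa$ let $A_\alpha$ be the separable \cstar-subalgebra generated by $\{a_\xi : \xi < \alpha\}$, so that $A = \overline{\bigcup_{\alpha < \kappa} A_\alpha}$ and the $A_\alpha$ form an increasing continuous chain of separable subalgebras. The key point is that each inclusion $A_\alpha \hookrightarrow A_{\alpha+1}$ is an inclusion of separable \cstar-algebras, and by Voiculescu's theorem (the noncommutative Weyl--von Neumann theorem) any two faithful representations of a separable \cstar-algebra that are ``sufficiently non-degenerate'' — more precisely, that annihilate no nonzero compact — are approximately unitarily equivalent modulo the compacts. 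Thus, given a unital embedding $\pi_\alpha \colon A_\alpha \to \cQ(H)$, one can hope to extend it to $\pi_{\alpha+1} \colon A_{\alpha+1} \to \cQ(H)$ after a unitary perturbation; the forcing is precisely the device that inserts, in a single step, the generic data needed to make all these extensions cohere.

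Concretely, I would let $\mathbb{E}_A$ be the poset whose conditions are finite partial approximations to liftings of such an embedding: a condition specifies, on a finite subset $F \subseteq \kappa$ and up to a finite tolerance, operators in $\cB(H)$ that are meant to be images (modulo $\mathcal K(H)$) of finitely many elements of $A$, together with bookkeeping data (finite-dimensional projections witnessing the approximate multiplicativity and $*$-linearity up to the prescribed error on the finitely many relevant words in the generators). Ordering is by refinement: extending the finite set $F$, shrinking tolerances, and enlarging the finite-dimensional ``already decided'' part. A generic filter then yields, for each $\xi < \kappa$, an honest element $\pi(a_\xi) \in \cQ(H)$, and the density/genericity arguments force the map $\xi \mapsto \pi(a_\xi)$ to be approximately multiplicative, $*$-preserving, and \emph{isometric} to within every positive error, hence to extend to a genuine \cstar-embedding $A \to \cQ(H)$. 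The isometry is the crucial requirement (an approximately multiplicative, approximately self-adjoint, approximately isometric map between \cstar-algebras is close to a $*$-homomorphism, and being isometric on a dense set gives injectivity); this is exactly where Voiculescu's theorem enters, guaranteeing that the relevant ``one more element with prescribed spectral behaviour'' extension dense sets are genuinely dense.

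The two things that must then be checked are (i) \emph{density} of the natural requirement sets — for each $\xi$, each $n$, and each finite word $w$ in the generators, the set of conditions deciding $\pi(a_\xi)$ to within $1/n$ and certifying the relevant relations to within $1/n$ is dense — and (ii) that $\mathbb{E}_A$ is \emph{ccc}. For (i) one uses, repeatedly, that a separable subalgebra already embedded into $\cQ(H)$ can be extended by one more generator: this is a soft consequence of Voiculescu's theorem applied to the separable subalgebra generated by the finitely many elements currently under consideration together with the new one, lifted to $\cB(H)$ and composed with an ampliation so as to kill the compacts. I expect (ii), the countable chain condition, to be the main obstacle: one must argue that any uncountable family of conditions contains two that are compatible. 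The standard route is a $\Delta$-system argument — thin the uncountable family so that the finite index sets $F$ form a $\Delta$-system with root $R$, the tolerances and the finite-dimensional bookkeeping data on $R$ are (by countability of the space of such finite data, up to a suitable discretization) literally equal across the family, and then show that two conditions agreeing on the root are amalgamated by simply taking the union of the data off the root, the overlap on $R$ causing no conflict precisely because Voiculescu's theorem lets the two finite-dimensional pieces be glued after an inner perturbation. Making this amalgamation precise — in particular, choosing the discretization of conditions finely enough that ``equal on the root'' really does yield compatibility, while coarsely enough that there are only countably many root-types — is the technical heart of the proof, and is presumably what \S\ref{sctn4} is devoted to.
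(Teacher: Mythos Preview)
Your high-level strategy matches the paper's: conditions are finite approximations to a lift, density uses Voiculescu, and ccc goes through a $\Delta$-system argument. However, you underestimate two ingredients that constitute the real work, and your sketch of the amalgamation step would not go through as written.

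First, the ``bookkeeping data'' you allude to must include, as part of each condition, a \emph{promise}: an honest faithful essential representation $\Phi_p \colon C^*(F_p) \to \mathcal{B}(H)$ together with a projection $k_p$, certifying that $\psi_p$ can actually be extended (see item~\ref{promise} of Definition~\ref{def:1} and the discussion preceding it). Without this certificate there is no reason a randomly chosen finite $\psi_p$ should be extendible at all---the condition could simply be atomic. You cannot just say ``Voiculescu guarantees the extension dense sets are dense'': Voiculescu compares two given representations; it does not manufacture a representation compatible with an arbitrary finite assignment of operators.

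Second, and more seriously, your ccc amalgamation is too optimistic. After the $\Delta$-system reduction, two conditions $p,q$ agree on the root $Z = F_p \cap F_q$, but each carries its own promise $\Phi_p$, $\Phi_q$, and these need not be compatible on $C^*(F_p \cup F_q)$. To amalgamate (Lemma~\ref{lemma:3}) one needs a single trivial embedding $\Theta \colon C^*(F_p \cup F_q) \to \mathcal{Q}(H)$ whose restrictions are lifted by both $\Phi_p$ and $\Phi_q$ modulo compacts. The paper obtains this coherence by invoking the main result of \cite{farah2017calkin}: since $\bigcup_\alpha F_\alpha$ has cardinality at most $\aleph_1$, there is a \emph{locally trivial} embedding of $C^*\bigl(\bigcup_\alpha F_\alpha\bigr)$ into $\mathcal{Q}(H)$, and Corollary~\ref{voic} then lets every $\Phi_\alpha$ be replaced by one lifting this fixed $\Theta$ without disturbing the promise. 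Only after this synchronization can one pigeonhole on the finite-dimensional data to make the promises genuinely close on $Z$ (Proposition~\ref{propK}). Voiculescu alone does not achieve this: it says any two representations of the \emph{same} separable algebra are equivalent modulo compacts, whereas the issue here is to glue representations of \emph{different} separable algebras along their intersection, coherently across uncountably many conditions at once. That external $\aleph_1$-embedding result is a genuine input you have not accounted for.
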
 
Rephrasing the statement of Theorem \ref{T1}, every \cstar-algebra, regardless of its density character, 
can be embedded into the Calkin algebra in a forcing extension of the universe
obtained without collapsing any cardinals or cofinalities.

The following Corollary (proved as  Corollary~\ref{MA}) is the consistency result needed to prove Theorem \ref{indep} and  follows from the proof of Theorem \ref{T1}.

\begin{crlr}\label{C.MA}
Assume Martin's Axiom, MA. Then, every \cstar-algebra with density character strictly less than $2^{\aleph_0}$ embeds into the Calkin algebra.
\end{crlr}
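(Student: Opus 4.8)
The plan is to read Corollary~\ref{C.MA} off the proof of Theorem~\ref{T1}, exploiting the fact that under $\mathrm{MA}$ a filter sufficiently generic for the ccc poset $\mathbb{E}_A$ already exists in the ground model, as long as only density-character-many dense sets have to be met. Recall from the construction of $\mathbb{E}_A$ that one fixes a dense $(\bbQ+i\bbQ)$-subalgebra $A_0 \subseteq A$ of cardinality $\kappa$, the density character of $A$, and that a condition of $\mathbb{E}_A$ is a finite approximation to a lift $A_0 \to \cB(H)$ of the sought embedding $A \hookrightarrow \cQ(H)$. A generic filter decodes to an embedding precisely because it meets, for each local requirement, a dense set forcing the corresponding fragment of the desired behaviour: for each $a \in A_0$ and $n \in \bbN$, a dense set pinning the approximate image of $a$ down to within $2^{-n}$ modulo the compacts, so that the images are well defined in $\cQ(H)$; for each pair $a, b \in A_0$ and each $n$, dense sets forcing the identities $\Phi(ab) = \Phi(a)\Phi(b)$, $\Phi(a+b) = \Phi(a) + \Phi(b)$, $\Phi(a^*) = \Phi(a)^*$ and $\Phi(\lambda a) = \lambda \Phi(a)$ (for $\lambda \in \bbQ + i\bbQ$) to hold to within $2^{-n}$ in $\cQ(H)$; and for each $a \in A_0$ and $n$, a dense set forcing $\lVert \Phi(a) \rVert_{\cQ(H)} \geq \lVert a \rVert - 2^{-n}$, which guarantees injectivity and is where almost-isometric (Voiculescu-type) representations enter. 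Call $\mathcal D$ the totality of these dense sets; since $|A_0 \times A_0| = |A_0| = \kappa$ and only countably many precision levels and scalars appear, $|\mathcal D| \leq \kappa$.

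Now suppose $\kappa < 2^{\aleph_0}$. Martin's Axiom supplies $\mathrm{MA}_\kappa$: every ccc forcing notion admits a filter meeting any prescribed family of at most $\kappa$ dense subsets. Since $\mathbb{E}_A$ is ccc by Theorem~\ref{T1}, we obtain a filter $G \subseteq \mathbb{E}_A$, with $G \in V$, meeting every member of $\mathcal D$. Because $A$, $H$, $\cB(H)$ and $\cQ(H)$ all lie in $V$ and $G \in V$, decoding $G$ produces a genuine map $\Phi_0 \colon A_0 \to \cQ(H)$ in $V$; the dense sets met by $G$ ensure that the approximate images converge in $\cQ(H)$, that the $*$-homomorphism identities hold exactly, and that $\lVert \Phi_0(a) \rVert_{\cQ(H)} = \lVert a \rVert$ for every $a \in A_0$. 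Hence $\Phi_0$ is an isometric $*$-homomorphism on the dense subalgebra $A_0$, and it extends uniquely and continuously to an embedding $A \hookrightarrow \cQ(H)$.

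The only delicate point is the bookkeeping of the first paragraph: one must verify that the family of dense sets a filter is required to meet in order to decode an embedding is genuinely indexed by $A_0$ and $\bbN$ (hence of size $\leq \kappa$), rather than needing full $V$-genericity over a larger index set; in particular injectivity must be secured by $\leq \kappa$ dense conditions, which is precisely why it is cast as the norm lower bounds witnessed by almost-isometric representations. Once the conditions of $\mathbb{E}_A$ and the proof of Theorem~\ref{T1} are in hand this verification is routine, so the substance of the corollary lies entirely in that theorem; the present argument is merely the familiar passage from ``$\mathbb{E}_A$ is ccc'' to ``$\mathrm{MA}$ realises its generic object in $V$''. (If $2^{\aleph_0} = \aleph_1$ the statement is vacuous, every such algebra being separable and embeddable into $\cQ(H)$ by amplification, so its content lies in models of $\mathrm{MA} + \neg\mathrm{CH}$.)
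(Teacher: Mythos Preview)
Your overall strategy---use that $\mathbb{E}_A$ is ccc, observe that the family $\mathcal D$ of dense sets needed to decode an embedding has size at most the density character $\kappa$ of $A$, and apply $\mathrm{MA}_\kappa$ to obtain a sufficiently generic filter in $V$---is exactly the paper's argument.

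There is, however, one omission and a consequent inaccuracy. The poset $\mathbb{E}_A$ in the paper is defined only for \emph{unital simple} $A$; the first step of the proof is to invoke Proposition~\ref{fvlemma} to embed an arbitrary $A$ into a unital simple \cstar-algebra of the same density character, and then work with that algebra. You skip this reduction. Relatedly, your account of injectivity is not how the paper's $\mathbb{E}_A$ secures it: the order on $\mathbb{E}_A$ (Definition~\ref{def:1}) contains no norm-lower-bound clause, and Proposition~\ref{genemb} only produces a unital bounded $*$-homomorphism, concluding injectivity \emph{because $A$ is simple}. (The norm-lower-bound mechanism you describe is present in the quasidiagonal poset $\mathbb{QD}_A$ of \S\ref{sctn3.2}, but not in $\mathbb{E}_A$.) So while your sketch of the dense sets is a reasonable heuristic, if you are literally using the paper's $\mathbb{E}_A$ you should first reduce to the simple unital case and then get injectivity for free, rather than via additional dense conditions.
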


In the case when the continuum is not greater than $\aleph_2$, the conclusion 
of Corollary~\ref{C.MA} follows from \cite{farah2017calkin}. 
A combination of this corollary with results from \cite{Andrea.PhD} yields the proof of Theorem
\ref{indep}.

\begin{proof}[Proof of Theorem \ref{indep}]
As pointed out above, if the cardinality of the continuum is not greater than $\aleph_2$
then all \cstar-algebras of density character strictly less than $2^{\aleph_0}$ embed into the Calkin algebra. 

Martin's Axiom is relatively consistent with the continuum being equal to $\aleph_3$  (\cite[Theorem V.4.1]{kunen}) and by Corollary~\ref{C.MA} in this model all \cstar-algebras of density character not greater than $\aleph_2$
 embed into the Calkin algebra. 

On the other hand, in a model obtained by adding $\aleph_3$ Cohen reals to a model of CH
we get $2^{\aleph_0} = \aleph_3$  and
the Calkin algebra has no chains of projections of order type $\aleph_2$. 
This was proved in \cite[Section~2.5]{Andrea.PhD} by adapting a well-known argument from Kunen's PhD thesis (\cite[Section 12]{kunenthesis}). 
Therefore in this model the abelian \cstar-algebra $C(\aleph_2+1)$ (where the ordinal $\aleph_2+1$ is 
endowed with the order topology) does not embed into $\cQ(H)$. 
\end{proof}

We remark that Theorem~\ref{T1} was inspired by an analogous
fact holding for partial orders and $\cP(\bbN)/\Fin$: For every partial order $\mathbb{P}$
there is a ccc forcing notion which forces the existence of an embedding of
$\mathbb{P}$ into $\cP(\bbN)/\Fin$. While the proof
of this latter fact is an  elementary exercise, the proof of Theorem~\ref{T1} is fairly sophisticated,
and will take most of this paper.   
At a critical place it makes use of some variations of 
Voiculescu's theorem (\cite[Corollary 1.7.5]{brownozawa}; see Theorem \ref{lemma:2} and Corollary \ref{voic}).

The paper is organized as follows: In section \ref{sctn2} we introduce the notation, some
basic notions and preliminary lemmas needed later in the paper. Section \ref{sctn3} discusses  two special cases of Theorem \ref{T1} as a warmup: 
The case when $A$ is abelian and the case when it is quasidiagonal. 
Only the latter case requires novel ideas. 
 Section \ref{sctn4}
is where the partial order $\mathbb{E}_A$ is defined and where Theorem \ref{T1} is proved. Finally, section \ref{sctn5} is devoted to questions and
concluding remarks.

\section{Preliminaries and notation} \label{sctn2}
\subsection{\cstar-algebras} \label{sctn2.1}
By $H$ we will always denote the complex, separable, infinite-dimensional Hilbert space
$\ell_2(\mathbb{N})$ and by $\mathcal{B}(H)$ the space of linear, bounded operators on $H$. The space of all finite-rank operators on $H$ is denoted $\Bf(H)$. Its norm-closure, denoted $\mathcal{K}(H)$, is the ideal of compact operators. The notation $\mathcal{U}(H)$ is reserved for the group of unitary operators on $H$.
The Calkin algebra $\mathcal{Q}(H)$ is the quotient of $\mathcal{B}(H)$ by the compact operators and for what follows $\pi :\mathcal{B}(H)\rightarrow \mathcal{Q}(H)$ will always denote the quotient map.
For $h\in\Bf(H) $, $h^+$ denotes the orthogonal projection onto its range and $h^-$ is the projection onto the space of 1-eigenvectors of $h$ (i.e. the space of all vectors
$\xi$ such that $h \xi = \xi$). We write ${\Bf(H)}^{\leq 1}_{+}$ for the collection of all finite-rank positive contractions on $H$.
An operator $T \in \mathcal{B}(H)$ is \emph{way above} $S$, $T \gg S$ in symbols,
if $TS = S$. For two projections $P, Q$ we have $P \ll Q$ iff $P \le Q$.
We write $T {\sim}_{\mathcal{K}(H)}S$ and say that $T$ and $S$ \textit{agree modulo the compacts} to indicate that $T-S\in\mathcal{K}(H)$. Similarly, given a \cstar-algebra $A$, two maps ${\varphi}_1:A\rightarrow\mathcal{B}(H)$ and ${\varphi}_2:A\rightarrow\mathcal{B}(H)$ are said to agree modulo the compacts if ${\varphi}_1(a) {\sim}_{\mathcal{K}(H)} {\varphi}_2(a)$ for every $a\in A$. A net of operators $\set{T_i}_{i \in I}$ \emph{strongly converges} to an operator $T$
if for each $\xi \in H$ the net $\set{T_i\xi}_{i \in I}$ converges to $T\xi$. We remark that
to verify the strong convergence of a net it suffices to check it on a dense subset of $H$.

Given two vectors $\xi$ and $\eta$ of a normed vector space and $\epsilon >0$, the notation $\xi \approx_\epsilon \eta$ stands for $\lVert\xi - \eta\rVert<\epsilon$. 
We abbreviate `$F$ is a finite subset of $A$' as $F\Subset A$. 
If   $F$ is a subset of a \cstar-algebra then  $C^\ast(F)$ denotes 
the \cstar-algebra  generated by $F$. If $A$ is unital and $u\in A$ is a unitary element, then $\Ad u$ denotes the automorphism of $A$ which sends $a$ to $uau^*.$ A representation $\Phi :A\rightarrow \mathcal{B}(H)$ is called \textit{essential} if $\Phi(a) \in \mathcal{K}(H)$ implies $\Phi(a) = 0$ for all $a \in A$. Note that all (non-zero) representations of unital, simple, infinite-dimensional \cstar-algebras on $H$ are faithful (i.e. injective) and essential.
A unital, injective $*$-homomorphism $\Theta:A\rightarrow \mathcal{Q}(H)$ is \textit{trivial} if there exists a unital (and necessarily essential) representation $\Phi:A\rightarrow\mathcal{B}(H)$ such that $\pi\circ\Phi = \Theta$ and, in this case, the map $\Phi$ is called a \textit{lift} of $\Theta$. Moreover, $\Theta$ is called \textit{locally trivial} if its restriction to any unital separable \cstar-subalgebra of $A$ is trivial.

Mainly for convenience, in the proof of Theorem \ref{T1} in section \ref{sctn4} we shall exclusively be  concerned with embeddings of unital and simple  \cstar-algebras into the Calkin algebra, as any unital $*$-homomorphism from a unital and simple  \cstar-algebra into $\mathcal{Q}(H)$ is automatically injective. This causes no loss of generality, as a result of the next proposition.

\begin{proposition}[{\cite[Lemma 2.1]{farah2017calkin}}]\label{fvlemma}
Every  \cstar-algebra $A$ embeds into  a unital and simple  \cstar-algebra $B$ of the same density character
as~$A$. \qed
\end{proposition}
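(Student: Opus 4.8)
The plan is to reduce to the case where $A$ is unital and then, for a unital \cstar-algebra $A$, build a simple unital \cstar-algebra $B\supseteq A$ of the same density character as an $\omega$-step increasing union, where at each stage we adjoin partial isometries that ``absorb the unit into the ideal generated by'' the positive elements of the current algebra, so that no proper closed ideal survives in the limit. Throughout $\mathrm{dens}$ denotes the density character, and I use that $\mathrm{dens}(C)\ge\aleph_0$ for every nonzero \cstar-algebra $C$. First I would dispose of the reduction: if $A$ is nonunital, replace it by its unitization $\widetilde A=A\oplus\C1$; a dense subset $D\subseteq A$ with $|D|=\mathrm{dens}(A)$ yields the dense subset $\{d+\lambda1:d\in D,\ \lambda\in\Q+i\Q\}$ of $\widetilde A$, so $\mathrm{dens}(\widetilde A)=\mathrm{dens}(A)$ and $A\hookrightarrow\widetilde A$. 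Hence assume $A$ is unital and set $\kappa:=\mathrm{dens}(A)$.

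Next I would set up the induction. Fix a faithful unital representation of $A$ on a Hilbert space $K_0$ of dimension $\le\kappa$ (e.g.\ a direct sum of GNS representations of a separating family of $\kappa$ states, each GNS space being cyclic over $A$ and hence of dimension $\le\kappa$), put $H_0:=K_0\otimes\ell^2(\kappa)$ and $A_0:=A\otimes1\subseteq\cB(H_0)$, so that $A_0$ acts unitally and $\dim H_0=\kappa$. Given a unital $A_n\subseteq\cB(H_n)$ with $\dim H_n=\kappa$, fix a norm-dense set $S_n$ of positive elements of $A_n$ of size $\le\kappa$ and set $\widehat S_n:=\{(c-q)_+:c\in S_n,\ q\in\Q\cap(0,\infty)\}\setminus\{0\}$, still of size $\le\kappa$; then set $H_{n+1}:=H_n\otimes\ell^2(\kappa)$, identify $A_n$ with $A_n\otimes1\subseteq\cB(H_{n+1})$, and for each $c\in\widehat S_n$ let $p_c:=\chi_{(\|c\|/2,\,\|c\|]}(c)\otimes1$, a nonzero spectral projection of $c$ whose range has dimension $\dim(p_cH_n)\cdot\kappa=\kappa=\dim H_{n+1}$, so that there is an isometry $v_c\in\cB(H_{n+1})$ with $v_c^*v_c=1$ and $v_cv_c^*\le p_c$. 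Finally set $A_{n+1}:=\cst\big(A_n\cup\{v_c:c\in\widehat S_n\}\big)\subseteq\cB(H_{n+1})$, which is again unital (with unit $1_{\cB(H_{n+1})}=1_A\otimes1\in A_n\otimes1$) and of density character $\le\kappa$, and put $B:=\overline{\bigcup_nA_n}$ (the inductive limit along these inclusions). Then $B$ is unital, $A=A_0\hookrightarrow B$, and $\mathrm{dens}(A)\le\mathrm{dens}(B)\le\sum_n\mathrm{dens}(A_n)=\kappa$.

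It remains to check $B$ is simple. The key computation is that for $c\in\widehat S_n$ one has $v_c=p_cv_c$ and $p_c$ commutes with $c$, so
\[
v_c^*cv_c\ =\ v_c^*p_c\,c\,p_cv_c\ \ge\ \tfrac{\|c\|}{2}\,v_c^*p_cv_c\ \ge\ \tfrac{\|c\|}{2}\,v_c^*v_cv_c^*v_c\ =\ \tfrac{\|c\|}{2}\,1,
\]
so $v_c^*cv_c$ is a positive invertible element of the unital \cstar-algebra $A_{n+1}$; by spectral permanence its inverse lies in $A_{n+1}$, so $1$ lies in the closed two-sided ideal of $B$ generated by $c$. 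Now given a nonzero closed ideal $I\subseteq B$, I would first note $I\cap A_n\ne0$ for some $n$ — otherwise $B\to B/I$ is injective, hence isometric, on each $A_n$ and therefore on $\overline{\bigcup_nA_n}=B$, contradicting $I\ne0$ — then pick a positive $a\in I\cap A_n$ with $\|a\|=1$ and $b\in S_n$ with $\|a-b\|<\tfrac14$. Since $\bar a=0$ in $B/I$, one gets $\|\bar b\|<\tfrac14$, hence $\overline{(b-1/4)_+}=(\bar b-1/4)_+=0$, i.e.\ $(b-1/4)_+\in I$; and $(b-1/4)_+\ne0$ because $\|b\|>\tfrac34$, so $c:=(b-1/4)_+\in\widehat S_n$ and the computation above gives $1\in I$. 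Thus $I=B$, $B$ is simple, and $A$ embeds into the simple unital \cstar-algebra $B$ with $\mathrm{dens}(B)=\mathrm{dens}(A)$, as required.

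The step I expect to be the main obstacle is forcing simplicity without enlarging the density character: one cannot adjoin a $v_c$ for every positive element of $A_n$, since there may be $2^{\kappa}$ of them, and the point of the quotient argument above is precisely that it suffices to treat the norm-dense family $\widehat S_n$ of size $\kappa$. A secondary technical point is analytic: the isometries $v_c$ exist only because amplifying by $\ell^2(\kappa)$ at each stage forces $\dim(p_cH_{n+1})=\dim H_{n+1}$ — without it a top spectral subspace of $c$ could be finite-dimensional, leaving no room for $v_c$. Everything else (existence of the small faithful representation, the cardinal arithmetic, and the elementary facts about $(\,\cdot\,)_+$ and continuous functional calculus) is routine.
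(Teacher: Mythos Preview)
Your proof is correct. The paper itself does not prove this proposition: it is quoted from \cite{farah2017calkin} and marked with a \qed, so there is no in-paper argument to compare against. Your construction---iteratively adjoining, for each element $c$ of a $\kappa$-sized dense family of positive elements, an isometry $v_c$ whose range projection lies under a top spectral projection of $c$---is a clean and self-contained route. The two points you flag as potential obstacles are handled correctly: working only with the dense family $\widehat S_n$ keeps the density character at $\kappa$, and the amplification by $\ell^2(\kappa)$ guarantees the spectral subspaces have full dimension so that the required isometries exist. The simplicity argument (passing to $B/I$, using that injective $*$-homomorphisms are isometric on each $A_n$, then pushing $(b-1/4)_+$ into $I$ via functional calculus) is standard and sound. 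The only place one might want an extra line is the existence of a faithful representation of $A$ on a Hilbert space of dimension $\le\kappa$, but your parenthetical sketch (a direct sum of GNS representations over a separating $\kappa$-family of states, each cyclic space having dimension $\le\kappa$) is enough.
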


The following standard consequence of Voiculescu's theorem 
 will be invoked frequently throughout the rest of this manuscript.

\begin{theorem}[{\cite[Corollary 1.7.5]{brownozawa}}] \label{lemma:2} 
Let $A$ be a unital, separable \cstar-algebra and let  $\Phi :A\rightarrow \mathcal{B}(H)$ and $\Psi:A\rightarrow \mathcal{B}(H)$ be two faithful, essential, unital representations.
Then, for every $F \Subset A$ and $\epsilon >0$
there exists a unitary $u\in\mathcal{U}(H)$ such that:
\begin{enumerate}
\item The maps $\Ad u\circ\Phi$ and $\Psi$ agree modulo the compacts. \label{item1}
\item $\lVert \Ad u\circ\Phi(a) - \Psi(a)\rVert <\epsilon$ for all $a\in F$.\qed
\end{enumerate}
\end{theorem}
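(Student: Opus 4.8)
The plan is to derive this from Voiculescu's theorem in the form asserting \emph{approximate unitary equivalence modulo the compacts}. Concretely, it suffices to produce a single sequence of unitaries $(u_n)_n$ in $\mathcal{U}(H)$ with the property that, for every $a\in A$, one has $\Ad u_n\circ\Phi(a)-\Psi(a)\in\mathcal{K}(H)$ for all $n$, and $\lVert \Ad u_n\circ\Phi(a)-\Psi(a)\rVert\to 0$ as $n\to\infty$. Indeed, granting this, given $F\Subset A$ and $\epsilon>0$ we pick $n$ large enough that $\lVert \Ad u_n\circ\Phi(a)-\Psi(a)\rVert<\epsilon$ holds simultaneously for the finitely many $a\in F$ (possible since a finite maximum of null sequences is null), and then $u:=u_n$ witnesses both \eqref{item1} and \eqref{item2}.

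To obtain such a sequence I would verify the hypotheses of Voiculescu's theorem for two representations of a separable \cstar-algebra (see \cite[Theorem 1.7.3]{brownozawa} and the surrounding discussion). The relevant criterion is that $\Phi$ and $\Psi$ have the same kernel and the same rank function $a\mapsto\mathrm{rank}(\Phi(a))=\mathrm{rank}(\Psi(a))$, with values in $\{0,1,2,\dots,\infty\}$. Both conditions are immediate here: faithfulness of $\Phi$ and $\Psi$ gives $\ker\Phi=\ker\Psi=\{0\}$; and for $a\in A\setminus\{0\}$ essentiality forces $\Phi(a)\notin\mathcal{K}(H)$ and $\Psi(a)\notin\mathcal{K}(H)$, so $\Phi(a)$ and $\Psi(a)$, being non-compact, have infinite rank, while for $a=0$ both are zero. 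Hence Voiculescu's theorem applies and yields the desired $(u_n)_n$.

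The one genuinely nontrivial ingredient, which I would cite rather than reprove, is precisely the assertion internal to Voiculescu's theorem that a \emph{single} unitary can be made to implement equality modulo $\mathcal{K}(H)$ \emph{for all} $a\in A$ at once (and not merely on a prescribed finite set): this is established through Arveson's quasicentral approximate unit technique and an inductive block-diagonal construction, and it is at that stage that separability of $H$ (equivalently, of $A$ and of each cyclic subrepresentation) enters. Everything else is the bookkeeping indicated above, so I expect no further obstacle.
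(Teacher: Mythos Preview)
The paper does not supply its own proof of this statement: it is quoted verbatim as \cite[Corollary~1.7.5]{brownozawa} with a \qedsymbol, followed by pointers to \cite{arveson} and \cite[Section~3]{higsonroe} for detailed arguments. So there is nothing to compare against beyond the literature you are already invoking.

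Your sketch is the standard derivation and is correct. Faithful plus essential forces every nonzero $a$ to map to a non-compact (hence infinite-rank) operator under both $\Phi$ and $\Psi$, so the rank/kernel criterion for approximate unitary equivalence is satisfied, and the strong form of Voiculescu's theorem then yields a sequence $(u_n)$ with $u_n\Phi(a)u_n^*-\Psi(a)\in\mathcal{K}(H)$ for all $a$ and with norm convergence on each fixed $a$; picking $n$ large enough for the finite set $F$ finishes it. One cosmetic remark: you write \texttt{\textbackslash eqref\{item2\}}, but in the paper only the first item carries a label, so that reference would be undefined; refer to ``item (2)'' in prose instead.
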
 
See also \cite{arveson} and \cite[Section 3]{higsonroe} for a detailed proof of the
theorem above.
We will also be using the next variant, which allows  to find a unitary as in item
\ref{item1} of the previous theorem which
in addition is equal to the identity on a given finite-dimensional space:

\begin{corollary}\label{voic}
Let $A$ be a unital, separable \cstar-algebra and consider two faithful, essential, unital representations $\Phi :A\rightarrow \mathcal{B}(H)$ and $\Psi:A\rightarrow \mathcal{B}(H)$.
Then,  for every $F\Subset A $ and every finite-dimensional subspace $K\subseteq H$ there exists a unitary $w\in\mathcal{U}(H)$ such that:
\begin{enumerate}
\item The maps $\Ad w\circ\Phi$ and $\Psi$ agree modulo the compacts.
\item $\Ad {w\circ\Phi(a)}(\xi) = {\Phi(a)}(\xi)$ for every $a\in F$ and $\xi\in K$.
\end{enumerate}
In particular, the set 
\[
Z=\{\Ad w\circ \Phi: w\in \mathcal{U}(H),\ \Ad w \circ \Phi(a) \sim_{K(H)} \Psi(a) \ \text{for all}\  a\in A\}
\]
has $\Phi$ in its closure with respect to strong convergence.
\end{corollary}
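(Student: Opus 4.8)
The plan is to deduce Corollary \ref{voic} directly from Theorem \ref{lemma:2} by a perturbation-of-unitary argument. Start with $F \Subset A$ and a finite-dimensional $K \subseteq H$; fix a small $\delta > 0$ to be specified later (depending on $\dim K$ and $\max_{a \in F}\|a\|$). Apply Theorem \ref{lemma:2} to $\Phi$, $\Psi$, the finite set $F$, and $\delta$ to obtain a unitary $u \in \mathcal{U}(H)$ with $\Ad u \circ \Phi \sim_{\mathcal{K}(H)} \Psi$ and $\|\Ad u \circ \Phi(a) - \Psi(a)\| < \delta$ for all $a \in F$. Now the idea is to correct $u$ on the finite-dimensional space $\Phi(F)K := \Span\{\Phi(a)\xi : a \in F \cup \{1\},\ \xi \in K\}$ so that the second conclusion holds exactly, at the cost of only a compact perturbation (which preserves the first conclusion) and without destroying essentiality or unitarity. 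Concretely, I would find a unitary $v \in \mathcal{U}(H)$ which is a finite-rank perturbation of the identity, such that $w := vu$ satisfies $w\Phi(a)\xi = \Phi(a)\xi$ for all $a \in F$ and $\xi \in K$; since $v - 1$ is finite rank, $\Ad w \circ \Phi$ still agrees modulo the compacts with $\Ad u \circ \Phi$, hence with $\Psi$.

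The construction of $v$ is the technical heart. Consider the finite-dimensional subspace $V := \Phi(F)K$ (including $\Phi(1)K = K$ itself). On $V$, the map $\Ad u \circ \Phi$ is close to $\Ad(\mathrm{id}) \circ \Phi$ on the generating vectors, in the sense that $u\Phi(a)\xi \approx_{\delta'} \Phi(a)\xi$ for $a \in F$, $\xi \in K$ in a suitable normalized basis of $K$ (here I must be slightly careful: Theorem \ref{lemma:2}(2) controls $\|\Ad u\circ\Phi(a) - \Psi(a)\|$ and $\|\Phi(a) - \Phi(a)\| = 0$, but what I actually want is that $u\Phi(a)\xi$ is close to $\Phi(a)\xi$, which does not follow from closeness of $\Ad u\circ\Phi$ to $\Phi$ — it follows only if I instead arrange $\Psi = \Phi$). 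This forces a reorganization: I should apply Theorem \ref{lemma:2} with the \emph{target} representation being $\Phi$ itself, i.e. take $\Psi := \Phi$ in the invocation — but the statement of the Corollary has an external $\Psi$. The clean fix is to apply Theorem \ref{lemma:2} twice, or rather to note that it suffices to prove the Corollary in the special case $\Psi = \Phi$ and then compose: first produce $u_0$ with $\Ad u_0 \circ \Phi \sim_{\mathcal{K}(H)} \Psi$ and norm-close on $F$ (plain Theorem \ref{lemma:2}); then produce $u_1$ close to the identity on $F, K$ with $\Ad u_1 \circ \Phi \sim_{\mathcal{K}(H)} \Phi$ and $\Ad u_1 \circ \Phi(a)\xi = \Phi(a)\xi$ on $K$; the composite $w = u_1 u_0$... — but this still does not pin down $\Ad w \circ \Phi$ on $K$ unless $u_0$ already acts as the identity there. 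So the genuinely right approach is: apply Theorem \ref{lemma:2} to the pair $(\Phi, \Psi)$ with $\epsilon = \delta$ to get $u$ with $\Ad u \circ \Phi \sim_{\mathcal{K}(H)} \Psi$; then separately use the \emph{proof technique} of Voiculescu's theorem (rather than its statement as boxed) — but since we may only assume what is stated, the honest move is to apply Theorem \ref{lemma:2} to $(\Phi, \Phi)$, obtaining $u$ with $u$ acting nearly trivially on the relevant finite-dimensional vectors, and then show that the \emph{existence} of one $u$ with $\Ad u \circ \Phi \sim_{\mathcal{K}(H)} \Psi$ together with density of approximately-trivial-on-$K$ unitaries in that coset gives the result. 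Thus the key steps are: (i) use Theorem \ref{lemma:2} to fix the modulo-compacts and norm constraints; (ii) use a Gram–Schmidt / polar-decomposition argument to build a finite-rank-perturbation unitary $v$ that sends the (nearly matching) finite frame $\{u\Phi(a)\xi\}$ exactly back to $\{\Phi(a)\xi\}$, which works as long as the frames are $\delta'$-close with $\delta'$ small relative to the geometry of $V$; (iii) set $w = vu$ and verify both conclusions; (iv) for the "in particular" clause, observe that the finite set $F$ and the finite-dimensional $K$ were arbitrary, so the $Z$-elements $\Ad w \circ \Phi$ can be made to agree with $\Phi$ on an arbitrary prescribed finite frame, which is exactly the definition of a strong-convergence basic neighborhood of $\Phi$; hence $\Phi \in \overline{Z}^{\,\mathrm{strong}}$.

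The main obstacle I anticipate is step (ii): making the correcting unitary $v$ finite-rank and genuinely unitary simultaneously. If $V$ has dimension $d$, I have two $d$-dimensional (or smaller, after accounting for linear dependencies — another subtlety, since the $\Phi(a)\xi$ need not be linearly independent) subspaces $V = \Phi(F)K$ and $uV$, and two frames in them that are $\delta'$-close vector by vector; I want $v$ unitary, equal to the identity off $V + uV$, with $v(u\Phi(a)\xi) = \Phi(a)\xi$. Provided $\delta'$ is small enough that the map $u\Phi(a)\xi \mapsto \Phi(a)\xi$ extends to a linear isomorphism $V + uV \to V + uV$ of norm close to $1$ (true by a standard perturbation estimate once $\delta'$ is below a threshold determined by the smallest singular value of the Gram matrix of a chosen maximal linearly independent subset of the $\Phi(a)\xi$), its unitary part in the polar decomposition, extended by the identity on $(V+uV)^\perp$, is the desired $v$, and $\|v - 1\|$ is $O(\delta')$ so it is a finite-rank perturbation of $1$. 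One must check that $v(u\Phi(a)\xi) = \Phi(a)\xi$ \emph{exactly} (not just approximately) — this is where taking the polar-decomposition unitary of the \emph{exact} intertwiner, rather than the intertwiner itself, matters, and one verifies it by noting the intertwiner already maps the source frame exactly onto the target frame, and passing to the unitary part preserves this on the subspace spanned by the target frame provided we set things up on the common space $V + uV$ with a compatible decomposition; the cleanest way is to first replace $u$ by $u$ composed with the identity-on-complement extension of a small unitary rotation within $V + uV$ realizing the frame matching. I would handle the linear-dependence issue by passing to a basis of $V$ refined from the frame $\{\Phi(a)\xi\}$ and noting that $u$ being a $\delta'$-approximate intertwiner on generators is enough to control it on this basis. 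Finally, choosing $\delta$ (hence $\delta'$) small enough at the very start makes everything go through; this is the only place quantitative care is needed.
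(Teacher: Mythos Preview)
Your core idea---obtain a unitary $u$ with $\Ad u\circ\Phi\sim_{\mathcal{K}(H)}\Psi$ from Theorem~\ref{lemma:2}, then correct it by a finite-rank perturbation so that the result fixes the finite-dimensional space $V=\Span\{\Phi(a)\xi : a\in F\cup\{1\},\ \xi\in K\}$---is exactly right, and it is what the paper does. But you are making the argument far harder than it needs to be.

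The entire detour through the $\epsilon$-closeness in Theorem~\ref{lemma:2}(2), the discussion of applying the theorem with $\Psi=\Phi$, the Gram--Schmidt/polar-decomposition construction, and the quantitative control of $\delta'$ relative to the Gram matrix of the frame are all unnecessary. The reason is that the following elementary fact holds with no closeness hypothesis whatsoever: given any unitary $u\in\mathcal{U}(H)$ and any finite-rank projection $Q$, there exists a unitary $w$ which is a finite-rank perturbation of $u$ and satisfies $wQ=Qw=Q$ (i.e.\ $w$ is the identity on $Q[H]$). To see this, take an orthonormal basis $\eta_1,\dots,\eta_n$ of $Q[H]$, let $S$ be the finite-dimensional span of $\{\eta_i,u\eta_i : i\le n\}$, choose any unitary $\sigma$ of $S$ sending $u\eta_i$ to $\eta_i$ (possible since both are orthonormal families of size $n$ in $S$), and set $w=(\sigma\oplus 1_{S^\perp})u$. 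Then $w\eta_i=\eta_i$, so $wQ=Q$; since $w$ is unitary this gives $Qw=Q$ as well; and $w-u=(\sigma-1_S\oplus 0)u$ has finite rank.

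Once you have this, the proof is three lines: apply Theorem~\ref{lemma:2}(1) to get $u$; let $Q$ project onto $V$; take $w$ as above. Then for $\xi\in K$ one has $w^*\xi=\xi$ and $\Phi(a)\xi\in Q[H]$, so $(\Ad w\circ\Phi)(a)\xi=w\Phi(a)\xi=\Phi(a)\xi$; and $\Ad w\circ\Phi\sim_{\mathcal{K}(H)}\Ad u\circ\Phi\sim_{\mathcal{K}(H)}\Psi$. The ``in particular'' clause follows exactly as you say. You never need item~(2) of Theorem~\ref{lemma:2} at all, nor any smallness of the correcting unitary.
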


\begin{proof}
Let $F\Subset A$, $K\subseteq H$ be a finite-dimensional subspace  and we let $P\in\mathcal{B}(H)$ be the orthogonal projection onto $K$.
By Theorem \ref{lemma:2},  we can find a unitary $v\in\mathcal{U}(H)$
such that $\Ad v\circ \Phi$ and $\Psi$ agree modulo the compacts.
Let $Q$ be the finite-rank projection onto the subspace spanned by the set 
$K\cup\{ \Phi(a)K : a \in F\}$ and let $w\in\mathcal{U}(H)$ be a finite-rank modification of $v$ such that $wQ=Qw=Q$. 
Then $\Ad w \circ \Phi$ and $\Ad v \circ \Phi$ agree modulo the compacts  and 
$(\Ad w \circ \Phi)(a) P = \Phi(a) P $  for all $a\in F$. 
\end{proof}
 
The following lemma will be invoked  for proving  a density result (Proposition \ref{prop:1}).

\begin{lemma} \label{lemma:1}
Let $T\in \mathcal{B}(H)$ be a finite-rank projection. For every $\epsilon>0$ there exists $\delta >0$ such that if $S\in \mathcal{B}(H)$ and $\lVert T-S\rVert <\delta$, then there is a unitary $u\in\mathcal{U}(H)$ satisfying the following:
\begin{enumerate}
\item $uT[H]\subseteq S[H]$, namely the image space of $uT$ is contained in the image space
of $S$,
\item $\lVert (u-\Id_{H})T\rVert<\epsilon$,
\item $u-\Id_{H}\in\Bf(H)$,
\item \label{i4} for every orthogonal projection $P$ onto a subspace of $T[H]$ such that $SP = P$, we have that
$uP = P$ holds.
\end{enumerate}
\end{lemma}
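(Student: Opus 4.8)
The plan is to exploit the fact that $T$ is a finite-rank projection, say of rank $n$, so that $T[H]$ is $n$-dimensional, and to produce the desired unitary $u$ as a finite-rank perturbation of the identity built out of a partial isometry. First I would fix $\epsilon>0$ and choose $\delta$ small (to be calibrated below; $\delta<\epsilon/10$ and $\delta<1/4$ will certainly be enough). Given $S$ with $\|T-S\|<\delta$, the operator $T$ maps $T[H]$ isometrically to itself, while $S$ restricted to $T[H]$ is a small perturbation of this, hence bounded below; applying the polar decomposition to $S|_{T[H]}$ (or rather taking the partial isometry $v_0$ in the polar decomposition of $ST$) gives a partial isometry $v_0$ with initial space $T[H]$ and final space a subspace $L\subseteq \overline{S[H]}$ of the same dimension $n$, with $\|(v_0-\Id)T\|=\|v_0-T\|$ small (controlled by $\delta$ via the standard estimate for how close the polar isometry is to the original operator when the operator is close to a partial isometry). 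One must be slightly careful that $L\subseteq S[H]$ and not merely its closure, but since $L$ is finite-dimensional and $S[H]\supseteq S[T[H]]$, which is $n$-dimensional (as $S|_{T[H]}$ is injective for $\delta$ small), and $v_0[T[H]]$ is the closure of the range of $S|_{T[H]}$ which already equals the $n$-dimensional space $S[T[H]]$, we get $L = S[T[H]]\subseteq S[H]$ honestly.

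Next I would extend $v_0$ to a unitary. Both $T[H]$ and $L$ are $n$-dimensional, so their orthogonal complements have the same (infinite) dimension; thus there is a unitary $u\in\mathcal{U}(H)$ with $u|_{T[H]} = v_0|_{T[H]}$ and $u$ equal to the identity on $(T[H]+L)^\perp$ — concretely, pick any unitary $u_1$ from $T[H]^\perp$ onto $L^\perp$ that is the identity on $(T[H]+L)^\perp$, and set $u = v_0 \oplus u_1$. Then (1) holds because $uT[H] = v_0[T[H]] = S[T[H]] \subseteq S[H]$; (2) holds because $(u-\Id)T = (v_0 - T)$ on $T[H]$ and both vanish on $T[H]^\perp$, so $\|(u-\Id)T\| = \|v_0 - T\|$, which we arranged to be $<\epsilon$; and (3) holds because $u-\Id$ is supported on the finite-dimensional space $T[H]+L$, hence has finite rank.

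For item (\ref{i4}): let $P$ be an orthogonal projection onto a subspace $M\subseteq T[H]$ with $SP=P$, i.e.\ $S$ acts as the identity on $M$. I claim $u$ also acts as the identity on $M$, i.e.\ $uP=P$. Indeed, on $T[H]$ the unitary $u$ agrees with $v_0$, the partial isometry in the polar decomposition $ST = v_0|ST|$. For $\xi\in M$ we have $S\xi = \xi$; I want to conclude $v_0\xi = \xi$. This is where one argues that the polar isometry fixes any vector that $S$ already fixes isometrically: if $S\xi=\xi$ with $\|\xi\|=1$ then $\|ST\xi\| = \|S\xi\| = 1 = \|\xi\|$, so $|ST|\xi$ has norm $1$, and since $|ST| = (T S^* S T)^{1/2}$ is a contraction on $T[H]$ with $\||ST|\xi\| = 1 = \|\xi\|$ and $\xi\in T[H]$, $\xi$ is in the top eigenspace, $|ST|\xi = \xi$; hence $v_0\xi = v_0|ST|\xi = ST\xi = S\xi = \xi$. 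Therefore $u\xi=\xi$ for all $\xi\in M$, which is exactly $uP=P$.

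The main obstacle is the quantitative control in the first step: getting a clean bound $\|v_0 - T\| < \epsilon$ (and knowing $S|_{T[H]}$ is injective) purely in terms of a single $\delta$ depending only on $\epsilon$ and (implicitly) on $\mathrm{rank}(T)$. The clean way to handle this is to work entirely within the finite-dimensional space $T[H]$: the operator $A := T S T|_{T[H]}$ (equivalently $STS$ compressed, but $S|_{T[H]}$ is what matters) satisfies $\|A - \Id_{T[H]}\| \le \|T\|\,\|S-T\|\,\|T\| + \cdots < 3\delta$, so $A$ is invertible for $\delta<1/3$, and the polar decomposition $S|_{T[H]} = v_0 (S^*S|_{T[H]})^{1/2}$ gives $\|v_0 - S|_{T[H]}\| = \|(S^*S|_{T[H]})^{1/2} - \Id\|\cdot(\text{something bounded})$, all of which is $O(\delta)$ by continuity of the square root away from $0$. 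Then $\|v_0 - T\| \le \|v_0 - S|_{T[H]}\| + \|S|_{T[H]} - T|_{T[H]}\| = O(\delta)$, and one picks $\delta$ so that this is $<\epsilon$. Everything else is routine linear algebra and operator-theoretic bookkeeping.
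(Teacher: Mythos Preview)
Your approach via the polar decomposition of $ST$ is clean and handles items (1)--(3) correctly; it is a genuinely different route from the paper, which builds $u$ by applying Gram--Schmidt to $\{S\xi_1,\dots,S\xi_n\}$ for a well-chosen orthonormal basis $\{\xi_1,\dots,\xi_n\}$ of $T[H]$.

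However, your argument for item~(\ref{i4}) has a real gap. You assert that $|ST|$ is a contraction, so that $\||ST|\xi\|=\|\xi\|$ forces $|ST|\xi=\xi$. But nothing in the hypotheses gives $\|S\|\le 1$; we only know $\|S\|\le 1+\delta$, so $\||ST|\|$ may exceed $1$, and then $\||ST|\xi\|=\|\xi\|$ does \emph{not} imply $\xi$ is a $1$-eigenvector. Concretely, take $T$ the identity on a $2$-dimensional subspace with orthonormal basis $e_1,e_2$ and let $S$ be $e_1\mapsto e_1$, $e_2\mapsto e_2+\alpha e_1$ with $\alpha$ small. Then $Se_1=e_1$, yet $S^*Se_1=e_1+\bar\alpha e_2$, so $|ST|^2 e_1\neq e_1$ and the polar isometry $v_0$ satisfies $v_0 e_1\approx e_1-\tfrac{\bar\alpha}{2}e_2\neq e_1$. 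Your $u$ therefore does not fix $e_1$ and item~(\ref{i4}) fails for $P$ the projection onto $\mathbb{C}e_1$.

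The fix is to build the $1$-eigenspace into the construction from the start. The paper does this by first choosing an orthonormal basis $\{\xi_1,\dots,\xi_k\}$ of $\{\xi\in T[H]:S\xi=\xi\}$, extending it to an orthonormal basis $\{\xi_1,\dots,\xi_n\}$ of $T[H]$, and then applying Gram--Schmidt to $\{S\xi_1,\dots,S\xi_n\}$; since $S\xi_i=\xi_i$ for $i\le k$, the Gram--Schmidt outputs $\eta_i=\xi_i$ for $i\le k$, and the resulting unitary $\xi_i\mapsto\eta_i$ fixes the required subspace. You could equally well salvage your polar-decomposition argument by first splitting $T[H]=M\oplus M^\perp$ where $M=\{\xi\in T[H]:S\xi=\xi\}$, setting $u=\Id$ on $M$, and running your polar-decomposition construction only on $(1-P_M)ST(1-P_M)$ restricted to $M^\perp\cap T[H]$; but this amounts to the same manoeuvre.
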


\begin{proof}
Let $\set{\xi_1,\dots, \xi_k}$ be an orthonormal basis of the space
of all eigenvectors of $S$ whose eigenvalue is 1 and which are moreover contained in $T[H]$.
Fix $\{\xi_1,\dots,\xi_n\}$ an orthonormal basis of $T[H]$
extending $\set{\xi_1,\dots, \xi_k}$. If $\lVert T-S\rVert < \delta < 1$,
the set $\{S\xi_1,\dots,S\xi_n\}$ (which linearly spans $ST[H]$) is linearly
independent. In fact, if $\xi \in T[H]$ has norm one and is such that $S\xi = 0$, then
$\lVert T\xi \rVert = \lVert \xi \rVert < \delta$, which is a contradiction.
Applying the Gram-Schmidt process to $\{S\xi_1,\dots,S\xi_n\}$
we obtain an orthonormal basis $\{\eta_1,\dots,\eta_n \}$ for $ST[H]$,
which for sufficiently small choice of $\delta$ (which depends on the dimension of $T[H]$) is such that
\[
\lVert \xi_i - \eta_i\rVert <\frac{\epsilon}{n} , \ i=1,\dots,n.
\]
Denote by $V$ the finite-dimensional space spanned by $T[H]$ and $ ST[H]$.
Let $\{\xi_1,\ldots,\xi_m\}$ be an orthonormal basis of $V$
that extends $\{\xi_1,\ldots,\xi_n\}$ and, similarly, $\{\eta_1,\ldots,\eta_m\}$
an orthonormal basis of $V$ extending $\{\eta_1,\ldots,\eta_n\}.$
This naturally defines a unitary $w:V\rightarrow V$ by sending the
vector $\xi_i$ to $\eta_i$ for every $i=1,\ldots,m.$ Finally, define
$u\in\mathcal{U}(H)$ to be equal to $w$ on $V$ and equal to
the identity on the orthogonal complement of $V$. The unitary $u$ satisfies the desired properties,
in particular item \ref{i4} of the statement holds since $\eta_i = \xi_i$ for $i \le k$ by our initial
choice of $\set{\xi_1,\dots, \xi_k}$, orthonormal basis of the space of all eigenvectors of $S$
of eigenvalue 1 in $T[H]$.
\end{proof}

\subsection{Set Theory and Forcing} \label{sctn2.2}\label{S.Why}
As stated in the introduction, Theorem \ref{T1} is an application of the method of forcing.
For a standard introduction to this topic see \cite{kunen}; see also 
\cite{DaWo:Introduction} and \cite{weaver2014forcing}.

We start with some technical definitions. 
Two elements $p,q$ of a partial order (or poset) $(\mathbb{P},\le)$ are \emph{compatible} if there exists
$s\in P$ such that $s\le p$ and $s\le q$. Otherwise, $p$ and $q$ are
\emph{incompatible}. A subset $A \subseteq \mathbb{P}$ is an \emph{antichain}
if its elements are pairwise incompatible. 
A subset $D \subseteq \mathbb{P}$
is \emph{dense} if for every $p \in \mathbb{P}$ there is $q \in D$ such that $q \le p$.
A subset $D$ of $\mathbb{P}$ is \emph{open} if it is closed downwards,
i.e. $p \in D$ and $q \le p$ implies $q \in D$.
A non-empty subset $G$ of $\mathbb{P}$
is a \emph{filter} if $q \in G$ and $q \le p$ implies
$p \in G$, and if for any $p, q \in G$ there exists $r \in G$ such that $r \le p$, $r \le q$.
Given a family $\mathcal{D}$ of dense open subsets of $\mathbb{P}$, a filter $G$ is $\mathcal{D}$\emph{-generic}
if it has non-empty intersection with each element of $\mathcal{D}$.

A  \emph{forcing notion} (or \emph{forcing}) is a partially ordered set (poset), whose elements
are called \emph{conditions}.
Naively, the forcing method produces, starting from a poset $\mathbb{P}$,
an extension of von Neumann's universe $V$. The extension is obtained by adding to $V$ a
filter $G$ of $\mathbb{P}$ which intersects \emph{all} dense open subsets of $\mathbb{P}$.
This generic extension, usually denoted by $V[G]$,
is a model of ZFC, and its theory depends  on combinatorial properties of 
  $\mathbb{P}$ and (to some extent) on the choice of $G$.
A  condition $p\in \bbP$  \emph{forces} a sentence $\varphi$ in the language of ZFC 
if $\varphi$ is true in $V[G]$ whenever $G$ is a generic filter containing $p$. If $\phi$ is true
in every generic extension $V[G]$, we say that $\bbP$ \emph{forces} $\phi$.

Unless $\bbP$ is trivial, no   filter intersects every dense open subset
of $\bbP$.
For this reason, 
the forcing method is combined with a L\"owenheim--Skolem reflection argument 
and applied to countable models of ZFC. 
If $M$ is a countable model of ZFC and $\bbP\in M$, then the existence of an $M$-generic filter $G$ (i.e. intersecting every open dense subset of $\mathbb{P}$ in $M$) of $\bbP$
is guaranteed by the Baire Category Theorem (\cite[Lemma III.3.14]{kunen})\footnote{For metamathematical reasons related to 
G\"odel's Incompleteness Theorem, 
one usually considers models of a large enough finite fragment of ZFC. By other 
metamathematical considerations, for all practical purposes this issue can be safely ignored;
see \cite[Section IV.5.1]{kunen}.}.


An obvious method for embedding a given \cstar-algebra $A$ into the Calkin algebra is to 
generically add a bijection between a dense subset of $A$ and $\aleph_0$ (i.e. to `collapse' the density 
character of $A$ to $\aleph_0$). The completion of $A$ in the forcing extension (routinely identified with $A$) 
 is then separable
and therefore embeds into the Calkin algebra of the extension.  
However, if the density character of $A$ is collapsed, then 
this  results in a \cstar-algebra that has little to do with the original algebra $A$. We shall give two examples. 

Fix an uncountable cardinal $\kappa$. 
If $A$ is $\cst_r(F_{\kappa})$, the reduced group algebra of the free group 
with $\kappa$ generators, then collapsing $\kappa$ to $\aleph_0$ 
makes $A$  isomorphic to $\cst_r(F_{\aleph_0})$ (better known as $\cst_r(F_\infty)$). It 
is  not difficult to  prove that, if a cardinal $\kappa$ is not collapsed,
then the completion of $\cst_r(F_{\kappa})$ in the extension is isomorphic to $\cst_r(F_\kappa)$ as computed
in the extension. This is not automatic as, for example, the completion of 
the ground model Calkin algebra in a forcing extension will rarely be isomorphic to the Calkin algebra in the extension.

 A more drastic example is provided  by the $2^\kappa$ nonisomorphic 
\cstar-algebras each of which is an inductive limit of full matrix algebras of the form $M_{2^n}(\bbC)$ for $n\in \bbN$ constructed in \cite[Theorem~1.2]{FaKa:NonseparableII}. After collapsing $\kappa$ to $\aleph_0$, 
all of these \cstar-algebras become isomorphic to the CAR algebra. This is because
it can be proved that the $K$-groups of $A$ are invariant under forcing and, by Glimm's classification result,
unital and separable inductive limits of full matrix algebras are isomorphic   (e.g. \cite{blackadar}). 
A similar effect can be produced even with a forcing that  
preserves cardinals if it collapses a  stationary set  (\cite[Proposition~6.6]{FaKa:NonseparableII}).

Instead of `collapsing' the cardinality of $A$, our approach is to `inflate' the Calkin algebra. 
More precisely, we prove that Martin's Axiom implies that the Calkin algebra has already been `inflated'.

\emph{Forcing axioms} are far-reaching extensions of the Baire Category Theorem that
enable one to apply forcing without worrying about metamathematical issues. 
  Corollary \ref{C.MA} will be proved by applying   Martin's axiom,  
   the simplest (and most popular) 
  forcing axiom. 

\begin{definition} \label{cccMA} A  poset $(\mathbb{P},\le)$ satisfies the
\emph{countable chain condition}
(or \emph{ccc}) if every antichain in $\bbP$ is at most countable.

\emph{Martin's Axiom}, \emph{MA}, asserts that for every ccc poset $\bbP$ and every family~$\cD$ of fewer than $2^{\aleph_0}$ 
dense open subsets of $\bbP$, there exists a $\cD$-generic filter in~$\bbP$.
\end{definition}
It is relatively consistent with ZFC that Martin's axiom holds and the continuum
is larger than any prescribed cardinal $\kappa$ (\cite[Theorem V.4.1]{kunen}). 
The countable chain condition is  the single most flexible property of forcing notions that enables one to iterate forcing and obtain forcing
 extensions with various prescribed properties  (see e.g. \cite[Theorem~IV.3.4]{kunen}). 
Our posets will have the following strong form of ccc.  
A poset $(\mathbb{P},\leq)$ has \textit{property~K} if every uncountable subset of $\mathbb{P}$
contains a further uncountable subset in which any two elements are compatible.

The proof strategy in section \ref{sctn4} is as follows.
Given a \cstar-algebra $A$,
we start by defining a forcing notion $\mathbb{E}_A$ (Definition \ref{def:1})
whose generic filters (if any) allow to build an embedding of $A$ into $\cQ(H)$ (Proposition \ref{genemb}).
We then proceed to show
that $\mathbb{E}_A$ is ccc (Proposition \ref{propK}), and that the existence of sufficiently generic filters inducing the existence of an embedding of $A$ into $\cQ(H)$ is guaranteed in models of ZFC + MA (Corollary \ref{MA}).

The following lemma will be used when proving that a given forcing 
notion is ccc. A family $\mathcal{C}$ of sets forms a \emph{$\Delta$-system} with \emph{root} $R$
if $X\cap Y=R$ for any two distinct sets $X$ and $Y$ in $\mathcal{C}$. When the sets in $\mathcal{C}$ are pairwise disjoint, one obtains the special case with $R=\emptyset$.  

\begin{lemma}[$\Delta$-System Lemma, {\cite[Lemma III.2.6]{kunen}}]\label{delta}
Every uncountable family of finite sets contains an uncountable $\Delta$-system. \qed
\end{lemma}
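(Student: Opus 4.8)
The plan is to prove the $\Delta$-System Lemma by first normalizing the family and then inducting on the common size of its members. Let $\mathcal{C}$ be an uncountable family of finite sets. Since $\mathcal{C}=\bigcup_{n\in\bbN}\{X\in\mathcal{C}:\vp X=n\}$ is a countable union, one of the pieces is uncountable; replacing $\mathcal{C}$ by such a piece, I may assume that there is a natural number $n$ with $\vp X=n$ for all $X\in\mathcal{C}$. I will then show, by induction on $n$, that every uncountable family of $n$-element sets contains an uncountable $\Delta$-system. For $n=1$ the members are singletons, so distinct members are automatically disjoint and an uncountable subfamily of distinct singletons is a $\Delta$-system with root $\emptyset$.

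For the inductive step, fix $n\ge 2$, assume the claim for $n-1$, and let $\mathcal{C}$ be an uncountable family of $n$-element sets. Case 1: some element $x$ belongs to uncountably many members of $\mathcal{C}$. Then $\mathcal{C}'=\{X\setminus\{x\}:X\in\mathcal{C},\ x\in X\}$ is an uncountable family of $(n-1)$-element sets (the map $X\mapsto X\setminus\{x\}$ is injective on sets containing $x$), so by the inductive hypothesis $\mathcal{C}'$ contains an uncountable $\Delta$-system with some root $R$; adjoining $x$ back to each of its members produces an uncountable $\Delta$-system inside $\mathcal{C}$ with root $R\cup\{x\}$.

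Case 2: every element of $\bigcup\mathcal{C}$ belongs to only countably many members of $\mathcal{C}$. In this case I construct a pairwise disjoint subfamily $\{X_\alpha:\alpha<\omega_1\}\subseteq\mathcal{C}$ by transfinite recursion. Suppose $X_\beta$ has been chosen for all $\beta<\alpha$, where $\alpha<\omega_1$. Then $Y_\alpha=\bigcup_{\beta<\alpha}X_\beta$ is a countable union of finite sets, hence countable, and by the Case 2 hypothesis each $y\in Y_\alpha$ lies in only countably many members of $\mathcal{C}$; therefore only countably many members of $\mathcal{C}$ meet $Y_\alpha$. Since $\mathcal{C}$ is uncountable, I may choose $X_\alpha\in\mathcal{C}$ with $X_\alpha\cap Y_\alpha=\emptyset$, preserving pairwise disjointness. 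The resulting $\{X_\alpha:\alpha<\omega_1\}$ is an uncountable $\Delta$-system with root $\emptyset$, completing the induction.

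The argument is entirely elementary, and there is no genuine obstacle; the only places where uncountability is used in an essential way are the verification that $\mathcal{C}'$ is uncountable in Case 1 and the bookkeeping in the recursion of Case 2, where one relies on the regularity of $\omega_1$ (equivalently, that a countable union of finite sets is countable and that countably many "bad" members cannot exhaust an uncountable family). One could alternatively bypass the case split by choosing a maximal pairwise disjoint subfamily $\mathcal{B}\subseteq\mathcal{C}$: if $\mathcal{B}$ is uncountable we are done with root $\emptyset$, and otherwise $\bigcup\mathcal{B}$ is countable, every member of $\mathcal{C}$ meets it by maximality, and pigeonhole on the countable set $\bigcup\mathcal{B}$ yields an element in uncountably many members, reducing to the situation of Case 1.
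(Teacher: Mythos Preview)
Your proof is correct and is essentially the standard elementary argument (reduce to uniform size by pigeonhole, then induct on that size, splitting on whether some point lies in uncountably many members). The paper does not actually prove this lemma: it is stated with a citation to Kunen and marked with \qed, so there is no proof in the paper to compare against; your argument matches the classical one found in the cited reference.
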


\section{The Cases of Abelian and Quasidiagonal \cstar-algebras}\label{sctn3}
In this section, we discuss two special cases  of Theorem \ref{T1}, those
corresponding to  the classes of abelian and quasidiagonal \cstar-algebras. Their proofs (the first of which 
is standard) 
are intended to provide intuition and demonstrate the increase in complexity regarding the corresponding forcing notions that are implemented. It  also displays the natural progression behind Theorem \ref{T1}. We will omit most of the technical
details in this section,
as the results discussed here can be easily inferred by the proofs of the subsequent parts
of the paper.
The reader eager to transition right away to the proof of Theorem \ref{T1} can safely skip ahead to section \ref{sctn4}.

\subsection{Embedding Abelian \cstar-algebras into $\ell_\infty /c_0$}\label{sctn3.1}
The main focus in this part will be on obtaining the abelian version of Theorem \ref{T1}:

\begin{proposition}\label{abelianemb}
For every abelian \cstar-algebra $A$ there exists a ccc forcing notion which forces that $A$ embeds into ${\ell}_{\infty}/c_0.$
\end{proposition}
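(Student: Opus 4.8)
The plan is to realize $\ell_\infty/c_0$ as the Boolean-algebra-type object $\mathcal{P}(\bbN)/\Fin$ on the topological side and to reduce the problem to the known combinatorial fact that, for every partial order (in particular every Boolean algebra, or every $\sigma$-complete one) $\mathbb{P}$, there is a ccc forcing adding an embedding of $\mathbb{P}$ into $\mathcal{P}(\bbN)/\Fin$. Concretely, let $A$ be an abelian \cstar-algebra. By Proposition \ref{fvlemma} we may as well assume $A$ is unital (simplicity is irrelevant in the abelian case; if one worries about it, just handle the unitization), so $A = C(X)$ for a compact Hausdorff space $X$ whose weight equals the density character $\kappa$ of $A$. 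Since $\ell_\infty/c_0 \cong C(\beta\bbN\setminus\bbN)$, by Gelfand duality an embedding $A \hookrightarrow \ell_\infty/c_0$ is the same as a continuous surjection from $\beta\bbN\setminus\bbN$ onto $X$. Thus it suffices to force such a surjection, and the clean way to do this is to force a $\bbN$-indexed family of points of $X$ that is ``dense enough'' to extend to $\beta\bbN\setminus\bbN$ and surject onto $X$.

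The forcing notion $\mathbb{P}$ I would use has conditions that are finite approximations to the desired map. Fix a dense $(\bbQ+i\bbQ)$-subalgebra, equivalently (dualizing) a countable base-like structure; more precisely, fix a base $\{U_\alpha : \alpha < \kappa\}$ of $X$ closed under finite intersections. A condition $p$ is a finite partial function assigning to finitely many $n \in \bbN$ a basic open set $U_{\alpha(n)} \ne \emptyset$ (``the $n$-th point will lie in $U_{\alpha(n)}$''), together with a finite list of ``requirements'' of the form: for a given $\alpha$, at least one as-yet-unassigned coordinate must be forced into $U_\alpha$ (this is the mechanism that forces surjectivity), and of the form: for given $\alpha \ne \beta$ with $U_\alpha \cap U_\beta = \emptyset$, all but finitely many coordinates avoid one of them (this handles well-definedness of the limit map). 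One orders $p \le q$ by reverse inclusion on the function part together with inclusion of requirements. The generic object is then an $\omega$-sequence $(x_n)$ of ``points'' (really, an ultrafilter-germ of open sets) whose associated map $\beta\bbN\setminus\bbN \to X$ is continuous and, by the surjectivity requirements being dense, onto. The density of the relevant dense sets is checked by elementary bookkeeping: given a condition, one can always extend to meet any single requirement because $X$ has no isolated obstruction — every nonempty basic open set contains a point, and one has infinitely many free coordinates available.

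The ccc (indeed property~K) of $\mathbb{P}$ is proved by the standard $\Delta$-system argument (Lemma \ref{delta}): given uncountably many conditions, refine to an uncountable $\Delta$-system in the finite domains and in the finitely many indices named by the requirements, with the root part identical across the subfamily (there are only countably many possibilities for the root data since the function values range over a fixed countable base once we also refine on them); then any two conditions in the refined family are compatible because their function parts agree on the common domain and their requirement sets, referring to disjoint sets of coordinates outside the root, can be simultaneously satisfied. I expect \textbf{the main obstacle} to be formulating the ``requirements'' so that a generic filter genuinely yields a \emph{continuous} surjection rather than merely a dense multifunction: one must ensure that for disjoint basic opens only finitely many coordinates are ``split'', so that the induced map on the Stone--Čech remainder is single-valued and continuous. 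This is exactly the point where working with $\mathcal{P}(\bbN)/\Fin$-style finite-to-cofinite conditions, rather than naive finite partial maps, is essential — and it is the baby version of the compact-perturbation bookkeeping that will dominate the quasidiagonal and general cases in sections \ref{sctn3} and \ref{sctn4}.
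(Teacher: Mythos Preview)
Your approach has a genuine gap in the ccc argument. The forcing you describe --- finite partial functions from $\bbN$ into a base $\{U_\alpha : \alpha < \kappa\}$ for $X$, ordered by extension/refinement --- is not ccc whenever $X$ has uncountable cellularity. For instance, if $X$ is the one-point compactification of a discrete space of size $\aleph_1$, the singletons $\{\alpha\}$ are pairwise disjoint basic opens, and the conditions $p_\alpha$ sending $0 \mapsto \{\alpha\}$ form an uncountable antichain regardless of how you formalize the ordering. Your claim that ``there are only countably many possibilities for the root data since the function values range over a fixed countable base once we also refine on them'' is precisely where the argument breaks: the base has cardinality $\kappa$, not $\aleph_0$, and the $\Delta$-system lemma applied to the \emph{domains} does nothing to bound the number of possible \emph{values} on the root.

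The paper handles this by a reduction you skipped: first embed $C(X)$ into $C(Y)$ for a Stone space $Y$ (via $L^\infty(X)$ or $C(\beta X_d)$), then use Stone duality to replace the problem by that of embedding the Boolean algebra $B = \clopen(Y)$ into $\cP(\bbN)/\Fin$. The forcing $\mathbb{P}_B$ has conditions $(B_p, n_p, \psi_p)$ with $B_p$ a finite Boolean subalgebra and $\psi_p: B_p \to 2^{n_p}$. The point is that the \emph{codomain} $2^{n_p}$ is finite: after uniformizing $n_p$ and applying the $\Delta$-system lemma to the $B_p$'s with root $Z$, there are only finitely many possible restrictions $\psi_p\restriction Z$, and ccc (indeed property~K) follows. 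The zero-dimensional reduction is exactly what converts a $\kappa$-sized target into a finite one.

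A secondary remark: your stated ``main obstacle'' --- arranging continuity of the induced map on the remainder --- is not an obstacle at all. Any $f:\bbN \to X$ extends continuously to $\beta f:\beta\bbN \to X$, and the restriction to $\beta\bbN\setminus\bbN$ is automatically continuous; surjectivity just needs every nonempty basic open to contain infinitely many $f(n)$, a routine density argument. The real obstacle is ccc, and that is where your proposal fails.
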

Exploiting the fact that the categories of Boolean algebras, Stone spaces (i.e. zero-dimensional, compact, Hausdorff spaces) and \cstar-algebras of continuous functions on Stone spaces are all equivalent  (by a combination of the Stone duality  \cite[section II.4]{johnstone} and  the Gelfand--Naimark duality  \cite[section IV.4]{johnstone}), one can translate the statement of the proposition above to a statement regarding Boolean algebras. In particular, it is enough to show that for any Boolean algebra $B$ there exists a ccc forcing notion which forces that $B$ embeds into $\mathcal{P}(\N)/\Fin.$  If $B$ is a Boolean algebra, we denote by $\stone(B)$ its \textit{Stone space}, the space of all ultrafilters on $B$ equipped with the Stone topology.

To see the aforementioned translation, first of all note that it suffices to prove the assertion of Proposition \ref{abelianemb} for \cstar-algebras of the form $C(Y)$ with $Y$ being a Stone space, as every abelian \cstar-algebra embeds into such an algebra. Indeed, any
abelian \cstar-algebra $C(X)$ naturally embeds into the von Neumann algebra
$L^\infty(X)$ which, being a real rank zero unital \cstar-algebra, is of the form $C(Y)$ with $Y$
zero-dimensional, compact and Hausdorff. We provide an alternative proof
for the reader who is not familiar with the theory of von Neumann algebras.
Every non-unital, abelian \cstar-algebra embeds into its unitization, which is a \cstar-algebra of continuous functions on a compact, Hausdorff space $X$.
For any compact, Hausdorff space $X$, let $X_d$ consist of the underlying set of $X$ equipped with the discrete topology. Then, the identity map from $X_d$ to $X$ uniquely extends to a continuous map from $\beta X_d$ onto $X$ and this, in turn,  implies the existence of an embedding of $C(X)$ into $C(\beta X_d).$ The \v{C}ech--Stone compactification of a discrete space is always zero-dimensional  and this establishes the previous claim.

Now, if $X$ is a Stone space, consider the Boolean algebra $B = \clopen(X)$ of all clopen subsets of $X$. Due to the Stone duality, the existence of a ccc forcing notion that forces the embedding of $B$ into  $\mathcal{P}(\N)/\text{Fin}$ yields (in any generic extension of the universe) a continuous surjection  from $\stone(\mathcal{P}(\N)/\text{Fin})\cong \beta\N\setminus\N$ onto $\stone(B)\cong X.$ By contravariance due to the Gelfand--Naimark duality, one obtains  an injective $*$-homomorphism from $C(X)$ into $C(\beta\N\setminus\N)$, with the latter being isomorphic to $ \ell_{\infty}/c_0.$

Thus, we turn our attention to providing the forcing notion guaranteed by the following folklore proposition:

\begin{proposition}\label{boolean}
For every Boolean algebra $B$ there exists a ccc forcing notion $\mathbb{P}_B$ which forces that $B$ embeds into $\mathcal{P}(\N)/\text{Fin}.$
\end{proposition}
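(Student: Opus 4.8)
The plan is to let $\mathbb{P}_B$ be the forcing that generically builds a partial isomorphism from $B$ into $\mathcal{P}(\mathbb{N})/\mathrm{Fin}$, approximated by finite pieces of data. Concretely, a condition $p$ should consist of a finite subalgebra $B_p \Subset B$ (or just a finite subset of $B$ closed under the Boolean operations) together with a function $f_p$ assigning to each $b \in B_p$ a subset $f_p(b) \subseteq \mathbb{N}$, subject to the requirement that the assignment respects the Boolean structure modulo finite: $f_p(b \wedge b') =^* f_p(b) \cap f_p(b')$, $f_p(b \vee b') =^* f_p(b) \cup f_p(b')$, $f_p(\neg b) =^* \mathbb{N} \setminus f_p(b)$ on the nose or up to a finite error, $f_p(0) \in \mathrm{Fin}$, $f_p(1) =^* \mathbb{N}$, and — crucially for injectivity — $f_p(b) \setminus f_p(b')$ is infinite whenever $b \not\le b'$ in $B_p$. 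Actually, to avoid the bookkeeping of finite errors it is cleaner to demand these identities hold exactly (every condition represents a genuine Boolean homomorphism from $B_p$ into $\mathcal{P}(\mathbb{N})$, together with the "infinite difference" condition witnessing that distinct elements of $B_p$ map to $=^*$-distinct sets). Order $\mathbb{P}_B$ by: $q \le p$ iff $B_q \supseteq B_p$ and for every $b \in B_p$, $f_q(b) =^* f_p(b)$ (finite symmetric difference); the point of allowing $=^*$ rather than $\supseteq$ or $=$ in the order is to gain room to extend.

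Next I would verify the three standard ingredients. \emph{Genericity gives an embedding:} for each $b \in B$ the set $D_b = \{p : b \in B_p\}$ is dense (given $p$, the finite set $B_p \cup \{b\}$ generates a finite subalgebra, and one can extend $f_p$ to it by splitting each atom of $B_p$ into pieces and routing the atoms of the new algebra appropriately, keeping all differences infinite); and for each pair $b \not\le b'$ in $B$ the set $D_{b,b'} = \{p : b,b' \in B_p\}$ forces, via the infinite-difference clause, that the image of $b$ is not contained mod finite in the image of $b'$. Given a filter $G$ meeting all $D_b$ and $D_{b,b'}$, define $\Phi(b) = $ the $=^*$-class of $\bigcup \{f_p(b) : p \in G,\ b \in B_p\}$ — one checks this is well defined because any two conditions in $G$ are compatible and hence agree mod finite on common elements, and that $\Phi$ is a Boolean embedding into $\mathcal{P}(\mathbb{N})/\mathrm{Fin}$. \emph{The ccc (indeed property K):} given uncountably many conditions $p_\alpha$, apply the $\Delta$-System Lemma (Lemma~\ref{delta}) to the finite sets $B_{p_\alpha}$ to get an uncountable subfamily with root $R$; by further thinning (countably many finite Boolean algebras on $R$, countably many assignments $f{\restriction}R$ up to $=^*$ — here one uses that mod-finite there are only countably many possibilities once we fix a representative on a finite generating set, or one thins so that the $f_{p_\alpha}{\restriction}R$ are literally equal) one arranges that all $p_\alpha$ in the subfamily agree on $R$ and have their "new" parts $B_{p_\alpha}\setminus R$ disjoint; then any two such conditions are compatible, since one can amalgamate the two finite Boolean algebras over their common part and spread the two homomorphisms onto disjoint (still infinite) pieces of $\mathbb{N}$.

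The main obstacle — and the only place requiring genuine care — is the amalgamation step used both for density of $D_b$ and for compatibility in the ccc argument: given a finite Boolean algebra $B_0$ with a homomorphism $f_0$ into $\mathcal{P}(\mathbb{N})$ satisfying the infinite-difference condition, and a larger finite $B_1 \supseteq B_0$, one must extend $f_0$ (up to $=^*$ on $B_0$) to $f_1$ on $B_1$ preserving all the identities \emph{and} keeping every nonzero difference infinite. This is a finitary combinatorial fact about partitioning infinite sets: each atom of $B_0$ splits into finitely many atoms of $B_1$, and one needs to partition the infinite set $f_0(\text{atom})$ into the required number of infinite pieces — routine, but it is where the structure of the conditions has to be chosen exactly right so that the infinite-difference invariant is maintainable under extension. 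Once that lemma is isolated and proved, the rest is the boilerplate above, and as the authors note this whole proposition is folklore; the forcing $\mathbb{P}_B$ here is the Boolean-algebra shadow of the \cstar-algebra forcing $\mathbb{E}_A$ to be built in section~\ref{sctn4}.
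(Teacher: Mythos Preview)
Your forcing is not ccc. Take any $b\in B$ with $0<b<1$, let $B_0=\{0,b,\neg b,1\}$, and for each infinite coinfinite $X\subseteq\mathbb{N}$ let $p_X$ be the condition with $B_{p_X}=B_0$, $f_{p_X}(b)=X$, $f_{p_X}(\neg b)=\mathbb{N}\setminus X$. If $X\ne^* Y$ then $p_X$ and $p_Y$ are incompatible, since any common refinement $q$ would have $f_q(b)=^*X$ and $f_q(b)=^*Y$. This is an antichain of size $2^{\aleph_0}$. The mistake is in the sentence ``countably many assignments $f{\restriction}R$ up to $=^*$'': there are continuum many $=^*$-classes in $\mathcal{P}(\mathbb{N})$, not countably many, so the thinning step in your $\Delta$-system argument is impossible.

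The paper's definition avoids exactly this pitfall by making the conditions genuinely finitary. A condition is $(B_p,n_p,\psi_p)$ where $\psi_p\colon B_p\to 2^{n_p}$ records only the first $n_p$ bits of the characteristic function --- an \emph{arbitrary} map, not a homomorphism --- and the order requires that the \emph{new} segment $\psi_p(a){\restriction}[n_q,n_p)$ be an injective Boolean homomorphism while the old segment is literally preserved. Because the range $2^{n_p}$ is finite, for fixed $B_p$ and $n_p$ there are only finitely many possible $\psi_p$, and the uniformization step in the ccc argument goes through: after the $\Delta$-system, one thins so that all conditions share the same $n_p$ and the same $\psi_p{\restriction}R$, and then amalgamates by embedding the algebra generated by $B_p\cup B_q$ into a fresh block $2^m$. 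Your intuition that the generic object should be built from finite partial data is right; what you are missing is that ``finite'' must apply to the codomain as well as the domain.
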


We identify the subsets of $\N$ with their characteristic functions, and we think them as elements
of $2^\N$. With this in mind, we view the Boolean algebra $\mathcal{P}(\N)/\text{Fin}$ as the space of all binary sequences $2^\N$ modulo the equivalence relation
\[
x\sim y \hspace{8pt} \text{if and only if} \hspace{8pt} |\{n\in\N : x(n)\neq y(n)\}| < \aleph_0
\]
for all $x,y\in 2^\N.$ 

\begin{definition}\label{booleandef1}
Fix a Boolean algebra $B$ and let $\mathbb{P}_B$ be the set of all triples
\[
p = (B_p,n_p,\psi_p)
\]
where:
\begin{enumerate}
\item $B_p$ is a finite  Boolean subalgebra of $B$, 
\item $n_p\in\N$,
\item$\psi_p:B_p\rightarrow 2^{n_p}$ is an arbitrary map.
\pushcounter
\end{enumerate}
For $p,q\in\mathbb{P}_B$, we say that \textit{$p$ extends $q$} and write  $p < q$ if  the following hold:
\begin{enumerate}
\popcounter
\item $B_q\subseteq B_p$,
\item $n_q < n_p$,
\item $\psi_q\subset \psi_p$  (i.e. $\psi_p(a)(i) = \psi_q(a)(i)$ for all $a\in B_q$ and $i\leq n_q$),
\item the map from $B_q$ into   $2^{n_p-n_q}$ given by 
\begin{align*}
a &\mapsto   \psi_p(a)_{\restriction[n_q, n_p)}
\end{align*}
is an injective homomorphism of Boolean algebras.
\end{enumerate}
\end{definition}
This  defines a strict partial order on $\mathbb{P}_B$.  Conditions in $\mathbb{P}_B$ represent partial maps from a finite subset of $B$ to an initial segment of a characteristic function corresponding to a subset of $\N$.
Any finite Boolean subalgebra of $B$ is isomorphic to the Boolean algebra 
given by the powerset of a finite set and hence can be embedded into $2^m$ for $m\in\N$ large
enough. Therefore one can always extend a given condition $p \in \mathbb{P}_B$ to a $q < p$
such that $B_q$ contains any arbitrary finite subset of $B$ and $n_q > n_p$,
while making sure that in the added segment the map
is actually an injective homomorphism. For this reason,
a generic filter $G$ in $\mathbb{P}_B$ provides a pool of maps
which can be `glued' together in a coherent way, inducing thus a function $\Psi_G$ which, by genericity, is defined everywhere on $B$:
\begin{align*}
\Psi_G: B &\to \mathcal{P}(\N)  \\
b & \mapsto \bigcup_{\set{p \in G : b \in B_p}} \psi_p(b).
\end{align*}
Here we identify $\psi_p(b) \in 2^{n_p}$ with the corresponding subset of $n_p$.
Moreover, by definition of the order relation on $\mathbb{P}_B$, the map $\Psi_G$ is,
modulo the ideal of finite sets, injective and preserves all Boolean operations.

By using a standard uniformization argument and an application of the $\Delta$-System Lemma (Lemma \ref{delta}), when given an uncountable set of conditions $U \subseteq \mathbb{P}_B$,
it is possible to find an uncountable $W \subseteq U$, $n \in \N$ and $Z \Subset B$
such that $n_p = n$, $B_p \cap B_q = Z$ and $\psi_p(b) = \psi_q(b)$ for all $p,q \in W$ and $b \in
Z$. Thus the problem of whether $\mathbb{P}_B$ is ccc is reduced to the following:

\begin{lemma}
Let $p,q\in \mathbb{P}_B$ be two conditions such that $n_p=n_q$ and the maps $\psi_p,\psi_q$ agree on $B_p\cap B_q$. Then, $p$ and $q$ are compatible.
\end{lemma}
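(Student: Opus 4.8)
The plan is to exhibit an explicit common extension $r=(B_r,n_r,\psi_r)\in\mathbb{P}_B$ of $p$ and $q$, which shows they are compatible. First I would let $B_r$ be the Boolean subalgebra of $B$ generated by $B_p\cup B_q$; since a finitely generated Boolean algebra is finite, $B_r$ is again a finite Boolean subalgebra of $B$. Writing $m\ge 1$ for the number of atoms of $B_r$, fix an isomorphism $h\colon B_r\to 2^m$ of Boolean algebras (every finite Boolean algebra is canonically the powerset of its finite set of atoms), and set $n_r=n_p+m=n_q+m$, so that $n_r>n_p=n_q$.

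Next I would define $\psi_r\colon B_r\to 2^{n_r}$ by prescribing the restriction of $\psi_r(a)$ to the blocks $[0,n_p)$ and $[n_p,n_r)$ separately, for each $a\in B_r$. On the new block $[n_p,n_r)$, identified with $m$, set $\psi_r(a)_{\restriction[n_p,n_r)}=h(a)$. On the old block $[0,n_p)$, set $\psi_r(a)_{\restriction[0,n_p)}=\psi_p(a)$ if $a\in B_p$, set $\psi_r(a)_{\restriction[0,n_p)}=\psi_q(a)$ if $a\in B_q$, and set $\psi_r(a)_{\restriction[0,n_p)}=0$ for the remaining $a\in B_r\setminus(B_p\cup B_q)$. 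The first two clauses are consistent on $B_p\cap B_q$ exactly because $\psi_p$ and $\psi_q$ agree there by hypothesis, so $\psi_r$ is a well-defined map.

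Finally I would check $r<p$, the verification of $r<q$ being symmetric. The requirements $B_p\subseteq B_r$ and $n_p<n_r$ are immediate, $\psi_p\subset\psi_r$ holds by construction, and the map $B_p\to 2^{n_r-n_p}$ given by $a\mapsto\psi_r(a)_{\restriction[n_p,n_r)}=h(a)$ is the restriction to the subalgebra $B_p$ of the Boolean isomorphism $h$, hence an injective homomorphism of Boolean algebras. Thus $r$ extends both $p$ and $q$, so they are compatible. I do not expect a genuine obstacle here: the content is just the structure theory of finite Boolean algebras, and the only point requiring mild care is the bookkeeping on the old coordinates $[0,n_p)$ for elements of $B_r$ lying outside $B_p\cup B_q$, where the values may be assigned arbitrarily.
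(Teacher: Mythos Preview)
Your proof is correct and follows essentially the same approach as the paper: both take $B_r$ to be the finite Boolean subalgebra generated by $B_p\cup B_q$, fix an isomorphism onto $2^m$, and concatenate with $\psi_p$, $\psi_q$, or a filler value on the old block. The only cosmetic difference is that you explicitly use $h(a)$ on the new block for \emph{every} $a\in B_r$, whereas the paper's phrasing ``equal to zero elsewhere'' is slightly ambiguous about elements outside $B_p\cup B_q$; either choice works since the extension relation only constrains the values on $B_p$ and $B_q$.
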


To see that this holds, define $B_s$ to be the (finite) Boolean subalgebra of $B$ that is generated by $B_p\cup B_q$ and choose a Boolean algebra isomorphism $f:B_s\rightarrow 2^m$ for some $m\in\N$. Set $n_s=n_p+m$ and define the map $\psi_s$ to be equal to $\psi_p$ concatenated with $f$ on $B_p$, equal to $\psi_q$ concatenated with $f$ on $B_q\setminus B_p$ and equal to zero elsewhere. Then, the condition $s=(B_s,n_s,\psi_s)$ extends both $p$ and $q$.

\subsection{Embedding Quasidiagonal \cstar-algebras into $\mathcal{Q}(H)$}\label{sctn3.2}
Quasidiagonal \cstar-algebras possess  strong local properties and can be thought (at least in the separable case) as consisting of compact pertubations of simultaneously block-diagonalisable operators. A map $\varphi:A\rightarrow B$ between unital  \cstar-algebras is called \textit{unital completely positive} (abbreviated as \textit{u.c.p.}) if it is unital, linear and the tensor product map $\phi\otimes \Id_n : A\otimes M_n(\C)\rightarrow B\otimes M_n(\C)$ defined on matrix algebras over $A$ and $B$ is positive for 
all $n\in\N$ (\cite{blackadar}, section II.6.9). U.c.p. maps are always contractive and $\ast$-preserving. For a \cstar-algebra $A$, we will denote its unitization by $\tilde{A}$.

\begin{definition}\label{uqddef}
A \cstar-algebra $A$ is \textit{quasidiagonal} if  for every finite set $F\Subset \tilde{A}$ and  $\epsilon >0,$ there exist $n\in\N$ and a u.c.p. map $\varphi:\tilde{A}\rightarrow M_n(\C)$ such that
\[
\lVert \varphi(ab)-\varphi(a)\varphi(b)\rVert < \epsilon \ \text{for all} \ a,b\in F
\]
and
\[
\lVert \varphi(a)\rVert > \lVert a\rVert -\epsilon \ \text{for all} \ a\in F.
\]
\end{definition}

This section is devoted to the following:

\begin{proposition}\label{qdt1}
For every quasidiagonal \cstar-algebra $A$ there exists a ccc poset ${\mathbb{Q}\mathbb{D}}_A$ which forces an embedding of $A$ into $\mathcal{Q}(H).$
\end{proposition}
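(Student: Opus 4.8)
The plan is to define a forcing notion $\bbQD_A$ whose conditions approximate an embedding $\Theta\colon A \to \cQ(H)$ by specifying a finite separable ``sub-picture'' together with a finite-dimensional chunk of a representation that witnesses quasidiagonality. By Proposition~\ref{fvlemma} we may assume $A$ is unital and simple, so that any unital $*$-homomorphism into $\cQ(H)$ is automatically injective; moreover since $A$ is quasidiagonal and simple it is a direct limit of its separable quasidiagonal (unital, simple) subalgebras, and the embedding we build will be \emph{locally trivial}. Concretely, a condition $p \in \bbQD_A$ should consist of: a finite set $F_p \Subset A$, a rational tolerance $\epsilon_p > 0$, a finite-rank projection $Q_p \in \cB(H)$, and a u.c.p.\ map $\varphi_p \colon \tilde A \to Q_p\cB(H)Q_p \cong M_{n_p}(\bbC)$ which is $\epsilon_p$-multiplicative and $\epsilon_p$-isometric on $F_p$ (as in Definition~\ref{uqddef}); the order declares $q \le p$ when $F_q \supseteq F_p$, $\epsilon_q \le \epsilon_p$, $Q_q \ge Q_p$, and $\varphi_q$ agrees with $\varphi_p$ up to a small error on the corner $Q_p$ — more precisely $(Q_p - Q_q)\perp$-type compatibility so that the block-diagonal operators $\sum_p \varphi_p(a)$ glue to a well-defined element of $\cB(H)$ modulo $\mathcal K(H)$. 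A generic filter $G$ then produces an increasing sequence of finite-rank projections $Q_{p_0} \le Q_{p_1} \le \cdots$ with strong limit $\Id_H$, and $\Theta_G(a) := \pi(\text{strong-}\lim_n \varphi_{p_n}(a))$ is, by genericity through the natural dense sets (which force $F_p$ to exhaust $A$, $\epsilon_p \to 0$, and $Q_p \to \Id_H$), a well-defined unital $*$-homomorphism $A \to \cQ(H)$, hence an embedding.

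The verification that $\bbQD_A$ forces an embedding is the routine part: one checks that the relevant collections of conditions are dense (given any $p$, one can enlarge $F_p$ to include any prescribed finite subset of $A$, shrink $\epsilon_p$, and — using quasidiagonality of $A$ together with Corollary~\ref{voic} to conjugate a finite-dimensional approximation into the orthogonal complement of $Q_p$ while fixing $Q_p$ — extend $Q_p$ to a strictly larger finite-rank projection with a compatible $\varphi_q$). That the errors are summable and that $\Theta_G$ is multiplicative and isometric on each $a \in A$ follows because along the filter the approximate-multiplicativity defect and the isometry defect go to zero while the block structure makes cross-terms compact; this is exactly where the quasidiagonality hypothesis is used.

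The main obstacle is proving that $\bbQD_A$ is ccc — in fact it will satisfy property~K. Given an uncountable set $U \subseteq \bbQD_A$, first apply the $\Delta$-system Lemma (Lemma~\ref{delta}) to the finite sets $F_p$ to get an uncountable $U' \subseteq U$ whose $F_p$'s form a $\Delta$-system with root $F$; then, since there are only countably many possibilities for the ``type'' of a condition restricted to the root (the rationals $\epsilon_p$, the isomorphism type and matrix entries of $\varphi_p \restriction F$ up to a countable dense set of parameters, and — after a further refinement — one may assume the $Q_p$ all have the same finite rank and, by a separability-of-the-Grassmannian argument, lie within a small norm-ball of one another, indeed one can arrange $Q_p = Q$ fixed and $\varphi_p \restriction F = \varphi_0$ fixed), shrink to an uncountable $W \subseteq U'$ on which all of this data is constant. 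The point is then that any two $p, q \in W$ are compatible: one builds a common extension $s$ with $F_s = F_p \cup F_q$, $\epsilon_s = \min(\epsilon_p,\epsilon_q)$, $Q_s = Q_p \oplus (Q_q - Q) \oplus (\text{new block})$ acting on three mutually orthogonal finite-dimensional subspaces, and $\varphi_s$ assembled block-diagonally from $\varphi_p$ on the first block, (the part of) $\varphi_q$ on the second, and a fresh quasidiagonal approximation of $A$ on the third, chosen via Corollary~\ref{voic} so that everything is compatible in the order. The delicate step is the uniformization that makes $Q_p$ (not just its rank) constant on an uncountable set: this uses that the set of finite-rank projections of a fixed rank is separable in the norm topology, so uncountably many $Q_p$ cluster, and then a small perturbation (absorbed into the $\epsilon$-slack, cf.\ Lemma~\ref{lemma:1}) makes them literally equal after a unitary conjugation that one carries along. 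Once compatibility of constant-data conditions is established, property~K — and a fortiori ccc — follows, and then Corollary~\ref{MA}-style reasoning under MA yields a sufficiently generic filter when the density character of $A$ is below $2^{\aleph_0}$.
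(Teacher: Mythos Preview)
There are two genuine problems. First, your reduction to the simple case via Proposition~\ref{fvlemma} is unjustified: that proposition embeds $A$ into a simple unital algebra of the same density character, but it is not known whether one can do this while preserving quasidiagonality, and the paper explicitly flags this. (Your construction does not actually need simplicity, since the approximate-isometry clause in the definition of quasidiagonality already forces the limiting map to be isometric, hence injective; but as written the reduction is a gap.) Second, and more seriously, your ccc argument does not work as stated. You allow arbitrary finite-rank projections $Q_p$ and genuine u.c.p.\ maps $\varphi_p\colon \tilde A\to M_{n_p}(\bbC)$ as data in a condition, so even after fixing the rank and the root $F$ there are uncountably many possibilities; separability only gives clustering, not equality. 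Your fix --- ``a small perturbation \dots makes them literally equal after a unitary conjugation that one carries along'' --- does not make sense for a fixed poset: the conditions $p,q$ are what they are, and you must exhibit a common extension of those specific conditions, not of unitary conjugates of them. One could try to build enough slack into the order relation to absorb nearby-but-unequal data, but this is exactly the delicate bookkeeping you have not supplied.

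The paper sidesteps all of this by \emph{discretizing the poset in advance}: the projections are taken from a fixed increasing sequence $R_n$ (onto the span of the first $n$ basis vectors), and the maps $\psi_p$ are defined only on the finite set $F_p$ with values in a fixed countable dense subset $D_{n_p}\subseteq R_{n_p}\cB(H)R_{n_p}$. Thus for a fixed root there are literally only countably many possible restrictions $(n_p,\epsilon_p,\psi_p\restriction F)$, and pigeonhole immediately gives an uncountable set on which these coincide exactly; compatibility of two such conditions is then a one-line application of quasidiagonality to $F_p\cup F_q$ (no Voiculescu needed), adjoining a fresh block orthogonal to $R_{n_p}=R_{n_q}$. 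The moral: put the countability into the definition of the conditions, not into a post-hoc perturbation argument.
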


As opposed to the proof of Theorem \ref{T1} in section \ref{sctn4}, where we can apply Proposition
\ref{fvlemma}, we will not assume that
$A$ is simple in the proof of Proposition \ref{qdt1}. Such assumption would have made Definition \ref{qddef} slightly simpler, but, to our knowledge, it is not known whether
it is possible to embed a given quasidiagonal \cstar-algebra into a simple quasidiagonal one
(an application of the Downward L\"owenheim--Skolem Theorem (\cite[Theorem 2.6.2]{Muenster}) would then provide a quasidiagonal simple \cstar-algebra with the same density character as the one we started with).
We may assume though that $A$ is unital. Fix ${\{e_n\}}_{n\in\N}$ an orthonormal basis of $H$ and for every $n\in\N$ let $R_n$ be the orthogonal projection onto the linear span of the set $\{e_k :k\leq n\}.$ Since for every $n\in\N$ the space $R_n \mathcal{B}(H) R_n$ is finite-dimensional, choose $D_n$ a countable dense subset that contains $R_n$. For $n<m\in \N$, we also require  that $D_n\subseteq R_nD_mR_n.$ 

Similar to the case of Boolean algebras, we define a forcing notion for a quasidiagonal \cstar-algebra whose  conditions represent  partial maps from a finite subset of $A$ to an ``initial segment'' in $\mathcal{B}(H)$, which in this case is a \textit{corner} $R_n\mathcal{B}(H)R_n$ for some $n\in\N$. 
Extensions of conditions are defined  as to yield better approximations, maps are defined on a bigger domain and take values on a larger corner in $\mathcal{B}(H)$. It is only on a sufficient part of the larger corner that we shall request that the new maps preserve the norm of elements  and all algebraic operations, modulo a  small error (which disappears once one passes to the Calkin algebra).

\begin{definition}\label{qddef}
Let $A$ be a unital, quasidiagonal \cstar-algebra  and define ${\mathbb{Q}\mathbb{D}}_A$ to be the set of all tuples
\[
p = (F_p,n_p,\epsilon_p,\psi_p)
\]
such that:
\begin{enumerate}
\item $F_p\Subset A$ is such that $1\in F_p$,
\item $n_p\in\N$,
\item $\epsilon_p\in {\Q}^+$,
\item $\psi_p:F_p\rightarrow D_{n_p}$ is a unital map such that 
$\lVert \psi_p(a)\rVert \le\lVert a\rVert $ \text{for all}  $a\in F_p
$. This map is \textit{not} required to be linear or self-adjoint.\label{qdmap}
\pushcounter
\end{enumerate}
For $p,q\in {\mathbb{Q}\mathbb{D}}_A$, we  write $p < q$ if the following hold:
\begin{enumerate}
\popcounter
\item $F_q\subseteq F_p$,
\item $n_q < n_p$,
\item $\epsilon_p < \epsilon_q$,
\item \label{item:qd} $\psi_p(a) R_{n_q} = R_{n_q} \psi_p(a) = \psi_q(a)$ for all $a\in F_q$,
\item  $\lVert \psi_p(a)(R_{n_p}-R_{n_q})\rVert > \lVert  a\rVert -\epsilon_q$ for all $a\in F_q$,
\item  for $a, b \in A$ and $\lambda, \mu \in \C$ define
\begin{align*}
\Delta^{p,+}_{a,b,\lambda,\mu} &:= \psi_p(\lambda a + \mu b) - \lambda \psi_p(a) - \mu \psi_p(b),
\\
\Delta^{p,*}_a 
&:= \psi_p( a^*) -  \psi_p(a)^*,
\\
\Delta^{p,\cdot}_{a,b} 
&:= \psi_p(a  b) - \psi_p(a) \psi_p(b).
\end{align*}
Then we require
\begin{enumerate}
\item $\lVert \Delta^{p,+}_{a,b,\lambda,\mu}(R_{n_p}-R_{n_q})\rVert < \epsilon_q - \epsilon_p$  if $a,b,\lambda a +\mu b\in F_q,$
\item $\lVert \Delta^{p,*}_a(R_{n_p}-R_{n_q})\rVert < \epsilon_q - \epsilon_p$ if $a,a^*\in F_q,$
\item $\lVert \Delta^{p,\cdot}_{a,b}(R_{n_p}-R_{n_q}) \rVert < \epsilon_q - \epsilon_p$ if $a,b,ab\in F_q.$
\end{enumerate}
\end{enumerate}
\end{definition}

Item \ref{item:qd} above displays the block-diagonal fashion of the extension of conditions and plays a crucial role in ascertaining that the relation $<$ is transitive. To demonstrate it, by considering multiplication as an example, for conditions $p < q < s$ in ${\mathbb{Q}\mathbb{D}}_A$
we have that
\begin{align*}
\lVert \Delta^{p,\cdot}_{a,b}(R_{n_p}-R_{n_s}) \rVert 
&\leq \lVert \Delta^{p,\cdot}_{a,b}(R_{n_p}-R_{n_q}) \rVert + \lVert \Delta^{p,\cdot}_{a,b}(R_{n_q}-R_{n_s}) \rVert 
\\
&< \epsilon_q - \epsilon_p + \lVert \Delta^{p,\cdot}_{a,b}(R_{n_q}-R_{n_s}) \rVert.
\end{align*}
Item \ref{item:qd} implies that
\[ \psi_p(c)(R_{n_q}-R_{n_s}) = \psi_q(c)(R_{n_q}-R_{n_s}) = (R_{n_q}-R_{n_s})\psi_q(c)(R_{n_q}-R_{n_s}),
\]
for all $c\in F_s$. Thus 
\begin{multline*}
\psi_p(a)\psi_p(b)(R_{n_q}-R_{n_s}) = \psi_p(a) (R_{n_q}-R_{n_s})\psi_q(b)(R_{n_q}-R_{n_s}) \\
= \psi_q(a)\psi_q(b)(R_{n_q}-R_{n_s}),
\end{multline*}
which in turn yields
\[
\lVert \Delta^{p,\cdot}_{a,b}(R_{n_q}-R_{n_s}) \rVert < \epsilon_s - \epsilon_q.
\]

Note that for any finite set $F\Subset A$ and $n\in\N$ there are only countably many maps $\psi:F\rightarrow D_n$ as in condition \ref{qdmap}. This, along with a standard uniformization argument and an application of the $\Delta$-System Lemma (Lemma \ref{delta}), reduces 
(similarly to the case of Boolean algebras) the problem of whether the poset $\mathbb{Q}\mathbb{D}_A$ is ccc to the following:

\begin{lemma}\label{qdcomp}
Let $p,q\in {\mathbb{Q}\mathbb{D}}_A$  be two conditions such that $n_p=n_q , \epsilon_p=\epsilon_q$ and the maps $\psi_p,\psi_q$ agree on $F_p\cap F_q$. Then, $p$ and $q$ are compatible.
\end{lemma}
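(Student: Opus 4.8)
The plan is to build a common extension $s < p, q$ by placing essentially disjoint copies of (completions of) $p$ and $q$ side by side on a larger corner of $\mathcal{B}(H)$, much as in the Boolean case but now using quasidiagonality to produce the finite-dimensional approximation with the required near-multiplicativity and norm-preservation. Write $N = n_p = n_q$ and $\varepsilon = \varepsilon_p = \varepsilon_q$. Set $F_s = F_p \cup F_q$ (so $1 \in F_s$ and $F_s$ is finite), pick a rational $\varepsilon_s$ with $0 < \varepsilon_s < \varepsilon$, and aim to define $n_s$ and $\psi_s : F_s \to D_{n_s}$ with $\psi_s(a) R_N = R_N \psi_s(a) = \psi_p(a) = \psi_q(a)$ for $a \in F_p \cap F_q$, and more generally with the block-diagonal compatibility in item \ref{item:qd} holding for both $p$ and $q$. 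Because $\psi_p$ and $\psi_q$ agree on $F_p \cap F_q$, the values on the shared corner $R_N\mathcal{B}(H)R_N$ are unambiguous, so the only real work is choosing what happens on the orthogonal complement of $R_N[H]$ inside the larger corner.

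First I would invoke quasidiagonality of $A$ (Definition \ref{uqddef}): for the finite set $F_s$ together with all finitely many products $ab$, sums $\lambda a + \mu b$ and adjoints $a^*$ that land in $F_s$, and for a tolerance $\delta$ much smaller than $\varepsilon - \varepsilon_s$, obtain $m \in \N$ and a u.c.p.\ map $\varphi : \tilde A \to M_m(\C)$ that is $\delta$-multiplicative on $F_s$ and satisfies $\|\varphi(a)\| > \|a\| - \delta$ for $a \in F_s$. Identify $M_m(\C)$ with a corner $(R_{N+m} - R_N)\mathcal{B}(H)(R_{N+m}-R_N)$ by choosing an appropriate rank-$m$ subspace of the span of $\{e_k : N < k \le N+m\}$; then, after perturbing the values slightly to land in the countable dense set, define $n_s$ (at least $N + m$, possibly a bit larger to accommodate $D_{n_s} \subseteq R_{n_s} D_{n_s} R_{n_s}$ bookkeeping) and set $\psi_s(a) = \psi_p(a) \oplus \varphi(a)$ for $a \in F_p$, i.e.\ $\psi_s(a) = \psi_p(a) + (\text{the image of }\varphi(a)\text{ in the new corner})$, and similarly $\psi_s(a) = \psi_q(a) \oplus \varphi(a)$ for $a \in F_q \setminus F_p$, and $\psi_s(a) = 0$ on any leftover part of the corner. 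Unitality of $\psi_p$, $\psi_q$ and $\varphi$ gives unitality of $\psi_s$, and the norm bound $\|\psi_s(a)\| \le \|a\|$ follows since $\psi_p(a)$, $\psi_q(a)$ and $\varphi(a)$ all have norm at most $\|a\|$ and the summands act on orthogonal subspaces.

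It then remains to check the extension axioms of Definition \ref{qddef} for $s < p$ (the case $s < q$ is symmetric): items concerning $F_q \subseteq F_s$, $n_q < n_s$, $\varepsilon_s < \varepsilon_q$ are immediate; item \ref{item:qd} holds because $\psi_s(a)$ restricted to $R_N[H]$ is exactly $\psi_p(a)$ by construction; the norm-preservation clause $\|\psi_s(a)(R_{n_s} - R_N)\| > \|a\| - \varepsilon$ follows from $\|\varphi(a)\| > \|a\| - \delta > \|a\| - \varepsilon$; and the three $\Delta$-estimates follow from $\delta$-multiplicativity/linearity/self-adjointness of $\varphi$ on the relevant part of the corner, since on $(R_{n_s} - R_N)[H]$ the map $\psi_s$ agrees with (the image of) $\varphi$, and $\delta < \varepsilon - \varepsilon_s = \varepsilon_q - \varepsilon_s$. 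The main obstacle is a bookkeeping one rather than a conceptual one: one must make sure the approximate map produced by quasidiagonality can be realized exactly inside one of the designated corners $R_{n}\mathcal{B}(H)R_{n}$ with values in the countable dense sets $D_{n}$, which forces a small further perturbation of $\varphi$ and a correspondingly small loss in all the estimates — this is why $\delta$ must be chosen strictly smaller than the slack $\varepsilon - \varepsilon_s$ from the outset, and why it is harmless that $\psi_s$ need not be linear or self-adjoint.
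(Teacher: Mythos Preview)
Your proposal is correct and follows essentially the same approach as the paper: use quasidiagonality on $F_s = F_p \cup F_q$ to obtain an approximately multiplicative u.c.p.\ map into $M_m(\C)$, identify this matrix algebra with the corner $(R_{n_p+m}-R_{n_p})\mathcal{B}(H)(R_{n_p+m}-R_{n_p})$, and block-diagonally extend $\psi_p$ and $\psi_q$ (which agree on the overlap) by this map, perturbing slightly to land in $D_{n_s}$. The paper's sketch fixes the specific value $\epsilon_s = \epsilon_p/8$ rather than an arbitrary rational below $\epsilon$, but otherwise your argument is the same and in fact spells out more of the verification than the paper does.
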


To see this, for $\epsilon_s = \epsilon_p/8$ and $F_s=F_p\cup F_q$, let $m\in\N$ and  $\phi: F_s\rightarrow M_m(\C)$ be given as in Definition \ref{uqddef}. By setting $n_s=n_p +m$, identifying $M_m(\C)$ with the corner $(R_{n_s}-R_{n_p})\mathcal{B}(H)(R_{n_s}-R_{n_p})$ and approximating $\phi$ via the dense sets up to $\epsilon_s$, define a map $\psi_s$ which block-diagonally extends both $\psi_p$ and $\psi_q$ via this approximation of $\phi$. In this manner, the resulting condition $s = (F_s,n_s,\epsilon_s, \psi_s)\in {\mathbb{Q}\mathbb{D}}_A$ extends both $p$ and $q$.

The previously described argument also gives the basic idea of how to extend a given condition
(allowing also to enlarge the domain) by diagonally adjoining a finite-dimensional block in which, modulo a small error, all algebraic operations and the norm of all elements are preserved. This hints that a generic filter
induces (analogously to the case of Boolean algebras in the previous subsection; see also
Proposition \ref{genemb}) a map from $A$ into $\cQ(H)$
which is an isometric (and thus injective) $\ast$-homomorphism.

\section{The General Case}\label{sctn4}
In this section we proceed to define the forcing notion $\mathbb{E}_A$ and give the proof of Theorem~\ref{T1}.

\subsection{The Definition of the Poset} \label{subsctn4}
For what follows let $A$ be a simple, unital \cstar-algebra. We begin by fixing
an increasing
countable family of projections $\mathcal{P} \subseteq \mathcal{B}(H)$ 
converging strongly to the identity and
a countable dense subset $C$ 
of $\Bf(H)^{\le 1}_+$.
For  $R \in \mathcal{P}$
and $h \in C$ let $S_{R,h}$ be the orthogonal projection onto the span of $h^+[H]\cup R[H]$.
Fix a countable dense subset
\[
D_{R,h}\subseteq \set{S_{R,h} T h^+: T\in  \mathcal{B}(H)}
\]
 that contains $h^+$. We need the dense sets $D_{R,h}$ and $C$ to satisfy certain closure properties in order to carry out the arguments below. We
explicit these properties in detail here, but the reader can safely
ignore them for now and come back to them
when reading the proof of Proposition~\ref{prop:1}.

\begin{definition} \label{dense}
The countable sets $C$ and $D_{R,h}$ previously defined are required to have the following closure properties. 
\begin{enumerate}
\item \label{dense1}
For all $c_1, \dots, c_k \in C$ and $R \in \mathcal{P}$,
 the intersection of $C$ with the set  (recall that  $h\gg c$ stands for  $hc=c$)
\[
\set{h \in \Bf(H)_+^{\le 1} : h \gg c_1, \dots,  h \gg c_k,  h \ge R}
\]
is dense in the latter. 
\item \label{dense2}
Given $R \in \mathcal{P}$ and $h,k \in C$, the intersection 
of $D_{R,h}$ with the set
\[
 \set{T \in S_{R,h} \mathcal{B}(H)h^+ : Tk^-[H] \subseteq h^-[H],  Th^-[H] \subseteq h^+[H]}
\]
is dense in the latter. 
\item \label{dense3}
Given $R, R' \in \mathcal{P}$, 
 $h_1, h_2 ,k \in C$, and $T' \in D_{R',h_2}$, 
  the intersection of $D_{R,h_1}$
 with the set
\begin{align*}
\set{T \in S_{R,h_1}\mathcal{B}(H) h_1^+ :& Th_1^+ = T' ,  h_2^-T =
h_2^-T' ,\\ &T k^-[H] \subseteq h_1^-[H], Th_1^-[H] \subseteq h_1^+[H]}
\end{align*}
is dense in the latter.
\end{enumerate}
\end{definition}

It is straightforward to build countable dense sets with such properties by 
countable iteration.\footnote{A logician 
can use a large enough countable 
elementary submodel of a sufficiently large hereditary set containing all the relevant objects as a parameter 
to outright define these sets.}
This idea appears in \cite{Wof:Set}, where ccc forcing was  used to study the poset of projections in the 
Calkin algebra.

Before proceeding to the definition of the poset, we pause to give some insight  and justify the considerably higher complexity it possesses when
compared with the abelian or quasidiagonal  case.
The rough idea is, again,
to define a poset where each condition represents a partial map from a finite subset
of $A$ into some finite-dimensional corner of $\cB(H)$ and where the ordering guarantees
that stronger conditions behave like $*$-homomorphisms on larger and larger subspaces
of $H$ up to an error which tends to zero. The countable, dense sets $D_{R,h}$ considered in the beginning of this section serve as the codomains of these partial maps and, as a result, for any finite subset of $A$ there are only countable many possible maps into any given corner.
The main difference with the quasidiagonal case
is that we cannot expect conditions to look like block-diagonal matrices anymore.
This has troublesome consequences,
mostly caused by the multiplication (and to a minor extent 
by the adjoint operation). The main issue is that, given $p< q$, one cannot expect that a property similar to condition \ref{item:qd} of Definition \ref{qddef}, that is
\[
R_{n_q}\psi_p(a)(1-R_{n_q}) =  (1-R_{n_q})\psi_p(a) R_{n_q} =  0,
\]
can hold in general. As a first consequence (and with the comments succeeding Definition \ref{qddef} in mind), even defining a partial order that is transitive proves to be non-trivial. An even bigger
issue that comes up  is the extension of a condition 
to a stronger one with larger domain.
While in the quasidiagonal case it is sufficient to add a finite-dimensional block
with some prescribed properties, completely ignoring how $\psi_p$ is defined, in the general case
one has to explicitly require for $\psi_p$ to allow at least one extension in order to avoid~$\mathbb{E}_A$ having atomic conditions\footnote{Given a poset $(P,<)$, $p \in P$ is \emph{atomic} if $q \le p$ implies $q=p$.}. To this end, the poset $\mathbb{E}_A$ is defined as follows:

\begin{definition} \label{def:1}
Let $\mathbb{E}_A$ be the set of the tuples
\[
p = (F_p, \epsilon_p, h_p, R_p, \psi_p)
\]
where
\begin{enumerate}
\item $F_p \Subset A$, $1 \in F_p$ and if $a \in F_p$ then $a^* \in F_p$, 
\item $ \epsilon_p \in \mathbb{Q}^+$,
\item $h_p \in C$,
\item $R_p \in \mathcal{P}$,
\item $\psi_p: F_p \to D_{R_p,h_p}$ and there exist a faithful, essential, unital $*$-homomorphism
$\Phi_p : C^\ast(F_p) \to \mathcal{B}(H)$ and a projection $k_p \le h_p^-$ such that for all
$a \in F_p$ \label{promise}
\begin{enumerate}
\item $k_p = k^-$ for some $k \in C$,
\item $\psi_p(1) = h^+_p$,
\item \label{itemc} $\lVert (\psi_p(a) - \Phi_p(a))(h_p^+ - k_p) \rVert < \frac{\epsilon_p}{3M_p}$, where
\begin{align*}
L(F_p) = \max \set{\lvert \lambda \rvert : \lambda \in \C\ \text{and} \ \exists &\mu \in \C, \  \exists a,b \in F_p\\
 &\text{s.t.} \  a\neq 0 \ \text{and}  \      \lambda a + \mu b \in F_p}
\end{align*}
and
\[
M_p = \max \set{3\lVert a \rVert, 3\lVert \psi_p(a) \rVert, L(F_p) : a \in F_p},
\]
\item $\lVert \psi_p(a) + \Phi_p(a)(1- h_p^+) \rVert < \frac{3}{2} \lVert a \rVert$,
\item $ \psi_p(a) k_p[H] \subseteq h_p^-[H] $ and  $ \psi_p(a) h_p^-[H] \subseteq h_p^+[H]$,
\label{relf}
\item \label{relg} $ \Phi_p(a) k_p[H] \subseteq h_p^-[H]$ and $ \Phi_p(a) h_p^-[H] \subseteq h_p^+[H]$.
\end{enumerate}
\pushcounter
\end{enumerate}

Such pair $(k_p, \Phi_p)$  will henceforth be referred to as a \emph{promise} for the condition $p$.

Given $p, q \in \mathbb{E}_A$, we say that \textit{$p$ is stronger than $q$} and write $p < q$ if and only if
\begin{enumerate}
\popcounter
\item $F_p \supseteq F_q$, \label{rel:1}
\item $\epsilon_p < \epsilon_q$ \label{rel:2}
\item $h_p \gg h_q$, \label{rel:3}
\item $R_p\geq R_q$, \label{rel:4}
\item $\psi_p(a) h_q^+ = \psi_q(a)$ for all $a \in F_q$, \label{rel:5}
\item $h_q^- \psi_p(a) = h_q^- \psi_q(a)$ for all $a \in F_q$, \label{rel:6}
\item  \label{rel:7}
\begin{enumerate}
\item $\lVert \Delta^{p,+}_{a,b,\lambda,\mu}(h_p^- - h_q^-) \rVert
< \epsilon_q - \epsilon_p$ for $a,b,\lambda a + \mu b \in F_q$, \label{rel:a}
\item $\lVert \Delta^{p,*}_a(h_p^- - h_q^-) \rVert < \epsilon_q - \epsilon_p$
for $a \in F_q$, \label{rel:b}
\item $\lVert \Delta^{p,\cdot}_{a,b}(h_p^- - h_q^-) \rVert < \epsilon_q - \epsilon_p$
for $a,b,ab \in F_q$, \label{rel:c}
\end{enumerate}
where the quantities $\Delta^{p,+}_{a,b,\lambda,\mu}$, $\Delta^{p,*}_a$  and $\Delta^{p,\cdot}_{a,b}$ are defined as in Definition  \ref{qddef}.
\end{enumerate}

\end{definition}
Item \ref{relf} above is an example of how the problem of transitivity is addressed and this becomes clear in Claim \ref{claim*} of the next proposition. The promise in item \ref{promise} is witnessing that there is at least one way to extend $p$
(via $\Phi_p$) to conditions with arbitrarily large (finite-dimensional) domain. We will see later
(see Propositions \ref{prop:1}, \ref{lemma:3} and \ref{propK}) how Theorem \ref{lemma:2} and Corollary \ref{voic} imply that the choice of a specific
$\Phi_p$ is not a real constraint on how extensions of $p$ are going to look like.

\begin{proposition}
The relation $<$ defined on $\mathbb{E}_A$ is transitive.
\end{proposition}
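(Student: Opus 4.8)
The plan is to fix arbitrary conditions $p<q$ and $q<s$ in $\mathbb{E}_A$ and to verify, one clause at a time, the relations \ref{rel:1}--\ref{rel:7} of Definition~\ref{def:1} for the pair $(p,s)$. Clauses \ref{rel:1}, \ref{rel:2} and \ref{rel:4} are immediate from transitivity of $\supseteq$, of $<$ on $\mathbb{Q}^+$, and of $\ge$ on $\mathcal{P}$, while \ref{rel:3} follows from $h_p h_s = h_p(h_q h_s) = (h_p h_q)h_s = h_q h_s = h_s$. The first substantive step is to record the inclusions among the projections attached to the three conditions. Since $h_p\gg h_q$ and $h_q\gg h_s$ (so $h_p h_q=h_q$, $h_q h_s = h_s$, and by self-adjointness also $h_q h_p = h_q$, $h_s h_q = h_s$), the range of $h_q$ consists of $1$-eigenvectors of $h_p$ and the range of $h_s$ of $1$-eigenvectors of $h_q$; hence, as projections,
\[
h_s^+\le h_q^-\le h_q^+
\qquad\text{and}\qquad
h_s^-\le h_q^-\le h_p^-.
\]
In particular $h_p^--h_q^-$ and $h_q^--h_s^-$ are mutually orthogonal subprojections of $h_p^--h_s^-$ whose sum is $h_p^--h_s^-$.

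The workhorse of the argument is a \emph{coherence identity}: for every $c\in F_q$,
\[
\psi_p(c)(h_q^- - h_s^-) = \psi_q(c)(h_q^- - h_s^-)
\qquad\text{and}\qquad
(h_q^- - h_s^-)\psi_p(c) = (h_q^- - h_s^-)\psi_q(c).
\]
The first equality comes from inserting $h_q^+$ on the right (it fixes $(h_q^- - h_s^-)[H]\subseteq h_q^-[H]\subseteq h_q^+[H]$) and applying clause \ref{rel:5} for $p<q$; the second from inserting $h_q^-$ on the left (absorbed by $h_q^- - h_s^-$) and applying clause \ref{rel:6} for $p<q$. Clauses \ref{rel:5} and \ref{rel:6} for $(p,s)$ then follow by the same sort of manipulation: for $a\in F_s$ one has $\psi_p(a)h_s^+ = \psi_p(a)h_q^+h_s^+ = \psi_q(a)h_s^+ = \psi_s(a)$ and $h_s^-\psi_p(a) = h_s^-h_q^-\psi_p(a) = h_s^-h_q^-\psi_q(a) = h_s^-\psi_s(a)$, using clauses \ref{rel:5}, \ref{rel:6} for $p<q$ and then for $q<s$.

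For clause \ref{rel:7} I would decompose $h_p^- - h_s^-$ as $(h_p^- - h_q^-) + (h_q^- - h_s^-)$ and bound each of the three $\Delta$-norms by the triangle inequality across this splitting. On $h_p^- - h_q^-$ the bound $<\epsilon_q-\epsilon_p$ is exactly clause \ref{rel:7} for $p<q$. On $h_q^- - h_s^-$ the goal is to prove $\Delta^{p,\bullet}(h_q^- - h_s^-) = \Delta^{q,\bullet}(h_q^- - h_s^-)$ for each marker $\bullet\in\{+,*,\cdot\}$ and then invoke clause \ref{rel:7} for $q<s$, getting $<\epsilon_s-\epsilon_q$; the two contributions add up to $<\epsilon_s-\epsilon_p$, as required. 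For the additive $\Delta^{p,+}_{a,b,\lambda,\mu}$ this reduction is a term-by-term application of the coherence identity (all of $a,b,\lambda a+\mu b$ lie in $F_s\subseteq F_q$); for the adjoint $\Delta^{p,*}_a$ one also transposes, writing $\psi_p(a)^*(h_q^- - h_s^-) = \bigl((h_q^- - h_s^-)\psi_p(a)\bigr)^*$ and using the second half of the identity.

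The multiplicative term $\Delta^{p,\cdot}_{a,b}(h_q^- - h_s^-) = \psi_p(ab)(h_q^- - h_s^-) - \psi_p(a)\psi_p(b)(h_q^- - h_s^-)$ is the delicate point, and it is here that clause \ref{relf} in the definition of a condition is used (as anticipated in the discussion right after Definition~\ref{def:1}). First rewrite $\psi_p(b)(h_q^- - h_s^-) = \psi_q(b)(h_q^- - h_s^-)$ by the coherence identity; the range of this operator lies in $\psi_q(b)h_q^-[H]\subseteq h_q^+[H]$ by clause \ref{relf} for $q$, so the outer factor $\psi_p(a)$ may be pre-composed with $h_q^+$ and hence, by clause \ref{rel:5} for $p<q$, replaced by $\psi_q(a)$; thus $\psi_p(a)\psi_p(b)(h_q^- - h_s^-) = \psi_q(a)\psi_q(b)(h_q^- - h_s^-)$. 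Combined with $\psi_p(ab)(h_q^- - h_s^-) = \psi_q(ab)(h_q^- - h_s^-)$, this yields $\Delta^{p,\cdot}_{a,b}(h_q^- - h_s^-) = \Delta^{q,\cdot}_{a,b}(h_q^- - h_s^-)$. I expect this multiplicative step — specifically the check that the relevant range is contained in $h_q^+[H]$, so that the principle ``$\psi_p$ agrees with $\psi_q$ on a sufficiently large subspace'' applies to $\psi_p(a)$ — to be the main obstacle; everything else reduces to bookkeeping with the nested commuting projections fixed at the start.
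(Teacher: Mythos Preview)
Your proposal is correct and follows essentially the same approach as the paper: decompose $h_p^--h_s^-$ as $(h_p^--h_q^-)+(h_q^--h_s^-)$, bound the first piece by clause~\ref{rel:7} for $p<q$, and on the second piece reduce $\Delta^{p,\bullet}$ to $\Delta^{q,\bullet}$ using clauses~\ref{rel:5}, \ref{rel:6} and, for the product, item~\ref{relf}. Your packaging via the ``coherence identity'' is a slight organizational streamlining of the paper's argument, but the content is the same.
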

\begin{proof}
Let $p,q,s \in \mathbb{E}_A$ be such that $p < q < s$.
It is straightforward to check that conditions \ref{rel:1}--\ref{rel:4} hold between $p$ and $s$.
Items \ref{rel:5} and \ref{rel:6} follow since $ h_q \gg h_s$ implies $ h_q^- \ge h_s^+$.
We recall that for two projections $p, q$ the relation $p \le q$ is equivalent to $pq = qp = p$.
We divide the proof of condition \ref{rel:7} in three claims, one for each item.
\begin{claim}
If $a,b,\lambda a + \mu b \in F_s$ then
$\lVert \Delta^{p,+}_{a,b,\lambda,\mu}(h_p^- - h_s^-) \rVert < \epsilon_s- \epsilon_p$.
\end{claim}
\begin{proof}
We have
\[
\lVert \Delta^{p,+}_{a,b,\lambda,\mu}(h_p^- - h_s^-) \rVert
\le
\lVert \Delta^{p,+}_{a,b,\lambda,\mu}(h_p^- - h_q^-) \rVert
+ \lVert \Delta^{p,+}_{a,b,\lambda,\mu}(h_q^- - h_s^-) \rVert.
\]
Since $p < q < s$, we know that $\psi_p(c)h_q^+ = \psi_q(c)$ for all $c \in F_q$ (item \ref{rel:5})
and thus $\lVert \Delta^{p,+}_{a,b,\lambda,\mu}(h_q^- - h_s^-) \rVert =
\lVert \Delta^{q,+}_{a,b,\lambda,\mu}(h_q^- - h_s^-) \rVert $. Hence
we can conclude
\[
\lVert \Delta^{p,+}_{a,b,\lambda,\mu}(h_p^- - h_q^-) \rVert +
\lVert \Delta^{p,+}_{a,b,\lambda,\mu}(h_q^- - h_s^-) \rVert < \epsilon_q - \epsilon_p + \epsilon_s - \epsilon_q = \epsilon_s - \epsilon_p,
\]
as required.
\end{proof}

\begin{claim}
If $a \in F_s$ then
$\lVert \Delta^{p,*}_a(h_p^- - h_s^-) \rVert < \epsilon_s - \epsilon_p$.
\end{claim}
\begin{proof}
We have
\[
\lVert \Delta^{p,*}_a(h_p^- - h_s^-) \rVert \le \lVert \Delta^{p,*}_a(h_p^- - h_q^-) \rVert
+ \lVert \Delta^{p,*}_a(h_q^- - h_s^-) \rVert.
\]
Since $p < q < s$, for all $c \in F_q$
we have that $\psi_p(c)h_q^+ = \psi_q(c)$ and that  $h_q^- \psi_p(c)
= h_q^- \psi_q(c)$ (items \ref{rel:5} and \ref{rel:6}). The latter relation entails that $\psi_p(c)^* h_q^- = \psi_q(c)^* h_q^-$. Thus, we conclude 
\begin{align*}
\lVert \Delta^{p,*}_a(h_p^- - h_q^-) \rVert
+ \lVert \Delta^{p,*}_a(h_q^- - h_s^-) \rVert &= \lVert \Delta^{p,*}_a(h_p^- - h_q^-) \rVert
+ \lVert \Delta^{q,*}_a(h_q^- - h_s^-) \rVert \\
&< \epsilon_s - \epsilon_p, 
\end{align*}
as required. 
\end{proof}
\begin{claim} \label{claim*}
If $a,b,ab \in F_s$ then
$\lVert \Delta^{p,\cdot}_{a,b}(h_p^- - h_s^-) \rVert < \epsilon_s - \epsilon_p$.
\end{claim}
\begin{proof}
We have
\begin{align*}
\lVert \Delta^{p,\cdot}_{a,b}(h_p^- - h_s^-) \rVert  
&\le \lVert \Delta^{p,\cdot}_{a,b}(h_p^- - h_q^-) \rVert +
\lVert \Delta^{p,\cdot}_{a,b}(h_q^- - h_s^-) \rVert \\
& <
 \epsilon_q - \epsilon_p + \lVert \Delta^{p,\cdot}_{a,b}(h_q^- - h_s^-) \rVert.
\end{align*} 
Since $\psi_p(c) h_q^+ = \psi_q(c)$
for all $c \in F_q$ (item \ref{rel:5}) we get
\[
(\psi_p(a  b) - \psi_p(a) \psi_p(b))(h_q^- - h_s^-) = (\psi_q(a  b) - \psi_p(a) \psi_q(b))(h_q^- - h_s^-)
\]
and therefore $(\psi_p(a  b) - \psi_p(a) \psi_p(b))(h_q^- - h_s^-)$ is equal to
\[
  \Delta^{q,\cdot}_{a,b}(h_q^- - h_s^-) +
(\psi_q(a) - \psi_p(a))\psi_q(b)(h_q^- - h_s^-).
\]
The rightmost term is zero since
$ \psi_q(b) \xi \in h_q^+[H]$ for all $\xi \in h_q^-[H]$ (item \ref{relf}) and
$\psi_p(a) h_q = \psi_q(a) h_q$ (this follows from item \ref{rel:5}).
This ultimately leads to the thesis since
$\lVert \Delta^{q,\cdot}_{a,b}(h_q^- - h_s^-) \rVert  < \epsilon_s - \epsilon_q$.
\end{proof}
This completes the proof.
\end{proof}

\subsection{Density and the Countable Chain Condition}\label{subsctn5}
As in Definition \ref{def:1}, for $F\Subset A$, let 
\begin{align*}
L(F) = \max\{|\lambda |:\lambda\in\C \ \text{and} \ \exists\mu\in\C,&\exists a,b\in F \\ &\text{s.t. } a\neq 0 \text{ and } \lambda a+\mu b\in F\}
\end{align*}
and
\[
J(F) = \max \set{ \lVert a \rVert : a \in F}.
\]
For $p\in\mathbb{E}_A$, let 
\[
M_p = \max\{3\lVert a\rVert,3\lVert \psi_p(a)\rVert,L(F_p) : a\in F_p \}.
\]
For $F\Subset A$ and $p\in\mathbb{E}_A$ let
\[
M(p,F) = 3\max\{3M_p +1,L(F), 2J(F)+1\}.
\]
Finally, for $p\in\mathbb{E}_A$ and a fixed promise $(k_p,\Phi_p)$ for the condition $p$, define the  constants
\[
N(p,\Phi_p) = \max\{\lVert (\psi_p(a)-\Phi_p(a))(h_p^+-k_p)\rVert : a\in F_p\}
\]
and
\[
D(p,\Phi_p) = \min\{3\lVert a\rVert/2 - \lVert \psi_p(a)+\Phi_p(a)(1-h_p^+)\rVert : a\in F_p\}.
\]
The main density result reads as follows:

\begin{proposition} \label{prop:1}
Given $F \Subset A$, $\epsilon \in \mathbb{Q}^+$,
$h \in C$ and $R \in \mathcal{P}$, the set
\[
\mathcal{D}_{F,\epsilon,h,R} = \set{p \in \mathbb{E}_A : F_p \supseteq F,
\epsilon_p \le \epsilon, h_p \gg h, R_p \ge R}
\]
is open dense in $\mathbb{E}_A$.
\end{proposition}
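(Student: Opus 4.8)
The plan is to verify the two requirements — \emph{open} and \emph{dense} — separately, with density being the substantial part. Openness is immediate: if $p \in \mathcal{D}_{F,\epsilon,h,R}$ and $q < p$, then $F_q \supseteq F_p \supseteq F$, $\epsilon_q < \epsilon_p \le \epsilon$, $h_q \gg h_p \gg h$ (using transitivity of $\gg$, which follows from $h_q h_p = h_p$ and $h_p h = h$), and $R_q \ge R_p \ge R$; so $q \in \mathcal{D}_{F,\epsilon,h,R}$.

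For density, fix an arbitrary $p \in \mathbb{E}_A$ and a promise $(k_p, \Phi_p)$ for it; I must produce $q < p$ with $q \in \mathcal{D}_{F,\epsilon,h,R}$. The strategy is to enlarge all four data simultaneously in one step. First I would set $F_q = F_p \cup F \cup F^* \cup \{1\}$ (closing under adjoints), pick $\epsilon_q \in \mathbb{Q}^+$ with $\epsilon_q < \epsilon_p$ and $\epsilon_q \le \epsilon$ small enough to absorb all the error terms that will appear, and extend $\Phi_p$ to a faithful, essential, unital representation $\Phi_q : C^\ast(F_q) \to \mathcal{B}(H)$ of the larger generated algebra (since $A$ is simple and unital, every unital representation of $C^\ast(F_q)$ on $H$ is automatically faithful and essential, and one exists extending $\Phi_p$ up to the relevant approximation by Voiculescu's theorem). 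The core of the construction is then to choose the new $h_q \in C$ with $h_q \gg h_p$, $h_q \ge R$ (using closure property \eqref{dense1} of $C$, applied to the finitely many elements of $C$ that need to sit way-below $h_q$, together with $R_p$ and $R$), set $R_q = R \vee R_p \in \mathcal{P}$ — or more precisely any member of $\mathcal{P}$ above both, using that $\mathcal{P}$ is increasing and converges strongly to $1$ — and then define $\psi_q : F_q \to D_{R_q, h_q}$ so that $\psi_q(a) = \psi_p(a)$ on the "old part" $h_p^+$ (this is forced by item~\eqref{rel:5}) and so that on the complementary block $h_q^+ - h_p^+$ the map $\psi_q(a)$ closely approximates $\Phi_q(a)$, suitably compressed. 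Concretely, I would want $\psi_q(a)$ to be a $D_{R_q,h_q}$-approximation to $\psi_p(a) h_p^+ + S_{R_q,h_q}\Phi_q(a)(h_q^+ - h_p^+)$ or similar, using closure properties \eqref{dense2} and \eqref{dense3} to ensure both that the approximants land in $D_{R_q,h_q}$ and that the subspace conditions \eqref{relf} ($\psi_q(a) k_q[H] \subseteq h_q^-[H]$ and $\psi_q(a) h_q^-[H] \subseteq h_q^+[H]$) hold, where $k_q = k_p$ works as the witness projection (noting $k_p \le h_p^- \le h_q^-$).

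Having made these choices, I would check the three families of requirements. (i) That $q \in \mathbb{E}_A$: here the promise $(k_p, \Phi_q)$ (or $(k_q,\Phi_q)$ with $k_q = k_p$) witnesses membership; items \eqref{itemc}, the $\tfrac32\|a\|$-bound, and \eqref{relf}--\eqref{relg} all follow from the defining approximation of $\psi_q$ by $\Phi_q$ on the appropriate blocks, provided $\epsilon_q$ was chosen small relative to $N(p,\Phi_p)$, $D(p,\Phi_p)$, $M_p$, and the new data (this is why the constants $M(p,F)$, $N$, $D$ were isolated before the statement). (ii) That $q < p$: items \eqref{rel:1}--\eqref{rel:6} are built in by construction; item \eqref{rel:7}, the $\Delta$-estimates on the block $h_p^- - h_q^-$, is where the bulk of the computation lives — since on that block $\psi_q$ agrees up to $\epsilon_q$-type errors with the genuine $\ast$-homomorphism $\Phi_q$, the defects $\Delta^{q,+}$, $\Delta^{q,*}$, $\Delta^{q,\cdot}$ are small, and choosing $\epsilon_q < \epsilon_p$ with the gap $\epsilon_p - \epsilon_q$ exceeding these errors finishes it. (iii) Finally $F_q \supseteq F$, $\epsilon_q \le \epsilon$, $h_q \gg h$, $R_q \ge R$ hold by the choices above, so $q \in \mathcal{D}_{F,\epsilon,h,R}$.

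The main obstacle I anticipate is purely bookkeeping-versus-genuine: one must simultaneously satisfy the \emph{subspace containment} conditions \eqref{relf} (which constrain $\psi_q$ to respect the decomposition of $H$ coming from $h_q^+, h_q^-, k_q$) \emph{and} keep $\psi_q$ inside the prescribed countable dense set $D_{R_q,h_q}$ \emph{and} keep it $\epsilon_q$-close to $\Phi_q$ — and the whole point of the elaborate closure properties in Definition~\ref{dense} is to guarantee these three demands are mutually compatible. The delicate step is therefore selecting the order of operations so that, e.g., property~\eqref{dense3} is invoked with exactly the right parameters $(R, R', h_1, h_2, k, T')$ to produce an approximant of $\Phi_q(a)$ that simultaneously matches $\psi_p(a)$ on $h_p^+$, matches $h_p^-$-compressions correctly, and respects the $k_q[H] \subseteq h_q^-[H]$, $h_q^-[H] \subseteq h_q^+[H]$ requirements. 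Beyond navigating those closure lemmas, the estimates are routine triangle-inequality arguments calibrated against the constants $N(p,\Phi_p)$, $D(p,\Phi_p)$, $M(p,F)$, so I would defer them.
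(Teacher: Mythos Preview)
Your overall strategy is right, but there are two genuine gaps that are not mere bookkeeping.

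First, the claim that ``$k_q = k_p$ works as the witness projection'' fails. The promise for $q$ requires (item~\ref{itemc}) that $\lVert(\psi_q(a)-\Phi_q(a))(h_q^+-k_q)\rVert < \epsilon_q/(3M_q)$. If $k_q=k_p$, then the block $h_q^+-k_p$ contains the old block $h_p^+-k_p$, on which the error is $N(p,\Phi_p)$ --- a \emph{fixed} positive number inherited from $p$ that you cannot shrink. Since you must also arrange $\epsilon_q\le\epsilon$ with $\epsilon\in\mathbb{Q}^+$ arbitrary, you cannot in general force $\epsilon_q/(3M_q)$ below this fixed error. The paper's remedy is to take a \emph{new} $k_q\ge h_p^+$ (in fact $k_q=k^-$ for some $k\gg h_p$), so that $h_q^+-k_q$ lies entirely in the freshly constructed region, where the approximation by $\Phi_q$ can be made as tight as one likes.

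Second, and more substantially, you treat the choice of $h_q$ as a free application of closure property~\eqref{dense1}, but the promise also demands (item~\ref{relg}) that $\Phi_q(a)k_q[H]\subseteq h_q^-[H]$ and $\Phi_q(a)h_q^-[H]\subseteq h_q^+[H]$ for every $a\in F_q$. A representation produced by Voiculescu's theorem has no reason to respect any prescribed finite-rank filtration of $H$, and the closure properties of the sets $D_{R,h}$ govern only $\psi_q$, not $\Phi_q$. The paper handles this by an interleaved construction: choose $k_q$ first; let $T$ be the projection onto $\Span\{\Phi(a)k_q[H]:a\in F_q\}$; pick $l\in C$ close to $T$ and invoke Lemma~\ref{lemma:1} to conjugate $\Phi$ by a unitary that is a finite-rank perturbation of the identity, forcing $\Phi'(a)k_q[H]\subseteq l[H]$; then set $h_q^-=l^+$ and repeat the procedure one level up to force $\Phi_q(a)h_q^-[H]\subseteq h_q^+[H]$. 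This two-step ``unitary correction'' is precisely why Lemma~\ref{lemma:1} exists, and it determines $h_q$ --- you do not get to pick $h_q$ before fixing $\Phi_q$ and $k_q$.
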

\begin{proof}
Clearly $\mathcal{D}_{F,\epsilon,h,R}$ is open.
Fix a condition
$q = (F_q, \epsilon_q, h_q, R_q, \psi_q)$ and
let $(k_q,\Phi_q)$ be a promise for the condition $q$.
By item \ref{itemc} of Definition \ref{def:1} there is a $\delta$ such that
\[
N(q,\Phi_q) < \delta
< \frac{\epsilon_q}{3M_q}
\]
Fix moreover a \emph{small enough} $\gamma$, more precisely
\[
\gamma \le \min \set{\epsilon, \epsilon_q - 3M_q \delta, D(q,\Phi_q)}.
\]
Let $F_p = F_q \cup F \cup F^*$. Applying Theorem \ref{lemma:2}, let $\Phi$
be a faithful, essential, unital representation of $C^\ast(F_p)$ such that
\[
\lVert \Phi_{\restriction F_q} - \Phi_{q \restriction F_q} \rVert < \frac{\gamma}{36M}
\]
with $M = M(q,F_p)$.
Consider, by condition \ref{dense1} of Definition \ref{dense}, an operator $k \in C$ such that $k \gg h, k \gg h_q, k \gg R_q$ and denote $k^-$ by $k_p$.
Let $T$ be the finite-rank projection onto the space spanned by the  set $\set{\Phi(a)k[H]: a \in F_p}$. By item \ref{dense1} of Definition \ref{dense}, since $T \gg k$,
we can choose $l \in C$ such that $l \gg k$ and $l \approx_{\frac{\gamma}{18M}} T$. Moreover,
by Lemma \ref{lemma:1}, picking $l$ closer to $T$ if needed, there is a unitary
$u\in\mathcal{U}(H)$ such that 
\begin{enumerate}
\item $u$ is a compact perturbation of the identity,
\item $uT[H] \subseteq l[H]$,
\item $u$ is the identity on $k_p[H]$ (since $l \gg k_p$),
\item $\lVert (\Ad u (\Phi(a))  - \Phi(a))k_p \rVert < \frac{\gamma}{36M}$ for all $a \in F_p$.\pushcounter
\end{enumerate}
This entails that $\Phi' = \Ad u \circ \Phi $
is such that $\Phi'(a)k_p[H] \subseteq l[H]$ and
\[
\lVert (\Phi'(a) - \Phi_q(a) ) k_p \rVert < \frac{\gamma}{18M}
\]
for all $a \in F_q$. Let $Q$ be the finite-rank projection onto the space spanned by the set $\set{\Phi'(a)l[H] : a \in F_p}$ and let $K$ be the finite-rank
operator equal to the identity on $l[H]$, equal to $\frac{1}{2}\Id$ on $Q(H) \cap {l[H]}^\perp$
(remember that $Q \ge l^+$ since $1 \in F_p$) and equal to zero on ${Q[H]}^\perp$. By item \ref{dense1} of Definition \ref{dense} there is $h_p \in C$ such that $h_p \gg l$ and
$h_p \approx_{\frac{\gamma}{15M}} K$. Moreover, by picking $h_p$ closer to $K$ if necessary, we
may assume that $\text{dim}(h_p Q[H]) = \text{dim}(Q[H])$ and that $h_p^- = l^+$. The first equality
can be obtained with the argument exposed at the beginning of the proof of Lemma \ref{lemma:1},
while the second is as follows: Suppose $\xi \in {l[H]}^\perp$ is
a norm one vector, then $\xi = \xi_1 + \xi_2$, where
$\xi_1$ and $\xi_2$ are orthogonal vectors of norm
smaller than 1 such that $K \xi_1 = \frac{1}{2} \xi_1$
and $K \xi_2 = 0$. Hence, if $h_p$ is close enough to $K$ it follows that $\lVert h_p \xi \rVert < 1$.
The equality $\text{dim}(h_p Q [H]) = \text{dim}(Q[H])$ allows us to find a unitary $v$ such that
\begin{enumerate}
\popcounter
\item $v$ is a compact perturbation of the identity,
\item $v$ sends $Q[H]$ in $h_p[H]$,
\item $v$ is the identity on $l[H]$.
\pushcounter
\end{enumerate}
The representation $\Phi_p = (\Ad v)\circ \Phi' $ is such that
\begin{enumerate}
\popcounter
\item $\Phi_p(a) k_p[H] \subseteq h_p^-[H]$ for all $a \in F_p$,
\item $\Phi_p(a) h_p^-[H] \subseteq h_p^+[H]$ for all $a \in F_p$,
\item \label{proof10} $\lVert (\Phi_p(a) - \Phi_q(a) ) k_p \rVert < \frac{\gamma}{18M}$ for all $a \in F_q$.
\pushcounter
\end{enumerate}
Let $R_p \in \mathcal{P}$ be such that $R_p \ge R, R_p\geq R_q $ and
\[
\lVert (1 - R_p) \Phi_p(a) h_p^+ \rVert < \frac{\gamma}{18M}
\]
for all $a \in F_p$. Consider now, given $a \in F_q$, the operator
\[
\phi(a) = \psi_q(a) + (1 - h_q^-)\Phi_p(a)(h_p^- - h_q^+) + (1 - h_q^-)R_p\Phi_p(a)(h_p^+ - h_p^-)
\]
and for $a \in F_p \setminus F_q$ the operator
\[
\phi(a) = \Phi_p(a)h_p^- + R_p \Phi_p(a)(h_p^+ - h_p^-).
\]
For all $a \in F_p$ we have $\phi(a) k_p[H] \subseteq h_p^-[H]$ and
$\phi(a) h_p^-[H] \subseteq h^+_p[H]$. Moreover, for $a \in F_q$ we also have
$\phi(a) h_q^+ = \psi_q(a)$ and $h_q^- \phi(a) = h_q^- \psi_q(a)$. Let
$\psi_p: F_p \to ~D_{R_p,h_p}$ be a function such that:
\begin{enumerate}
\popcounter
\item \label{psi1} $\psi_p(1) = h_p^+$,
\item \label{psi2} for all $a\in F_p$,  $\psi_p(a) \approx_{\frac{\gamma}{18M}} \phi(a)$ and we also require that
\begin{enumerate}
\item $\psi_p(a) k_p[H] \subseteq h_p^-[H]$ and $\psi_p(a) h_p^-[H] \subseteq h^+_p[H]$ for all $a\in F_p$,
\item $\psi_p(a) h_q^+ = \psi_q(a)$ and $h_q^- \psi_p(a) = h_q^- \psi_q(a)$ for all $a\in F_q$.
\end{enumerate}
\end{enumerate}
Such a function $\psi_p$ exists because of the requirements on $D_{R_p,h_p}$
we asked in items \ref{dense2} and \ref{dense3} of Definition \ref{dense}.
\begin{claim} \label{claim:1}
For all $a\in F_p$ we have $\lVert (\psi_p(a) - \Phi_p(a))(h_p^+ - k_p) \rVert < \frac{\gamma}{6M}$.
\end{claim}
\begin{proof}
The inequality is trivially true for $a = 1$. For $a \in F_p \setminus F_q$ we have
\[
\psi_p(a) (h_p^+ - k_p) \approx_{\frac{\gamma}{18M}}
\Phi_p(a)(h_p^- - k_p) + R_p \Phi_p(a)(h_p^+ - h_p^-) \approx_{\frac{\gamma}{18M}}
\Phi_p(a)(h_p^+ - k_p)
\]
since $h_q^+ (h_p^+ - k_p) = 0$, $(h_p^+ - k_p) \ge (h_p^- - h_q^+)$,
$(h_p^+ - k_p) \ge (h_p^+ - h_p^ -)$ and
where the last approximation is a consequence of how we defined $R_p$, in particular of
\[
\lVert (1 - R_p) \Phi_p(a) h_p^+ \rVert < \frac{\gamma}{18M}.
\]
Now let $a \in F_q \setminus \set{1}$. Similarly to the previous case we get
\[
\psi_p(a) (h_p^+ - k_p) \approx_{\frac{\gamma}{9M}}
(1-h_q^-) \Phi_p(a) (h_p^+ - k_p).
\]
By the definition of the promise (item \ref{relg} of Definition \ref{def:1}), we have that $(h_p^+ - h_q^+) \Phi_q(a) h_q^- = 0$. Remember that
by definition of $\Phi_p$ we have
\[
\lVert (\Phi_p(a) - \Phi_q(a) ) k_p \rVert < \frac{\gamma}{18M}.
\]
Use this inequality
and $k_p \ge h_q^-$
to infer that $(h_p^+ - h_q^+) \Phi_p(a) h_q^- \approx_{\frac{\gamma}{18M}} 0$. Since $F_q$ is self-adjoint, we also obtain that
\[
h_q^- \Phi_p(a) (h_p^+ - h_q^+) \approx_{\frac{\gamma}{18M}} 0.
\]
This allows us to conclude that $\psi_p(a)(h_p^+ - k_p)\approx_{\frac{\gamma}{6M}}\Phi_p(a)(h_p^+ - k_p).$
\end{proof}
\begin{claim}
For all $a\in F_p$ we have $\lVert \psi_p(a) + \Phi_p(a)(1 - h^+_p) \rVert < \frac{3}{2} \lVert a \rVert$.
\end{claim}
\begin{proof}
Let $a \in F_p \setminus F_q$. Then we have
\begin{align*}
\psi_p(a) + \Phi_p(a)(1 - h^+_p)  \approx_{\frac{\gamma}{18M}}
\Phi_p(a)h_p^- + R_p &\Phi_p(a)(h_p^+ - h_p^-) \\ + &\Phi_p(a)(1 - h^+_p) 
\approx_{\frac{\gamma}{18M}} \Phi_p(a).
\end{align*}
hence the thesis follows since $\lVert \Phi_p(a) \rVert \le \lVert a \rVert$ and we
can assume $\gamma \le
\lVert a \rVert$. Consider now $a \in F_q$. Since in the previous claim we showed that
\[
h_q^- \Phi_p(a) (h_p^+ - h_q^+) \approx_{\frac{\gamma}{18M}} 0,
\]
we have
\[
\psi_p(a) + \Phi_p(a)(1 - h^+_p)  \approx_{\frac{\gamma}{18M}} \phi(a) + \Phi_p(a)(1 - h^+_p)
\approx_{\frac{\gamma}{9M}} \psi_q(a) + \Phi_p(a) (1 - h^+_q).
\]
Recall that $\Phi_p = (\Ad w)\circ \Phi $, where $w$ is a unitary which behaves like the identity
on $k_p$ (hence on $h_q^+$ and $R_q$ as well), thus $w(1 - h_q^+) = (1 - h_q^+) w$ and $\psi_q(a) = \Ad w (\psi_q(a))$ for all $a \in F_q$.
Moreover $\Phi$ was defined so that
\[
\lVert \Phi_{\restriction F_q} - \Phi_{q \restriction F_q} \rVert < \frac{\gamma}{36M}.
\]
Therefore the following holds
\begin{align*}
\lVert \psi_q(a) + \Phi_p(a) (1 - h^+_q) \rVert &= \lVert \psi_q(a) + \Phi(a) (1 - h^+_q) \rVert \\
&\approx_{\frac{\gamma}{36M}} \lVert \psi_q(a) + \Phi_q(a) (1 - h^+_q) \rVert < \frac{3}{2}
\lVert a \rVert,
\end{align*}
which implies the thesis since $\gamma \le \lVert a \rVert$.
\end{proof}
This finally entails that, letting $\epsilon_p = \frac{\gamma}{6}$,
\[
p = (F_p, \epsilon_p, h_p, R_p, \psi_p)
\]
is an element of $\mathcal{D}_{F, \epsilon, h, R}$. It is in fact straightforward to check
that  if $\gamma$ is small enough, then $M_p \le M = M(q, F_p)$.
We are left with checking that $p < q$. The conditions \ref{rel:1}--\ref{rel:5} in Definition \ref{def:1} follow
from the definition of $p$.

\begin{claim} \label{claim:2}
For all  $a,b,\lambda a + \mu b \in F_q$ we have that
$\lVert (\Delta^{p,+}_{a,b,\lambda,\mu})(h_p^- - h_q^-) \rVert
< \epsilon_q - \epsilon_p$. 
\end{claim}

\begin{proof}
Given $c \in F_q$ we have, by definition of $\delta$ (see the beginning of the proof),
$\lVert (\psi_q(c) - \Phi_q(c))(h_q^+ - k_q) \rVert < \delta$, and the same is true if
we replace $(h_q^+ - k_q)$ with $(h_p^- - h_q^-)$, since $(h_q^+ - k_q) \ge
(h_p^- - h_q^-)$. Moreover, by definition of $\Phi_p$,
 $\lVert (\Phi_p(c) - \Phi_q(c) ) k_p \rVert < \frac{\gamma}{18M}$ holds. This, along
 with the fact that $F_q$ is self-adjoint, $\Phi_q(c) h_q^-[H] \subseteq h_q^+[H]$ (item
 \ref{relg} of Definition \ref{def:1}) and $k_p \ge h_q^+$,
entails that $\lVert h_q^- \Phi_p(c) (h_p^+ - k_p) \rVert < \frac{\gamma}{18M}$.
Therefore 
\[
(\Delta^{p,+}_{a,b,\lambda,\mu})(h_p^- - h_q^-) \approx_{\frac{\gamma}{6}}
(\phi(\lambda a + \mu b) - \lambda \phi(a) - \mu \phi(b)) (h_p^- - h_q^-)
\approx_{3M_q \delta + \frac{\gamma}{3}} 0
\]
as required. 
\end{proof}
\begin{claim} \label{claim:3}
For all  $a \in F_q$
we have $\lVert (\Delta^{p,*}_a)(h_p^- - h_q^-) \rVert < \epsilon_q - \epsilon_p$.
\end{claim}

\begin{proof}
Using approximations analogous to previous claim, we have that
\begin{align*}
 (\Delta^{p,*}_a)(h_p^- - h_q^-)  &\approx_{\frac{\gamma}{9}}
 (\phi(a^*) - \phi(a)^*)(h_p^- - h_q^-)\\
 &  \approx_{\delta + \frac{\gamma}{9}}
 (\Phi_p(a^*) - \psi_q(a)^* -(h_p^- - h_q^+)\Phi_p(a^*)(1 - h_q^-) \\
 &\qquad - (h_p^+ - h_p^-)
\Phi_p(a^*) R_p(1 - h_q^-))(h_p^- - h_q^-). 
\end{align*} 
Since $F_p$ is self-adjoint and by definition of $R_p$
\[
\lVert  h_p^+ \Phi_p(c) (1 - R_p)\rVert < \frac{\gamma}{18M}
\]
for all $c \in F_q$, thus $(h_p^+ - h_p^-) \Phi_p(a^*) R_p(1 - h_q^-) \approx_{\frac{\gamma}{18M}}
(h_p^+ - h_p^-) \Phi_p(a^*) (1 - h_q^-)$.
Hence we obtain
\[
 (\Delta^{p,*}_a)(h_p^- - h_q^-)  \approx_{\delta + 5\frac{\gamma}{18}}
 (\Phi_p(a^*) - \psi_q(a)^* - (h_p^+ - h_q^+) \Phi_p(a^*) (1 - h_q^-))(h_p^- - h_q^-).
\]
Furthermore we have
\begin{align*}
\psi_q(a)^* (h_p^- - h_q^-) = ((h_p^- - h_q^-) \psi_q(a))^* &=  ((h_p^- - h_q^-) \psi_q(a)h_q^+)^* \\
&= ((h_p^- - h_q^-) \psi_q(a)(h_q^+ - k_q))^*,
\end{align*}
where the last equality is a consequence of $\psi_q(c)k_q H \subseteq h_q^- H$ for all $c \in F_q$ (item \ref{relf} of Definition \ref{def:1}). Since
\[
\lVert (\psi_q(c) - \Phi_q(c))(h_q^+ - k_q) \rVert < \delta , \
\lVert (\Phi_p(c) - \Phi_q(c) ) k_p \rVert < \frac{\gamma}{18M},
\]
we get that
\[
 (\Delta^{p,*}_a)(h_p^- - h_q^-)  \approx_{2\delta + \frac{\gamma}{3}}
 \Phi_p(a^*)(h_p^- - h_q^-) - (h_p^+ - k_q)\Phi_p(a^*)(h_p^- - h_q^-).
\]
Moreover, by how we defined $\Phi_p$ we have
\[
\Phi_p(a^*)(h_p^- - h_q^-) =
h_p^+ \Phi_p(a^*)(h_p^- - h_q^-)
\]
and
\[
 (1 - h_q^-) \Phi_p(c) k_q   \approx_{\frac{\gamma}{18M}} (1 - h_q^-) \Phi_q(c) k_q = 0
\]
for all $c \in F_q$. This last approximation entails, since $F_q$ is self-adjoint, that
\[
\lVert k_q \Phi_p(c) (1 - h_q^-) \rVert < \frac{\gamma}{18M}
\]
for all $c \in F_q$.
\end{proof}
\begin{claim} \label{claim:4}
For all  $a,b,ab \in F_q$ we have
$\lVert (\Delta^{p,\cdot}_{a,b})(h_p^- - h_q^-) \rVert < \epsilon_q - \epsilon_p$. 
\end{claim}
\begin{proof}
Similarly to the previous claims, we have the following approximations
\begin{align*} 
(\Delta^{p,\cdot}_{a,b})(h_p^- - h_q^-) &\approx_{\frac{\gamma}{6}}
(\phi(ab) - \phi(a)\phi(b))(h_p^- - h_q^-) \\
& \approx_{2 M_q\delta + \frac{2\gamma}{9}}
\lVert (\Phi_p(ab) - \phi(a)\Phi_p(b))(h_p^- - h_q^-) \rVert.
\end{align*} 
As noted in the previous claim, for all $c \in F_q$ we have
\[
\lVert k_q \Phi_p(c) (1 - h_q^-) \rVert < \frac{\gamma}{18M},
\]
hence the same is true with $(h_p^- - h_q^-)$ in place of $(1 - h_q^-)$. Thus
\begin{align*}
\phi(a)\Phi_p(b)(h_p^- - h_q^-) &\approx_{\frac{\gamma}{18M}}
\phi(a)(1- k_q)\Phi_p(b)(h_p^- - h_q^-) \\
&\approx_{M_q\delta + \frac{\gamma}{6}}
\Phi_p(a)(1- k_q)\Phi_p(b)(h_p^- - h_q^-) \\
&\approx_{\frac{\gamma}{18M}}
\Phi_p(a)\Phi_p(b)(h_p^- - h_q^-), 
\end{align*}
as required. 
\end{proof}
This completes the proof.
\end{proof}
Let $B$ be the $(\Q+i\Q) \text{-} \ast$-algebra generated by a dense subset of $A$ with cardinality equal to the density character of $A$. We define the family $\mathcal{D}$ as follows ($C$ and $\cP$ 
were defined at the beginning of \S\ref{sctn4}):
\[
\mathcal{D} = \set{\mathcal{D}_{F,\epsilon,h,R}: F \Subset B, \epsilon \in \mathbb{Q}^+,
 h \in C, R \in \mathcal{P}}.
\]
\begin{proposition}\label{genemb}
Suppose there exists a $\mathcal{D}$-generic filter $G$ for $\mathbb{E}_A$. Then there
exists a unital embedding  of $A$ into the Calkin algebra.
\end{proposition}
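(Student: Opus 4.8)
The plan is to read the embedding off the generic filter in the spirit of the abelian and quasidiagonal cases, the only new twist being that the partial maps $\psi_p$ are not block‑diagonal, so the operators representing the elements of $A$ must be produced as strong limits rather than block sums. Write $B$ for the dense $(\mathbb{Q}+i\mathbb{Q})$‑$*$‑subalgebra of $A$ used to define $\mathcal{D}$, and assume $1\in B$. For $a\in B$ put $G_a=\{p\in G:a\in F_p\}$; by genericity $G$ meets $\mathcal{D}_{\{1,a,a^*\},1,h,R}\in\mathcal{D}$, so $G_a\neq\emptyset$, and $G_a$ is downward directed since $G$ is a filter. If $p<q$ in $G_a$ then $h_p\gg h_q$ (item \ref{rel:3} of Definition~\ref{def:1}), and $h_ph_q=h_q$ forces $h_q^+[H]\subseteq h_p^-[H]\subseteq h_p^+[H]$; hence by item \ref{rel:5} we get $\psi_p(a)\xi=\psi_p(a)h_q^+\xi=\psi_q(a)\xi$ for $\xi\in h_q^+[H]$, so the net $(\psi_p(a))_{p\in G_a}$ is eventually constant on every vector of $V_a:=\bigcup_{q\in G_a}h_q^+[H]$. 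This subspace is dense: given $R\in\mathcal{P}$ choose, by the first closure property of Definition~\ref{dense}, an $h\in C$ with $h\geq R$; then any $p\in G\cap\mathcal{D}_{\{1,a,a^*\},1,h,R}$ has $h_p\gg h\geq R$, whence $R\leq h_p^+$, and $V_a\supseteq\bigcup_{R\in\mathcal{P}}R[H]$, which is dense. Since $\psi_p(a)=\psi_p(a)h_p^+$, the bound $\lVert\psi_p(a)+\Phi_p(a)(1-h_p^+)\rVert<\tfrac32\lVert a\rVert$ required in Definition~\ref{def:1} gives the uniform estimate $\lVert\psi_p(a)\rVert<\tfrac32\lVert a\rVert$, so the net converges in the strong operator topology to an operator $T_a$ with $\lVert T_a\rVert\leq\tfrac32\lVert a\rVert$ and $T_ah_q^+=\psi_q(a)$ for all $q\in G_a$. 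Note that $T_1=\Id_H$, because $\psi_p(1)=h_p^+$ and $(h_p^+)_{p\in G}$ increases strongly to the projection onto $\overline{V_1}=H$.

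Next I would verify that $\Psi_G\colon B\to\mathcal{Q}(H)$, $a\mapsto\pi(T_a)$, is a unital $*$‑homomorphism of $(\mathbb{Q}+i\mathbb{Q})$‑algebras. Fix $a,b\in B$ and $\lambda,\mu\in\mathbb{Q}+i\mathbb{Q}$, and set $\Delta^{+}=T_{\lambda a+\mu b}-\lambda T_a-\mu T_b$, $\Delta^{\cdot}=T_{ab}-T_aT_b$, $\Delta^{*}=T_{a^{*}}-T_a^{*}$. Given $\epsilon>0$, genericity yields $q\in G$ with $\{a,b,ab,\lambda a+\mu b\}\subseteq F_q$ and $\epsilon_q\leq\epsilon$; for every $p\in G$ with $p<q$, item \ref{rel:7} of Definition~\ref{def:1} bounds $\lVert\Delta^{p,+}_{a,b,\lambda,\mu}(h_p^--h_q^-)\rVert$, $\lVert\Delta^{p,\cdot}_{a,b}(h_p^--h_q^-)\rVert$ and $\lVert\Delta^{p,*}_{a}(h_p^--h_q^-)\rVert$ by $\epsilon_q-\epsilon_p<\epsilon$, with the $\Delta^{p,\bullet}$ as in Definition~\ref{qddef}. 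Such $p$ are cofinal in the relevant directed subsets of $G$, so along them $\Delta^{p,+}_{a,b,\lambda,\mu}\to\Delta^{+}$ and $\Delta^{p,\cdot}_{a,b}\to\Delta^{\cdot}$ strongly — using that products pass to strong limits when the left factors are uniformly bounded — while $\Delta^{p,*}_{a}\to\Delta^{*}$ weakly, since the adjoint is only weakly continuous; also $h_p^--h_q^-\to\Id_H-h_q^-$ strongly. Passing to the limit gives $\lVert\Delta^{+}(\Id_H-h_q^-)\rVert\leq\epsilon$ and likewise for $\Delta^{\cdot}$ and $\Delta^{*}$. As $h_q^-$ has finite rank, $\Delta^{+}h_q^-$, $\Delta^{\cdot}h_q^-$, $\Delta^{*}h_q^-$ are finite‑rank, so $\mathrm{dist}(\Delta^{+},\mathcal{K}(H))\leq\epsilon$ and similarly for the other two; letting $\epsilon\to0$ shows $\Delta^{+},\Delta^{\cdot},\Delta^{*}\in\mathcal{K}(H)$. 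Together with $T_1=\Id_H$ this says precisely that $\Psi_G$ is additive and $(\mathbb{Q}+i\mathbb{Q})$‑homogeneous, multiplicative, $*$‑preserving and unital, and it is bounded by $\tfrac32$.

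Finally, since $B$ is dense in $A$ the bounded map $\Psi_G$ extends uniquely to a continuous map $\overline{\Psi_G}\colon A\to\mathcal{Q}(H)$, which inherits $\mathbb{C}$‑linearity, multiplicativity, $*$‑preservation and unitality by continuity; hence $\overline{\Psi_G}$ is a unital $*$‑homomorphism. Because $A$ is simple (the standing assumption of this section) and $\overline{\Psi_G}(1)=1\neq0$, its kernel is a proper closed two‑sided ideal, hence $\{0\}$, so $\overline{\Psi_G}$ is injective and therefore isometric. This is the desired unital embedding of $A$ into the Calkin algebra.

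The step I expect to be the main obstacle is the bookkeeping with operator topologies in the second paragraph: every $\psi_p(a)$ is finite‑rank, so all the analytic content is concentrated in the strong limit $T_a$, and one must be careful that the multiplicative estimate passes to strong limits (using uniform boundedness of the left factors) whereas the $*$‑estimate only passes to weak limits, treating those two cases differently. A smaller but necessary point is that below every $p\in G$ there is a strictly stronger element of $G$ — otherwise $T_a$ would be finite‑rank and $\Psi_G$ trivial; this follows from $\mathcal{D}$‑genericity by intersecting $G$ with some $\mathcal{D}_{F_p,\epsilon_p,h,R_p}$ for $h\in C$ with $h\gg h_p$ and $\mathrm{rank}(h^+)>\mathrm{rank}(h_p^+)$, and then using directedness of the filter. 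Everything else is a routine unwinding of the relations of Definition~\ref{def:1}.
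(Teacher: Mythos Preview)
Your proof is correct and follows essentially the same approach as the paper: build the strong-operator limit $T_a$ of the net $(\psi_p(a))$, use the estimates in item~\ref{rel:7} of Definition~\ref{def:1} to show that the linearity, multiplicativity and $*$-defects of the resulting map are compact, then extend from the dense $(\mathbb{Q}+i\mathbb{Q})$-$*$-algebra and invoke simplicity of $A$. The only stylistic difference is that the paper verifies $\lVert\Delta^{\cdot}(1-h_q^-)\rVert<\epsilon$ by a contradiction argument chasing a single bad vector through a chain $s<q<p$ in $G$, whereas you pass directly to the limit in the strong (respectively weak, for the adjoint) operator topology and use lower semicontinuity of the norm; your treatment of the $*$-case via weak limits is in fact more careful than the paper's, which simply writes ``similarly''.
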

\begin{proof}
Let $G$ be a $\mathcal{D}$-generic filter and fix $a \in B$. The net
$\set{\psi_p(a)}_{\set{p \in G : a \in F_p}}$ (indexed according to $(G, >)$, which is directed
since $G$ is a filter) is strongly convergent in $\mathcal{B}(H)$. Indeed, by Proposition \ref{prop:1} let
\[
p= p_0 > p_1 > \dots > p_n > \dots
\]
be an infinite decreasing sequence of elements of $G$ satisfying that $a \in F_p$,
$\epsilon_{p_n}<1/n$ and such that the sequence $\set{h_{p_n}}_{n \in \N}$ is an approximate unit
for $\mathcal{K}(H)$ (which is possible by density of $C$ and by genericity of $G$). The sequence
$\set{\psi_{p_n}(a)}_{n \in \N}$ is strongly convergent to an operator in $\mathcal{B}(H)$
(since $\lVert \psi_{p_n}(a) \rVert < 3 \lVert a \rVert /2$) which we denote by $\Psi(a)$.
In order to show that the whole net $\set{\psi_p(a)}_{\set{p \in G : a \in F_p}}$ strongly converges
to $\Psi(a)$, let $\xi_1, \dots, \xi_k$ be norm one vectors belonging to $h_{p_n}[H]$ for some
$n \in \N$. Then, for all $q \in G$ such that $q < p_n$ we have
\[
\psi_q(a) \xi_j = \psi_q(a) h_{p_n}^+ \xi_j = \psi_{p_n}(a) \xi_j
\]
for all $j \le k$. Since $\epsilon_n \to 0$ as $n \to \infty$, and $\set{h_{p_n}[H] : n \in \N}$
is dense in $H$ (by genericity, $(h_{p_n})_{n \in \N}$ is an approximate unit of $\mathcal{K}(H)$), it follows that
the net strongly converges to $\Psi(a)$ on $H$. Let $\Phi_G = \pi \circ \Psi$.
\begin{claim}
The map $\Phi_G:B \to \mathcal{Q}(H)$ defined above is a unital, bounded  $\ast$-homomorphism of $(\Q+i\Q)$-algebras.
\end{claim}
\begin{proof}
For $a, b \in B$, we will prove that $\Psi(ab) - \Psi(a)\Psi(b)$ is compact. Let $\epsilon > 0$
and pick $p \in G$ such that $a,b, ab \in F_p$ and $\epsilon_p < \epsilon$. We claim
that
\[
\lVert (\Psi(ab) - \Psi(a)\Psi(b))(1 - h_p^-) \rVert < \epsilon.
\]
Suppose this fails, and let $\xi \in (1 - h_p^-)H$ be a norm one vector such that
\[
\lVert (\Psi(ab) - \Psi(a)\Psi(b))\xi \rVert > \epsilon.
\]
By genericity of $G$ we can find $q \in G$ such that $q < p$ and
\[
\lVert (\Psi(ab) - \Psi(a)\Psi(b))\eta \rVert > \epsilon
\]
where $\eta = h_q \xi$. Now let $s < q $ in $G$ such that
$\Psi(b) \eta$ is close enough to $h_s \Psi(b) \eta$ to obtain
\[
\lVert (\psi_s(ab) - \psi_s(a)\psi_s(b))\eta \rVert > \epsilon.
\]
But this is a contradiction since $s < p$ implies
\[
\lVert (\psi_s(ab) - \psi_s(a)\psi_s(b))(h_s^- - h_p^-) \rVert < \epsilon_p < \epsilon.
\]
Similarly it can be checked that $\Phi_G$ is $(\Q+i\Q)$-linear and self-adjoint. Moreover, $\Phi_G$ is bounded since $\Psi$ is. The claim follows since $\Psi$ maps the unit of $A$ to the identity on~$H$.
\end{proof}
Extending $\Phi_G$ to the complex linear span of $B$, we obtain a unital, bounded $\ast$-homomorphism  into the Calkin algebra. This is a dense (complex) $\ast$-subalgebra of $A$, hence we can uniquely extend to obtain a unital $\ast$-homomorphism from $A$ into $\mathcal{Q}(H)$, which is injective, since $A$ is simple.
\end{proof}

Note that the fact that $\Phi_G$ above is bounded is crucial in allowing one to extend it and obtain a $*$-homomorphism defined on all of the algebra $A$. To see how this can fail, the identity map on the (algebraic) group algebra of any non-amenable discrete group cannot be extended to a $*$-homomorphism from the reduced group \cstar-algebra to the universal one (see \cite[Theorem~2.6.8]{brownozawa}). 


With the only part of Theorem \ref{T1} remaining unproven being the fact that the poset  is ccc, we begin with the following lemma yielding sufficient conditions for the compatibility of  elements of $\mathbb{E}_A$.

\begin{lemma} \label{lemma:3}
Suppose that  $p, q \in \mathbb{E}_A$ satisfy the following conditions.   
\begin{enumerate}
\item $h_p = h_q$ and $R_p = R_q$. 
\item $\psi_p(a) = \psi_q(a)$ for all $a \in F_p \cap F_q$. 
\item There exist two unital $*$-homomorphisms 
$\Phi_p:C^\ast (F_p)\rightarrow\mathcal{B}(H)$ and  $\Phi_q:~C^\ast (F_q)\rightarrow\mathcal{B}(H)$ which are faithful and essential,  
and a projection $k$ satisfying the following:
\begin{enumerate}
\item The pairs $(k,\Phi_p)$ and $(k,\Phi_q)$ are promises for  $p$ and $q$, respectively. 
\item \label{approx} There are constants $\delta_p$ and $\delta_q$ such that  
$N(p,\Phi_p)<\delta_p<\frac{{\epsilon}_p}{3M_p}$ and 
 $N(q,\Phi_q)<\delta_q<\frac{{\epsilon}_q}{3M_q}$, and 
if
\[
\gamma\leq\min\{{\epsilon}_p-3M_p{\delta_p}, D(p,\Phi_p), 
{\epsilon}_q-3M_q{\delta_q}, D(q,\Phi_q)\}
\]
and 
\[
M=\max \set{M(p, F_p \cup F_q), M(q,F_p \cup F_q)},
\] 
then  every $a\in F_p\cap F_q$ satisfies
$\lVert \Phi_p(a) - \Phi_q(a) \rVert < \frac{\gamma}{18M}$. 
\item There is a trivial embedding $\Theta: C^\ast(F_p \cup F_q) \to \mathcal{Q}(H)$   such that
$\pi \circ \Phi_p = \Theta_{\restriction C^\ast(F_p)}$ and
$\pi \circ \Phi_q = \Theta_{\restriction C^\ast(F_q)}$.
\end{enumerate}
\end{enumerate}
Then $p$ and $q$ are compatible.
\end{lemma}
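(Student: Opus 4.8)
The plan is to build a common extension $s<p$, $s<q$ in $\mathbb{E}_A$, following the template of the density argument in Proposition~\ref{prop:1} but run \emph{simultaneously} against $p$ and $q$; the hypotheses are arranged precisely so that this works. First I would set $F_s=F_p\cup F_q$, which is self-adjoint and contains $1$, fix $\gamma$ as in condition~\ref{approx} and put $\epsilon_s=\gamma/6$; since $3M_p\delta_p>0$ and $3M_q\delta_q>0$ we have $\gamma<\epsilon_p$ and $\gamma<\epsilon_q$, so $\epsilon_s<\epsilon_p,\epsilon_q$. Then, exactly as in Proposition~\ref{prop:1}, I would choose a fresh projection $k_s=(k')^-$ with $k'\in C$, $k'\gg h_p,R_p$, build $h_s\gg h_p=h_q$ and $R_s\ge R_p=R_q$ through the same chain of finite-rank perturbations supplied by Lemma~\ref{lemma:1} and Corollary~\ref{voic}, and define $\psi_s\colon F_s\to D_{R_s,h_s}$ by the ``old-condition'' formula of that proof, using the value $\psi_p(a)$ for $a\in F_p$ and $\psi_q(a)$ for $a\in F_q\setminus F_p$. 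These agree on $F_p\cap F_q$ because $\psi_p$ and $\psi_q$ agree there by hypothesis, so $\psi_s$ is well defined, and it can be found inside $D_{R_s,h_s}$ by the closure properties in items~\ref{dense2} and~\ref{dense3} of Definition~\ref{dense}.

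The only new ingredient is the promise representation $\Phi_s$ for $s$. Since $\Theta$ is trivial, fix a lift $\Phi$: a faithful, essential, unital representation of $C^\ast(F_s)$ with $\pi\circ\Phi=\Theta$. Then $\Phi|_{C^\ast(F_p)}$ and $\Phi_p$ both lift $\Theta|_{C^\ast(F_p)}=\pi\circ\Phi_p$ and hence agree modulo the compacts, and likewise $\Phi|_{C^\ast(F_q)}\sim_{\mathcal{K}(H)}\Phi_q$. I would then use Theorem~\ref{lemma:2}, combined with the finite-rank adjustments of Corollary~\ref{voic}, to conjugate $\Phi$ into a faithful, essential, unital representation $\Phi_s$ of $C^\ast(F_s)$ with $\Phi_s|_{C^\ast(F_p)}\sim_{\mathcal{K}(H)}\Phi_p$, $\Phi_s|_{C^\ast(F_q)}\sim_{\mathcal{K}(H)}\Phi_q$, and $\|\Phi_s(a)-\Phi_p(a)\|<\gamma/36M$ for $a\in F_p$, $\|\Phi_s(a)-\Phi_q(a)\|<\gamma/12M$ for $a\in F_q$ --- a first application matching $\Phi$ with $\Phi_p$ in norm on $F_p$, and then a second one matching it with $\Phi_q$ on $F_q\setminus F_p$ by a unitary fixing the finite-dimensional region already pinned down for $p$; the $C^\ast(F_q)$-restrictions still agree modulo the compacts after the first conjugation because that conjugation can be arranged to be a compact perturbation of the identity. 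On $F_p\cap F_q$ the two norm bounds are mutually consistent precisely because condition~\ref{approx} forces $\|\Phi_p(a)-\Phi_q(a)\|<\gamma/18M$ there. With $\Phi_s,k_s,h_s,R_s,\psi_s$ all in hand, $(k_s,\Phi_s)$ is shown to be a promise for $s$ by rerunning Claim~\ref{claim:1} and the claim immediately following it.

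It remains to verify $s<p$ and $s<q$. Relations \ref{rel:1}--\ref{rel:5} follow from the construction (for $s<q$, the identity $\psi_s(a)h_q^+=\psi_q(a)$ for $a\in F_q$ holds by the same formula), and the inclusion relation \ref{relf} is arranged by the finite-rank step exactly as in Proposition~\ref{prop:1}. For the quantitative relations \ref{rel:6}--\ref{rel:7} one reproduces Claims~\ref{claim:2}, \ref{claim:3}, \ref{claim:4}, once with $(\Phi_s,\psi_p)$ in the roles of the ``new'' and ``old'' data and once with $(\Phi_s,\psi_q)$: the accumulated error stays below $\epsilon_p-\epsilon_s$ on the $p$-side because $N(p,\Phi_p)<\delta_p$ and $\gamma\le\epsilon_p-3M_p\delta_p$, and below $\epsilon_q-\epsilon_s$ on the $q$-side because $N(q,\Phi_q)<\delta_q$ and $\gamma\le\epsilon_q-3M_q\delta_q$. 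An element $a\in F_p\setminus F_q$ imposes no constraint on the $q$-side, since it occurs in none of the relations defining $s<q$, and symmetrically for $a\in F_q\setminus F_p$ on the $p$-side; for $a\in F_p\cap F_q$ one uses whichever of the two closeness estimates for $\Phi_s$ is convenient, the two differing by at most $\gamma/18M$. I expect the real difficulty to be the middle paragraph: producing a \emph{single} representation of $C^\ast(F_p\cup F_q)$ that simultaneously norm-approximates $\Phi_p$ on $F_p$ and $\Phi_q$ on $F_q$ while retaining both mod-compact agreements is exactly what the trivial embedding $\Theta$ is for --- a lift of $\Theta$ automatically agrees modulo the compacts with both $\Phi_p$ and $\Phi_q$ --- whereas the near-agreement of $\Phi_p$ and $\Phi_q$ on $F_p\cap F_q$ coming from condition~\ref{approx} is what keeps the two norm-approximation demands compatible on the overlap. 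Everything else is the routine but lengthy propagation of $\gamma$-fractions through two copies of the proof of Proposition~\ref{prop:1}, which is exactly why $\gamma$ must lie below the minimum of the quantities attached to \emph{both} $p$ and $q$.
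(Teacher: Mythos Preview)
Your identification of the crux --- producing a single representation of $C^\ast(F_p\cup F_q)$ compatible with both promises --- is correct, but your argument for it has a genuine gap. You claim that the first Voiculescu conjugation (matching $\Phi$ with $\Phi_p$ in norm on $F_p$) ``can be arranged to be a compact perturbation of the identity''. This is not justified: Theorem~\ref{lemma:2} produces a unitary $u_1$ with $\Ad u_1\circ\Phi|_{C^\ast(F_p)}\sim_{\mathcal{K}(H)}\Phi_p$, and since $\Phi|_{C^\ast(F_p)}\sim_{\mathcal{K}(H)}\Phi_p$ already, this only forces $\pi(u_1)$ to commute with $\Theta(a)$ for $a\in C^\ast(F_p)$ --- not for $a\in C^\ast(F_q)\setminus C^\ast(F_p)$, and certainly nothing forces $u_1-1$ to be compact. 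After conjugating by $u_1$, the restriction to $C^\ast(F_q)$ need not agree with $\Phi_q$ modulo the compacts. Worse, even granting the second step, Corollary~\ref{voic} only fixes the action on a \emph{finite-dimensional} subspace pointwise; it does not preserve a global operator-norm approximation. Conjugating by $u_2$ can therefore destroy the norm closeness to $\Phi_p$ on $F_p$ that you just achieved. There is no evident way, using the tools at hand, to manufacture a single representation of $C^\ast(F_p\cup F_q)$ that is \emph{globally} norm-close to $\Phi_p$ on $F_p$ and to $\Phi_q$ on $F_q$.

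The paper sidesteps this by never seeking global norm closeness. Since the lift $\Phi$ of $\Theta$ agrees modulo compacts with both $\Phi_p$ and $\Phi_q$, one simply chooses $k_s\gg h$ large enough that $\lVert(\Phi_p(a)-\Phi(a))(1-k_s)\rVert<\gamma/36M$ for $a\in F_p$ and likewise for $\Phi_q$ on $F_q$: a \emph{tail} estimate, not a norm estimate. All three representations are then conjugated by the \emph{same} unitary $w$ (a compact perturbation of the identity fixing $k_s$, obtained via Lemma~\ref{lemma:1} as in Proposition~\ref{prop:1}), yielding $\Phi'_p,\Phi'_q,\Phi'$. The map $\phi$ is built from $\psi_p$ and $\Phi'_p$ on $F_p$, from $\psi_q$ and $\Phi'_q$ on $F_q\setminus F_p$, while the promise for $s$ uses $\Phi'$. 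The promise inequality (item~\ref{itemc}) then holds because $\Phi'$ is close to $\Phi'_p$ and $\Phi'_q$ on $(h_s^+-k_s)$ by the tail estimate; the extension inequalities for $s<p$ and $s<q$ hold because $\Phi'_p,\Phi'_q$ are close to the original $\Phi_p,\Phi_q$ on $k_s$. Finally, the hypothesis $\lVert\Phi_p(a)-\Phi_q(a)\rVert<\gamma/18M$ on $F_p\cap F_q$ is \emph{not} used to reconcile two Voiculescu steps (as you suggest) but to handle mixed products in the verification of $s<q$: when $a\in F_p\cap F_q$ and $b,ab\in F_q\setminus F_p$, the quantity $\Delta^{s,\cdot}_{a,b}(h_s^--h_q^-)$ approximates $(\Phi'_q(ab)-\Phi'_p(a)\Phi'_q(b))(h_s^--h_q^-)$, and one needs $\Phi'_p(a)\approx\Phi'_q(a)$ to replace this by $(\Phi'_q(ab)-\Phi'_q(ab))(h_s^--h_q^-)=0$.
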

\begin{proof}
Write  $h$ for $h_p$ and $R$ for  $R_q$. Let $\Phi$ be a faithful, essential, unital
representation that lifts $\Theta$ to $\mathcal{B}(H)$. 
Since $\Phi_p$ and $\Phi_{\restriction F_p}$ 
 agree modulo the compacts, and 
$\Phi_q$ and $\Phi_{\restriction F_q}$ agree modulo the compacts, 
there exists (by condition \ref{dense1} of Definition \ref{dense}) $k \in C$ such that 
 $k \gg h$, $k\gg R$, and in addition  
the following holds: 
For all $a\in F_p$ we have 
\[
\lVert (\Phi_p(a) - \Phi(a))(1 - k^-) \rVert < \frac{\gamma}{36M} , 
\]
and for all $a\in F_q$ we have 
\[
\lVert (\Phi_q(a) - \Phi(a))(1 - k^-) \rVert < \frac{\gamma}{36M}. 
\]
We shall denote $k^-$ by $k_s$.
Arguing as in the first part of the proof of Proposition \ref{prop:1} we can find $h_s \gg k_s$
(i.e. $h_s^- \ge k_s$) in $C$ and a unitary $w$ such that
\begin{enumerate}
\item $w$ is a compact perturbation of the identity,
\item $w k_s = k_s w = k_s$,
\pushcounter
\end{enumerate}
and by letting $\Phi'_p = (\Ad w)\circ\Phi_p$, $\Phi'_q = (\Ad w)\circ\Phi_q$ and 
$\Phi' = (\Ad w)\circ\Phi$, we also have that
\begin{enumerate}
\popcounter
\item $\lVert (\Phi'_p(a) - \Phi_p(a))k_s \rVert < \frac{\gamma}{36M}$
for all $a \in F_p$,
\item $\lVert (\Phi'_q(a)- \Phi_q(a))k_s \rVert < \frac{\gamma}{36M}$
for all $a \in F_q$,
\item $\lVert (\Phi'(a) - \Phi(a))k_s \rVert < \frac{\gamma}{36M}$
for all $a \in F_p\cup F_q$,
\item $\Phi'_p(a)k_s[H]\subseteq h_s^-[H]$ and $\Phi'_p(a)h_s^-[H]\subseteq h_s^+[H]$ for all $a\in F_p$,
\item $\Phi'_q(a)k_s[H]\subseteq h_s^-[H]$ and $\Phi'_q(a)h_s^-[H]\subseteq h_s^+[H]$ for all $a\in F_q$,
\item $\Phi'(a)k_s[H]\subseteq h_s^-[H]$ and $\Phi'(a)h_s^-[H]\subseteq h_s^+[H]$ for all $a\in F_p\cup F_q$.
\end{enumerate}
Let $R_s \in \mathcal{P}$ be such that $R_s \ge R$ and for all $a\in F_p$ and all $b\in F_q$ we have 
\begin{align*}
\lVert (1 - R_s)\Phi'_p(a)h_s^+ \rVert &< \frac{\gamma}{18M}, \\
\lVert (1 - R_s)\Phi'_q(b)h_s^+ \rVert &< \frac{\gamma}{18M}. 
\end{align*}
Given $a \in F_p$, consider the operator
\[
\phi(a) = \psi_p(a) + (1 - h^-)\Phi'_p(a)(h_s^- - h^+) + (1 - h^-)R_s\Phi'_p(a)(h_s^+ - h_s^-)
\]
and
for $a \in F_q \setminus F_p$
\[
\phi(a) = \psi_q(a) + (1 - h^-)\Phi'_q(a)(h_s^- - h^+) + (1 - h^-)R_s\Phi'_q(a)(h_s^+ - h_s^-).
\]
Define now the function $\psi_s : F_p \cup F_q \to D_{R_s,h_s}$ as an approximation
of $\phi$ in the same way it was done in the proof of Proposition \ref{prop:1}.
Suitably adapting the arguments in such proof to the present situation
it is possible to show that
\[
s = (F_p \cup F_q, \gamma/6, h_s, R_s, \psi_s)
\]
is an element of $\mathbb{E}_A$ with promise
$(k_s,\Phi')$. We follow the proof of Claim \ref{claim:1} in order to check that
the quantity $\lVert (\psi_s(a) - \Phi'(a))(h_s^+ - k_s) \rVert$ is small enough for $a \in F_p \cup F_q$, using in addition that for all  $a \in F_p$ 
\[
\lVert (\Phi_p(a) - \Phi(a))(1 - k_s) \rVert < \frac{\gamma}{36M} 
\]
and that for all $a\in F_q$ 
\[
\lVert (\Phi_q(a) - \Phi(a))(1 - k_s) \rVert < \frac{\gamma}{36M}. 
\]
This entails the same inequality between $\Phi'_p$ and $\Phi'$ (and between
$\Phi'_q$ and $\Phi'$) since the unitary $w$ fixes $k_s$. The proofs of
$s < p$ and $s < q$ go along the lines of those in Claim \ref{claim:2}, \ref{claim:3}
and \ref{claim:4}, keeping the following caveat in mind: It might happen, for instance,
that $p$ and $q$ are such that $a \in F_p \cap F_q$ and $b, ab \in F_q \setminus F_p$. In
this case $\Delta^{q, \cdot}_{a,b} (h_s^- - h_q^-)$ can be approximated (following the proof
of Claim \ref{claim:4}) as $(\Phi_q(ab) - \Phi_p(a) \Phi_q(b))(h_s^- - h_q^-)$. This is where
the condition $\Phi_p(a) \approx_{\frac{\gamma}{18M}} \Phi_q(a)$, required in
item \ref{approx} of the statement of the present lemma, plays a key role, showing that
the latter term is
close to zero. The same argument applies for the analogous situations where
$\Phi_p$ and $\Phi_q$ appear in the same formulas for the addition and the adjoint operation.
\end{proof}

Property K is a strengthening of the countable chain condition (see section \S \ref{sctn2.2}). 

\begin{proposition} \label{propK}
The poset $\mathbb{E}_A$ has property K and hence satisfies  the countable chain condition.
\end{proposition}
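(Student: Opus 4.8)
The plan is to verify the stronger \emph{property K}, from which the countable chain condition follows at once: an uncountable antichain would be an uncountable subset no two of whose members are compatible, contradicting property K. So fix an uncountable $W\subseteq\mathbb{E}_A$, choose for each $p\in W$ a promise $(k_p,\Phi_p)$ for $p$, and aim to produce an uncountable $W'\subseteq W$ any two elements of which are compatible, with compatibility certified through Lemma \ref{lemma:3}.

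First I would refine $W$ by a sequence of pigeonhole and combinatorial steps. Since $C$, $\mathcal{P}$ and $\mathbb{Q}^+$ are countable we may pass to an uncountable subset on which $h_p=h$, $R_p=R$, $\epsilon_p=\epsilon$, and $k_p=k_0$ are constant (recall $k_0=k^-$ for some $k\in C$ and $k_0\le h^-$). By the $\Delta$-System Lemma (Lemma \ref{delta}) applied to $\{F_p:p\in W\}$ we may assume this family is a $\Delta$-system with some root $F_0$, so $F_p\cap F_q=F_0$ whenever $p\neq q$. As $D_{R,h}$ is countable, so is the set of functions $F_0\to D_{R,h}$, and a further refinement makes $\psi_p\restriction F_0$ independent of $p$; hypotheses (1) and (2) of Lemma \ref{lemma:3} now hold for any two elements of the refined set. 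Finally, partitioning $\mathbb{R}$ into unit intervals and applying pigeonhole finitely many more times, we may assume $\max\{\lVert a\rVert:a\in F_p\}$ and $L(F_p)$ are bounded independently of $p$; since $\lVert\psi_p(a)\rVert\le\tfrac32\lVert a\rVert$ for $a\in F_p$ (because $\psi_p(a)=\psi_p(a)h_p^+$ while $\Phi_p(a)(1-h_p^+)$ vanishes on $h_p^+[H]$, so item (d) of the promise clause in Definition \ref{def:1} bounds $\lVert\psi_p(a)\rVert$), this bounds $M_p$ uniformly, and a similar control of $L$ and $J$ on unions bounds $M(p,F_p\cup F_q)$ uniformly over pairs. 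Call the resulting uncountable set $W'$.

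The core step is, for a fixed pair $p,q\in W'$, to produce the data required in hypothesis (3) of Lemma \ref{lemma:3}: a common projection $k$, promises $(k,\Phi'_p)$ and $(k,\Phi'_q)$ for $p$ and $q$, the uniform estimate $\lVert\Phi'_p(a)-\Phi'_q(a)\rVert<\gamma/(18M)$ for $a\in F_p\cap F_q$, and a trivial embedding $\Theta:C^*(F_p\cup F_q)\to\mathcal{Q}(H)$ with $\pi\circ\Phi'_p=\Theta\restriction C^*(F_p)$ and $\pi\circ\Phi'_q=\Theta\restriction C^*(F_q)$. I would get all of this from one ambient representation. Take a faithful, essential, unital representation $\rho$ of the separable algebra $C^*(F_p\cup F_q)$, and conjugate it by a unitary that is a compact perturbation of the identity — using Theorem \ref{lemma:2}, its finite-dimensional refinement Corollary \ref{voic}, and Lemma \ref{lemma:1} for the exact range requirements — so as to obtain a representation $\Phi$ of $C^*(F_p\cup F_q)$ such that $\Phi(a)(h^+-k_0)$ lies within $\epsilon/(3M_p)$ of $\psi_p(a)(h^+-k_0)$ for $a\in F_p$ and within $\epsilon/(3M_q)$ of $\psi_q(a)(h^+-k_0)$ for $a\in F_q$, such that $\Phi(a)k_0[H]\subseteq h^-[H]$ and $\Phi(a)h^-[H]\subseteq h^+[H]$ for all $a\in F_p\cup F_q$, and such that $\lVert\psi_p(a)+\Phi(a)(1-h^+)\rVert<\tfrac32\lVert a\rVert$ (and likewise with $q$); the first of these targets is attainable because the original promise representations approximate $\psi_p$, $\psi_q$ on $h^+-k_0$ (so the prescribed finite-rank data is approximately consistent), because $\psi_p$ and $\psi_q$ agree on $F_0$, and because the uniform bounds from the previous stage turn the required accuracy into a single positive quantity valid for all pairs. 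Setting $\Phi'_p:=\Phi\restriction C^*(F_p)$, $\Phi'_q:=\Phi\restriction C^*(F_q)$, $k:=k_0$ and $\Theta:=\pi\circ\Phi$, condition (3c) is immediate, condition (3b) holds because $\Phi'_p$ and $\Phi'_q$ coincide on $C^*(F_p)\cap C^*(F_q)\supseteq F_0$, the maps $\Phi'_p,\Phi'_q$ are faithful and essential as restrictions of $\Phi$, and condition (3a) is exactly what the construction of $\Phi$ delivers. Lemma \ref{lemma:3} then gives that $p$ and $q$ are compatible, so $W'$ witnesses property K for $W$; as $W$ was arbitrary, $\mathbb{E}_A$ has property K and hence is ccc.

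The main obstacle is the construction of $\Phi$ in the previous paragraph: fitting a \emph{single} faithful essential unital representation of $C^*(F_p\cup F_q)$ to the prescribed finite-rank data coming from $\psi_p$ on $F_p$ and $\psi_q$ on $F_q$ simultaneously, while preserving the exact invariance conditions on the projections $k_0$, $h^-$, $h^+$. This is precisely the form of rigidity that Voiculescu's theorem provides — approximate unitary equivalence of faithful essential representations, refined by Corollary \ref{voic} so as to fix a given finite-dimensional subspace and by Lemma \ref{lemma:1} so as to impose the range inclusions — and it is the reason the conditions in $\mathbb{E}_A$ are organized around a promise in the first place. I expect the bookkeeping needed to absorb all the error terms (in the spirit of the claims in the proof of Proposition \ref{prop:1}) to be lengthy but routine once $\Phi$ is in hand.
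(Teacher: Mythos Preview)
Your preliminary refinements are fine and match the paper's, but the core step has a genuine gap. You propose to build, for a fixed pair $p,q$, a \emph{single} faithful essential unital representation $\Phi$ of $C^*(F_p\cup F_q)$ whose restriction to $F_p$ reproduces the finite-rank data $\psi_p(\,\cdot\,)(h^+-k_0)$ and whose restriction to $F_q$ reproduces $\psi_q(\,\cdot\,)(h^+-k_0)$, by conjugating an arbitrary representation $\rho$ via Voiculescu. Voiculescu's theorem (Theorem~\ref{lemma:2} and Corollary~\ref{voic}) does not do this: it provides approximate unitary equivalence between two representations of the \emph{same} algebra. To conjugate $\rho$ onto the prescribed data you would need a target representation of $C^*(F_p\cup F_q)$ that already approximates that data, and you have none. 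What you do have are the two promises $\Phi_p$ and $\Phi_q$, but these are representations of the \emph{smaller} algebras $C^*(F_p)$ and $C^*(F_q)$, and there is no mechanism for amalgamating them into a representation of $C^*(F_p\cup F_q)$. Indeed, the values $\Phi_p(a)(h^+-k_0)$ for $a\in F_p\setminus F_0$ and $\Phi_q(b)(h^+-k_0)$ for $b\in F_q\setminus F_0$ are in general algebraically incompatible with the relations holding in $C^*(F_p\cup F_q)$ (think of $a,b$ orthogonal in $A$ while $\Phi_p(a)$ and $\Phi_q(b)$ need not be), so no representation of the larger algebra can match both.

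The paper resolves this with an idea you are missing: it first invokes the main theorem of \cite{farah2017calkin} to obtain a locally trivial embedding $\Theta:C^*\bigl(\bigcup_\alpha F_\alpha\bigr)\to\mathcal Q(H)$ of the whole $\aleph_1$-generated algebra, and then, for each $\alpha$ separately, applies Corollary~\ref{voic} inside $C^*(F_\alpha)$ to replace the promise $\Phi_\alpha$ by $\Phi'_\alpha=\Ad w\circ\Phi_\alpha$ which agrees with a lift of $\Theta\restriction C^*(F_\alpha)$ modulo compacts while keeping $\Phi'_\alpha(a)h^+=\Phi_\alpha(a)h^+$ for $a\in F_\alpha$. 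The second requirement guarantees $(k,\Phi'_\alpha)$ is still a promise; the first guarantees $\pi\circ\Phi'_\alpha=\Theta\restriction C^*(F_\alpha)$, so hypothesis (3c) of Lemma~\ref{lemma:3} holds for \emph{every} pair with the single global $\Theta$, and on the root the $\Phi'_\alpha$ now differ only by compacts, after which a further pigeonhole refinement makes them uniformly close and yields (3b). The point is that Voiculescu is applied within each $C^*(F_\alpha)$ individually (where it is legitimate), and the nontrivial gluing across different $\alpha$'s is handled by the pre-existing $\Theta$ from \cite{farah2017calkin}, not by Voiculescu.
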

\begin{proof}
Let $\set{p_\alpha : \alpha < \aleph_1}$ be a set of conditions\footnote{We
suppress the notation and denote $F_{p_\alpha}$ by $F_\alpha$,
$\epsilon_{p_\alpha}$ by $\epsilon_\alpha$, etc.} in $\mathbb{E}_A$ and for each $\alpha < \aleph_1$
fix a promise $(k_\alpha,\Phi_\alpha)$  for the condition $p_{\alpha}$.
By passing to  an uncountable subset if necessary,
we may assume $\epsilon_\alpha
= \epsilon$, $h_\alpha = h$, $R_\alpha = R$, $k_\alpha = k$ for all $\alpha < \aleph_1$.
An application
of the $\Delta$-System Lemma (Lemma \ref{delta}) yields a finite set $Z \Subset A$ such that
$F_\alpha \cap F_\beta = Z$ for all $\alpha, \beta < \aleph_1$.
Since $Z$ is finite and
$D_{R, h}$ is countable, we can furthermore assume that for all $\alpha, \beta < \aleph_1$
if $a \in F_\alpha \cap F_\beta$ then $\psi_\alpha(a) = \psi_\beta(a)$.
Consider
\[
F = \bigcup_{\alpha < \aleph_1} F_\alpha.
\]
By \cite{farah2017calkin} there is a locally trivial embedding $\Theta: C^\ast(F) \to \mathcal{Q}(H)$.
For each $\alpha <\aleph_1$ fix a lift $\Theta_\alpha:
C^\ast(F_\alpha) \to \mathcal{B}(H)$ of $\Theta_{\restriction C^\ast(F_\alpha)}$.
Corollary \ref{voic} applied to $\Phi_\alpha$ and $\Theta_\alpha$
provides a faithful, essential, unital $\Phi'_\alpha: C^\ast(F_\alpha) \to \mathcal{B}(H)$
such that
\begin{enumerate}
\item $\Phi_\alpha'(a) - \Theta_\alpha(a) \in \mathcal{K}(H)$ for all $a \in F_\alpha$, hence
$\pi \circ \Phi_\alpha' = \Theta_{\restriction C^\ast(F_\alpha)}$,
\item $\Phi_\alpha'(a) h^+_\alpha = \Phi_\alpha(a) h^+_\alpha$ for all $a \in F_\alpha$.
\end{enumerate}
This entails that the pair $(k_\alpha, \Phi_\alpha')$ is still a promise for $p_\alpha$.
Hence, with no loss of generality, we can assume $\pi \circ \Phi_\alpha
= \Theta_{\restriction C^\ast(F_\alpha)}$ for every $\alpha < \aleph_1$.
This in particular implies that
\[
\Phi_\alpha (a) \sim_{K(H)} \Phi_\beta(a), \ \text{for all} \ a\in Z.
\]
Fix an arbitrary $\gamma >0$. We can assume that for all $\alpha, \beta \in \aleph_1$ and all $a \in F_\alpha \cap F_\beta$
\[
\lVert \Phi_\alpha(a) - \Phi_\beta(a) \rVert < \gamma.
\]
Indeed, start by fixing  $\delta < \aleph_1$. Then for each $\alpha < \aleph_1$ there is $P_\alpha \in
\mathcal{P}$ such that
\[
\lVert (\Phi_\alpha - \Phi_\delta)_{\restriction Z}(1 - P_\alpha) \rVert < \gamma/5
\]
and $R_\alpha \in \mathcal{P}$ such that
\[
\lVert (1 - R_\alpha) \Phi_{\alpha \restriction Z} P_\alpha \rVert < \gamma/5.
\]
We can assume $R_\alpha = R$ and $P_\alpha = P$ for all $\alpha < \aleph_1$
and since $R\mathcal{B}(H)P$ is finite-dimensional we can also require that
\[
\lVert R(\Phi_\alpha - \Phi_\beta)_{\restriction Z}P \rVert < \gamma/5
\]
for all $\alpha,\beta <\aleph_1.$ Thus, for $a\in Z,$ we have that: 
\begin{align*}
\lVert \Phi_\alpha(a) - \Phi_\beta(a) \rVert \le \lVert (\Phi_\alpha - \Phi_\beta)_{\restriction Z}P \rVert &+ \lVert (\Phi_\alpha - \Phi_\delta)_{\restriction Z}(1 - P) \rVert \\ &+
\lVert (\Phi_\beta - \Phi_\delta)_{\restriction Z}(1 - P) \rVert< \gamma.
\end{align*}
Since the choice of $\gamma$ was arbitrary, Lemma \ref{lemma:3} implies that we can pass to an uncountable subset in which any two conditions  $p_\alpha$ and $p_\beta$ 
are compatible.
\end{proof}

We quickly recall that Martin's Axiom, MA,  asserts that for every ccc poset $\bbP$ and every family $\cD$ of fewer than $2^{\aleph_0}$ 
dense open subsets there exists a filter in $\bbP$ intersecting all sets in $\cD$. 
\begin{corollary}\label{MA}
Assume MA. Then every \cstar-algebra with density character strictly less than $2^{\aleph_0}$ embeds into the Calkin algebra.
\end{corollary}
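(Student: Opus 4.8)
The plan is to run the forcing construction of \S\ref{sctn4} \emph{inside the ground model}, using MA in place of passing to a generic extension. By Proposition~\ref{fvlemma} it suffices to produce an embedding into $\cQ(H)$ of a unital, simple \cstar-algebra $B$ of the same density character as $A$, and then to precompose with the embedding $A \hookrightarrow B$; so fix such a $B$. If $B$ is separable, the conclusion is the standard amplification argument recalled in the introduction, with no appeal to MA needed. I therefore assume that the density character $\kappa$ of $B$ is uncountable; since $\kappa < 2^{\aleph_0}$, MA applies to families of $\kappa$ dense open subsets of ccc posets.

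Next I would carry out the construction of \S\ref{sctn4} for $B$. Let $B_0$ be a dense $(\mathbb{Q}+i\mathbb{Q})$-$\ast$-subalgebra of $B$ with $|B_0| = \kappa$, and let
\[
\mathcal{D} = \set{\mathcal{D}_{F,\epsilon,h,R} : F \Subset B_0,\ \epsilon \in \mathbb{Q}^+,\ h \in C,\ R \in \mathcal{P}}
\]
be the family of dense sets introduced before Proposition~\ref{genemb}. By Proposition~\ref{prop:1} each $\mathcal{D}_{F,\epsilon,h,R}$ is open dense in $\mathbb{E}_B$. Since $C$, $\mathbb{Q}^+$ and $\mathcal{P}$ are countable and $B_0$ has exactly $\kappa$ finite subsets, $|\mathcal{D}| \le \kappa < 2^{\aleph_0}$. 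By Proposition~\ref{propK} the poset $\mathbb{E}_B$ has property~K, hence is ccc, so MA supplies a filter $G \subseteq \mathbb{E}_B$ meeting every member of $\mathcal{D}$, i.e. a $\mathcal{D}$-generic filter. Proposition~\ref{genemb} then produces a unital embedding of $B$ into $\cQ(H)$; composing with the embedding $A \hookrightarrow B$ of Proposition~\ref{fvlemma} yields the desired embedding of $A$.

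This is essentially an assembly of results already in place, so there is no single difficult step. The only points that require care are the cardinality estimate $|\mathcal{D}| \le \kappa$, which is precisely what makes MA applicable here, and the separation of the separable case, where $\mathbb{E}_B$ carries no information and one falls back on the classical amplification embedding.
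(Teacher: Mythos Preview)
Your proof is correct and follows essentially the same route as the paper: reduce to a unital simple algebra via Proposition~\ref{fvlemma}, observe that the family $\mathcal{D}$ has size at most the density character, invoke ccc from Proposition~\ref{propK}, apply MA to obtain a $\mathcal{D}$-generic filter, and conclude via Proposition~\ref{genemb}. The only difference is that you separate out the separable case, which is harmless but unnecessary: when $B$ is separable $\mathcal{D}$ is countable and the existence of a $\mathcal{D}$-generic filter follows already from the Rasiowa--Sikorski lemma (equivalently, MA for countably many dense sets is a theorem of ZFC), so the general argument covers that case as well.
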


\begin{proof}
By Proposition \ref{fvlemma} it suffices to prove the statement for unital and simple \cstar-algebras.  For any unital and simple  \cstar-algebra $A$, the collection $\mathcal{D}$
of open, dense subsets of $\mathbb{E}_A$ (as defined prior to Proposition \ref{genemb}) has cardinality equal to the density character of $A$. Since the poset $\mathbb{E}_A$ is ccc, this implies that if the density character of~$A$ is strictly less than $2^{\aleph_0}$, then Martin's Axiom ensures the existence of a $\mathcal{D}$-generic filter for $\mathbb{E}_A$ and the corollary follows by Proposition \ref{genemb}.
\end{proof}

\section{Concluding remarks} \label{sctn5}
The Calkin algebra is a fascinating object and 
our result is the first step in what we believe is a very promising direction of its study. 
A further step would be to have a simpler forcing notion in place of  $\mathbb{E}_A$ defined in the course of the 
proof  of Theorem~\ref{T1}. 
This would allow for an analysis of the names for \cstar-subalgebras of $\cQ(H)$ and better control 
of the structure of $\cQ(H)$ in the extension. 
In particular, it would be a step towards proving that a given \cstar-algebra can be 
 `gently placed' into $\cQ(H)$ 
(cf. \cite[p. 17--18]{woodin1984discontinuous}). In this regard, we conjecture the following.
\begin{conjecture}
Let $A$ be an abelian and nonseparable $\mathrm{C}^*$-algebra.
If the density character of $A$ is greater than $2^{\aleph_0}$, then $\mathbb{E}_A$ forces that $A$ does not embed into $\ell_{\infty}/c_0.$
\end{conjecture}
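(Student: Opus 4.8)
The plan is to work in the generic extension $V^{\mathbb{E}_A}$. Since $\mathbb{E}_A$ is ccc (Proposition \ref{propK}), all cardinals and cofinalities are preserved; moreover the density character of $A$ is preserved and remains equal to $\kappa$, its ground-model value, because $\varepsilon$-separated sets are absolute and the density character of a \cstar-algebra is the supremum over $n$ of the cardinalities of the $1/n$-separated subsets of its unit ball. Observe first that a crude cardinality argument cannot work: $\mathbb{E}_A$ forces $A$ into $\mathcal{Q}(H)$ (Proposition \ref{genemb}) and $|A|\geq\kappa>2^{\aleph_0}$ holds in $V$, so $\mathbb{E}_A$ forces $2^{\aleph_0}\geq\kappa$, whence in the extension $\ell_\infty/c_0\cong C(\beta\mathbb{N}\setminus\mathbb{N})$ has density character $\geq\kappa$ as well and there is, a priori, room to fit a copy of $A$. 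The proof must therefore exploit the combinatorial nature of $\mathbb{E}_A$.

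The first technical step is to recognize $\mathbb{E}_A$ as a Cohen-type forcing indexed by $A$. Each condition mentions only a finite set $F_p\Subset A$, so $\mathbb{E}_A=\bigcup_{F\Subset A}\mathbb{E}_A\restriction F$, where $\mathbb{E}_A\restriction F=\{q:F_q\subseteq F\}$ is a countable, atomless poset --- the promise of item \ref{promise} in Definition \ref{def:1} is exactly what rules out atomic conditions --- hence forcing equivalent to Cohen forcing. Since antichains of $\mathbb{E}_A$ are countable and each involves only countably many elements of $A$, one shows that for every countable $Y\subseteq A$ the set $\{q:F_q\subseteq Y\}$ is a complete suborder of $\mathbb{E}_A$, and that every real of $V^{\mathbb{E}_A}$ --- in particular every element of the corona $\ell_\infty/c_0$ as computed there --- already belongs to the corresponding copy of $\ell_\infty/c_0$ computed in $V[G\restriction Y]$ for some countable $Y\subseteq A$.

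Now suppose toward a contradiction that $\mathbb{E}_A$ forces the existence of an injective $\ast$-homomorphism $\phi\colon A\to\ell_\infty/c_0$. Fix in $V$ a dense $(\mathbb{Q}+i\mathbb{Q})$-subalgebra $\{a_\xi:\xi<\kappa\}$ of $A$. In the extension $\phi(a_\xi)$ lies in $(\ell_\infty/c_0)^{V[G\restriction Y_\xi]}$ for some countable $Y_\xi\subseteq A$. Assuming $\kappa$ regular uncountable (the singular case should be treated separately), the $\Delta$-System Lemma (Lemma \ref{delta}) yields an unbounded $I\subseteq\kappa$ and a countable root $R\subseteq A$ with $Y_\xi\cap Y_\eta=R$ for distinct $\xi,\eta\in I$; then over $W:=V[G\restriction R]$ the blocks $\{G\restriction(Y_\xi\setminus R):\xi\in I\}$ are mutually Cohen-generic and $\phi(a_\xi)\in(\ell_\infty/c_0)^{W[G\restriction(Y_\xi\setminus R)]}$.

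The last step, and the one I expect to be the main obstacle, is to derive a contradiction from this mutual genericity together with the fact that $C^\ast(\{a_\xi:\xi\in I\})$ is abelian and of density character $\kappa$. The heuristic is that a family sitting in mutually Cohen-generic pieces over $W$ must be ``free'' inside $C(\beta\mathbb{N}\setminus\mathbb{N})$ in a quantitative sense --- the Cohen blocks prevent any uncountable subfamily of the $\phi(a_\xi)$ from being captured within a fixed separable subalgebra of $\ell_\infty/c_0$ --- whereas a nonseparable \emph{abelian} algebra is, in the same quantitative sense, far from free, so the $a_\xi$ cannot have images spread out this way. Making this dichotomy precise is the heart of the matter; it is of exactly the type underlying Kunen's argument \cite{kunenthesis} and its adaptation in \cite[Section 2.5]{Andrea.PhD} showing that no chain of projections of order type $\omega_2$ appears in the Calkin algebra after adding Cohen reals. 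One should be aware, however, that the literal statement requires care: for $A=C([0,1]^\kappa)$ there is in ZFC an embedding $C([0,1]^{2^{\aleph_0}})\hookrightarrow C(\beta\mathbb{N}\setminus\mathbb{N})$ obtained from an independent family of size $2^{\aleph_0}$ (giving continuous surjections $\beta\mathbb{N}\setminus\mathbb{N}\twoheadrightarrow 2^{2^{\aleph_0}}\twoheadrightarrow[0,1]^{2^{\aleph_0}}$), so the ``free versus coherent'' dichotomy cannot hold for \emph{every} abelian $A$; the conjecture should presumably be supplemented by a Luzin-type hypothesis on $A$ --- for instance that $A$ contains a chain of projections, or of self-adjoint elements, of order type $\omega_2$ --- for which the Cohen-indestructibility argument of \cite[Section 2.5]{Andrea.PhD} applies verbatim.
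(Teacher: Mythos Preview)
This statement is a \emph{conjecture} in the paper, appearing in the concluding remarks (\S\ref{sctn5}); the paper offers no proof, so there is nothing to compare your attempt against. What you have written is not a proof but a programme, and you yourself flag its central gap in the last paragraph. Two further points deserve emphasis.

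First, your key structural claim --- that for every countable $Y\subseteq A$ the suborder $\{q\in\mathbb{E}_A:F_q\subseteq Y\}$ is a \emph{complete} suborder of $\mathbb{E}_A$ --- is precisely what the paper says it does \emph{not} know how to obtain. In \S\ref{sctn5} (``Complete embeddings'') the authors remark that the association $A\mapsto\mathbb{E}_A$, unlike $E\mapsto\mathcal{H}_E$ or $B\mapsto\mathbb{P}_B$, is not known to send inclusions to complete embeddings, and they observe the analogous failure already for $\mathbb{QD}_A$. The compatibility relation in $\mathbb{E}_A$ is not local: Lemma~\ref{lemma:3} and Proposition~\ref{propK} require the existence of a locally trivial embedding $\Theta\colon C^\ast(F_p\cup F_q)\to\mathcal{Q}(H)$ and an application of Voiculescu's theorem to align the promises, so whether $p$ and $q$ are compatible may depend on the ambient algebra. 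Without the complete-suborder property your decomposition of reals in $V^{\mathbb{E}_A}$ into countable Cohen pieces, and hence the entire mutual-genericity argument, does not get off the ground.

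Second, your own counterexample $A=C([0,1]^\kappa)$ is more damaging than you indicate: it shows that the conjecture as stated is likely false, not merely that your method is incomplete. In the extension $2^{\aleph_0}\geq\kappa$, and the ZFC surjection $\beta\mathbb{N}\setminus\mathbb{N}\twoheadrightarrow 2^{2^{\aleph_0}}\twoheadrightarrow[0,1]^\kappa$ gives an embedding of $A$ into $\ell_\infty/c_0$ regardless of what $\mathbb{E}_A$ does. So any correct version of the conjecture must restrict the class of abelian $A$; your suggestion of a chain-type hypothesis is reasonable, but at that point one is proposing a different statement rather than proving the one given.
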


A closely related issue is whether the embedding provided by $\mathbb{E}_A$ is `liftable' to $\cB(H)$. 
A unital,  injective   $^*$-homomorphism $\Phi\colon A\to \cQ(H)$ is naturally identified with an 
extension $0\to \mathcal{K}(H)\to D\to A\to 0$, where $D=\pi^{-1}[\Phi[A]]$ (see \cite[\S 2.5]{higsonroe}). 
An extension is \emph{trivial} if there exists a unital $^*$-homomorphism $\Psi\colon A\to D$
such that $\pi\circ \Psi=\id_A$.  If the \cstar-algebra $A$ is abelian, the triviality of this extension 
is equivalent to the image of $A$ having an abelian lift to $\cB(H)$. 
While our generic embeddings are easily seen to be  trivial when restricted to separable subalgebras
of $A$, we conjecture that this is not the case for the nonseparable ones, even in the abelian
case (see also \cite{bicekos} and \cite{obstructions}).

\begin{conjecture}Suppose that $A$ is a nonseparable abelian \cstar-algebra. 
The poset $\mathbb{E}_A$ forces that if $B$ is a nonseparable subalgebra of the image of $A$ under the generic embedding, then $B$ does not admit an abelian lift.
\end{conjecture}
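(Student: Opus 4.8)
\emph{Proof strategy (plan).} We sketch an approach; the decisive step is conjectural precisely because it is hard, so this is a roadmap rather than a proof.

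\textbf{Step 1: reformulation as a coherence problem.} For an abelian \cstar-algebra $B\subseteq\cQ(H)$, having an abelian lift is equivalent to the splitting of the extension $0\to\mathcal K(H)\to\pi^{-1}[B]\to B\to 0$, since a unital $*$-homomorphism out of an abelian algebra has abelian range. Let $\Psi_G\colon A\to\cB(H)$ be the $*$-linear, self-adjoint, only \emph{approximately} multiplicative map $\Psi_G(a)=\lim_{p\in G}\psi_p(a)$ (strong limit) with $\pi\circ\Psi_G=\Phi_G$ produced in Proposition~\ref{genemb}. If $B$ is a nonseparable subalgebra of $\Phi_G[A]$, then $A_0:=\Phi_G^{-1}[B]$ is a nonseparable subalgebra of $A$ (as $\Phi_G$ is isometric onto its range), and an abelian lift of $B$ is the same as a $*$-homomorphism $\Psi\colon A_0\to\cB(H)$ with $\pi\circ\Psi=\Phi_G\restriction A_0$, i.e.\ a compact-valued \emph{correction} $e:=\Psi-\Psi_G\restriction A_0\colon A_0\to\mathcal K(H)$ which is $\mathbb C$-linear, $*$-preserving, satisfies $e(1)=0$, and solves
\[
e(ab)=c(a,b)+\Psi_G(a)e(b)+e(a)\Psi_G(b)+e(a)e(b),\qquad c(a,b):=\Psi_G(a)\Psi_G(b)-\Psi_G(ab),
\]
where each $c(a,b)\in\mathcal K(H)$ with $\pi(c(a,b))=0$. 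So it suffices to show that $\mathbb{E}_A$ forces that no such $e$ exists for any nonseparable $A_0$. Since a correction restricts to subalgebras and the embedding is trivial on separable subalgebras, we may work with a subalgebra of density character $\aleph_1$; using that $\mathbb{E}_A$ has property K (Proposition~\ref{propK}) and the $\Delta$-system Lemma~\ref{delta}, we may fix in $V$ an uncountable family $\langle a_\alpha:\alpha<\aleph_1\rangle$ of distinct elements of a dense $(\bbQ+i\bbQ)$-subalgebra of $A$ together with pairwise compatible conditions $p_\alpha$ forcing $a_\alpha$ into the nonseparable subalgebra on which $e$ is supposed to live.

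\textbf{Step 2: niceness of the name and uniformization.} Because $\mathbb{E}_A$ is ccc, a name $\dot e(a_\alpha)$ for an element of the Polish space $\mathcal K(H)$ is equivalent to a nice name, hence is determined by countably many maximal antichains, i.e.\ by a countable subset of $\mathbb{E}_A$; moreover $\|\dot e(a_\alpha)\|\le\tfrac{5}{2}\|a_\alpha\|$ is forced. Working inside a sufficiently large $H(\theta)$ and a chain of countable elementary submodels, and applying Lemma~\ref{delta} to the sets $F_{p_\alpha}$ together with the countable supports of the names $\dot e(a_\alpha)$ and $\dot e(a_\alpha a_\beta)$, thin $\langle a_\alpha\rangle$ to an uncountable subsequence along which the conditions $p_\alpha$, the supports of these names, and the forced ``behaviour of $\dot e$ on a common countable root'' form a $\Delta$-system with root inside a single fixed $M\prec H(\theta)$ --- the same uniformization scheme used in the proofs of Propositions~\ref{prop:1} and \ref{propK}. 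The upshot: for $\alpha\neq\beta$ off $M$, the values $\dot e(a_\alpha),\dot e(a_\beta),\dot e(a_\alpha a_\beta)$ are already committed by countably much information residing in $M$ together with the $\alpha$- and $\beta$-specific pieces.

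\textbf{Step 3: genericity defeats the committed correction (the crux).} Fix $\alpha\neq\beta$ from the thinned family and a common extension $r\le p_\alpha,p_\beta$ (these exist by property K); $r$ forces $a_\alpha,a_\beta,a_\alpha a_\beta$ into the nonseparable subalgebra. Since $p_\alpha$ and $p_\beta$ agree on $M$, the compatibility Lemma~\ref{lemma:3}, whose hypotheses are arranged via Voiculescu-type rotations (Corollary~\ref{voic}), gives a common extension $s$ in whose construction one has genuine freedom in how $\psi_s$ behaves on $a_\alpha$, $a_\beta$ and $a_\alpha a_\beta$ above the common part $h_{p_\alpha}^-$. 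The plan is to use that freedom to steer the generic so that, below $s$, the compact operator $c(a_\alpha,a_\beta)$ acquires a ``large'' feature --- e.g.\ a norm-$\approx 1$ component on a subspace orthogonal to the committed part of $\dot e$ --- that the countably supported name $\dot e$ cannot have anticipated; then, since the cross terms $\Psi_G(a_\alpha)e(a_\beta),e(a_\alpha)\Psi_G(a_\beta),e(a_\alpha)e(a_\beta)$ are pinned down by the committed $\dot e(a_\alpha),\dot e(a_\beta)$, the cocycle equation forces $\dot e(a_\alpha a_\beta)$ to a value incompatible with its commitment. Arranging this for uncountably many pairs $(\alpha,\beta)$ at once yields the contradiction. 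The $*$-preservation and $(\bbQ+i\bbQ)$-linearity requirements on $e$ are dispatched by the same scheme applied to $\Delta^{\cdot,*}$ and $\Delta^{\cdot,+}$.

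\textbf{Main obstacle.} The genuinely hard point is Step~3: making rigorous that a ccc-generic choice of the twists $c(a_\alpha,a_\beta)$ cannot be matched by a name committed on a fixed countable root, \emph{uniformly over an uncountable family of pairs}. In the ccc proof a single common extension sufficed; here one must control the generic on uncountably many pairs simultaneously, and the commutativity of $A$ must be exploited to guarantee that the products $a_\alpha a_\beta$ --- and the operators $\psi_s(a_\alpha)\psi_s(a_\beta)$ they govern --- carry information genuinely new relative to $M$. A promising packaging, and probably the actual route, is to bypass the direct argument and instead show that $\mathbb{E}_A$ adds inside $\Phi_G[A]$ a \emph{nontrivial coherent family of projections} (a Luzin-type gap) whose nontriviality is a generic, ccc-stable phenomenon and which directly obstructs an abelian lift of any nonseparable subalgebra containing it, in the spirit of the $\lim^1$/gap obstructions of \cite{bicekos} and \cite{obstructions}.
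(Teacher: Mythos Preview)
The statement you are attempting is a \emph{conjecture} in the paper, not a theorem; the paper offers no proof and explicitly leaves it open. There is therefore nothing to compare your proposal against. You correctly flag your write-up as a roadmap rather than a proof, and you correctly isolate the decisive difficulty in Step~3.

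A few remarks on the roadmap itself. Steps~1 and~2 are sound: the reduction to a density-$\aleph_1$ subalgebra is legitimate (an abelian lift of $B$ restricts to any subalgebra, so it suffices to produce a single $\aleph_1$-sized subalgebra with no lift), and the ccc/nice-name/$\Delta$-system bookkeeping is standard. One point to tighten: the subalgebra $B$ is a name in the extension, so the ``uncountable family $\langle a_\alpha\rangle$ in $V$'' must be extracted from a name for a dense subset of $B$ via a reflection argument, not assumed to sit in $V$ outright.

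Step~3, as you say, is where the argument is missing. The specific difficulty is sharper than you indicate: you need a condition $s$ that simultaneously (i) extends $p_\alpha,p_\beta$, (ii) decides enough of the countably many antichains determining $\dot e(a_\alpha),\dot e(a_\beta),\dot e(a_\alpha a_\beta)$ to pin those values down, and (iii) still retains freedom to perturb $c(a_\alpha,a_\beta)$ on a subspace orthogonal to everything already committed. But (ii) and (iii) pull in opposite directions: deciding the antichains for $\dot e$ may force $h_s$ large enough that the ``free'' region above $h_s^-$ no longer influences $c(a_\alpha,a_\beta)$ in a way the committed $\dot e$ cannot absorb. Your final paragraph points to what is probably the right packaging: rather than a direct diagonalization against names, show that the generic embedding plants a concrete combinatorial obstruction (a Luzin-type family of projections, or a twist in the sense of \cite{bicekos}, \cite{obstructions}) inside $\Phi_G[A]$ whose nontriviality is itself a ccc-absolute statement. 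That is consistent with how the paper frames the conjecture, but it remains genuinely open.
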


We now propose   related  directions of study,  taking inspiration
from the commutative setting.

\subsection{The Question of Minimality of Generic Embeddings} \label{S.HE}
From the very beginnings of forcing, it has been known that a given partial ordering $E$
can be embedded into $\cP(\bbN)/\Fin$ by a ccc forcing. The simplest such forcing
notion was denoted  $\cH_E$ and studied in  \cite{Fa:Embedding} where it was proved that $\cH_E$ embeds
$E$ into $\cP(\bbN)/\Fin$ 
in a minimal way: If a cardinal $\kappa>2^{\aleph_0}$ is such that 
$E$ does not have a chain of order type  $\kappa$ or~$\kappa^*$,
then in the forcing 
extension  $\cP(\bbN)/\Fin$ does not have  chains of order type $\kappa$ or $\kappa^*$ (this is a consequence of \cite[Theorem~9.1]{Fa:Embedding}).
In addition, if $\min(\kappa,\lambda)>2^{\aleph_0}$ and $E$ does not have $(\kappa,\lambda)$-gaps\footnote{Given two cardinals $\kappa$ and $\lambda$, a \emph{$(\kappa, \lambda)$-gap}
in a poset $\mathbb{P}$ is composed by a strictly increasing sequence $\set{f_\alpha: \alpha < \kappa}\subseteq \mathbb{P}$ and
a strictly decreasing sequence $\set{g_\beta: \beta < \lambda} \subseteq \mathbb{P}$ such that
$f_\alpha < g_\beta$ for all $\alpha < \kappa$ and $\beta < \lambda$, and moreover such
that there is no $h \in \mathbb{P}$ greater than all $f_\alpha$'s and smaller than all $g_\beta$'s.}
then in the forcing 
extension by $\cH_E$ there are no   $(\kappa,\lambda)$-gaps (\cite[Theorem~9.2]{Fa:Embedding})
 in $\cP(\bbN)/\Fin$.  We do not know whether analogous results apply to $\mathbb{E}_A$ or some variant thereof. In the noncommutative setting, the following question is even more natural.

\begin{question} Consider the class $\bbE=\bbE(\cQ(H))$ 
of all \cstar-algebras that embed into the Calkin algebra. 
Can any notrivial closure properties of $\bbE$ be proved in ZFC? 
For example: 
\begin{enumerate} 
\item \label{1Q} Do $A\in \bbE$ and $B\in \bbE$ together imply $A\otimes B$ in $\bbE$ (take the spatial tensor product, 
or even the algebraic tensor product)? 
\item \label{2Q} If $A_n\in \bbE$ for $n\in \bbN$ and $A=\lim_n A_n$, is $A\in \bbE$? 
\end{enumerate}
\end{question}

We conjecture that the answers to both \eqref{1Q} and \eqref{2Q} are negative. 
The analogous class $\bbE_{\Fin}$ of all linear orderings that embed into $\cP(\bbN)/\Fin$ 
does not seem to have any nontrivial closure properties provable in ZFC. 
For example, it is relatively consistent with ZFC that there exists a linear ordering~$L$
and a partition $L=L_1\sqcup L_2$ such that $L_1\in \bbE_{\Fin}$ 
and $L_2\in \bbE_{\Fin}$ but $L\notin \bbE_{\Fin}$ (\cite[Proposition~1.4]{Fa:Embedding}).

\subsection{Complete embeddings} 
Given a forcing notion $\bbP$,  its subordering $\bbP_0$ is  a \emph{complete subordering} of~$\bbP$ if for every generic filter $G\subseteq\bbP_0$ one can define a forcing notion $\bbP/G$ such that 
$\bbP$ is forcing equivalent to the two-step iteration $\bbP_0*\bbP/G$ 
(for an intrinsic characterization of this relation see \cite[Definition III.3.65]{kunen}). 

A salient  property of the forcing notion $\cH_E$ (\S\ref{S.HE}) is that  
 $E\mapsto \cH_E$ is a covariant functor 
 from the category of partial orderings and order-isomorphic embeddings as maps 
into the category of forcing notions with complete embeddings as morphisms. 
This is a consequence of \cite[Proposition~4.2]{Fa:Embedding}, where  
the compatibility relation in $\cH_E$ has been shown to be `local' in the sense that 
the conditions $p$ and $q$ are compatible in 
$\cH_{\supp(p)\cup \supp(q)}$
if and only if they are 
compatible in $\cH_E$.

Analogous arguments show that the mapping $B\mapsto\mathbb{P}_B$ defined on Section~\ref{sctn3.1} is  a covariant functor from the category of Boolean algebras and injective homomorphisms into the category of ccc forcing notions with complete embeddings as morphisms. 
As a result, if $D$ is a Boolean subalgebra of $B$ and $G$ is $\mathbb{P}_D$-generic, then  forcing with the poset  $\mathbb{P}_B$ is equivalent to first forcing with $\mathbb{P}_D$ and then with~$\mathbb{P}_B/G.$

It is not difficult to prove that the association  $A\mapsto  \bbQD_A$
as in Proposition~\ref{qdt1} does not have this property, as
$\bbQD_{\C}$, naturally considered as a subordering of $\bbQD_{M_2(\bbC)}$, 
is not a complete subordering. 
More generally,  if  $m$ is a proper divisor of $n$ then 
the poset $\bbQD_{M_m(\C)}$ is not a complete subordering of $\bbQD_{M_n(\C)}$. 
We do not know whether there is an alternative 
definition of a functor $A\mapsto \bbQD_A$ that satisfies the conclusion of Proposition~\ref{qdt1}. 
The latter remark also  applies to the poset $\bbE_A$ given in Theorem~\ref{T1}.

\subsection{$2^{\aleph_0}$-universality} 
One line of research building on  Theorem \ref{indep} would be to understand which \cstar-algebras
of density character $2^{\aleph_0}$ embed into the Calkin algebra. Before discussing this matter,
we introduce a definition.
Given a cardinal $\lambda$, a \cstar-algebra $A$ is \emph{(injectively) $\lambda$-universal}
if it has density character $\lambda$ and all \cstar-algebras of density character $\lambda$ embed into~$A$.
By  \cite[Theorem~2.3 and Remark 2.10]{JunPis}, 
there is no $\kappa$-universal \cstar-algebra in any density character $\kappa<2^{\aleph_0}$ 
The results in \cite{farah2017calkin} entail that the $2^{\aleph_0}$-universality of the Calkin algebra is independent
from ZFC. On the one hand
CH implies that $\mathcal{Q}(H)$ is $2^{\aleph_0}$-universal.
Conversely, the Proper Forcing Axiom implies that $\cQ(H)$
is not $2^{\aleph_0}$-universal because some abelian \cstar-algebras of density $2^{\aleph_0}$ do not embed into it (see \cite[Corollary 5.3.14 and Theorem 5.3.15]{V.PhDThesis}); see also
Theorem~\ref{indep}).
Can the Calkin algebra be $2^{\aleph_0}$-universal even when the Continuum Hypothesis fails? 
The analogous fact for $\cP(\bbN)/\Fin$ and linear orders, namely that there is a model of ZFC
where CH fails and all linear orders of size $2^{\aleph_0}$
 embed into $\cP(\bbN)/\Fin$, has been
proved in \cite{laver1979linear} (see also \cite{baum} for the generalization
to Boolean algebras). We do not know whether these
techniques can be generalized to provide a model in which CH fails and 
the  Calkin algebra is a $2^{\aleph_0}$-universal \cstar-algebra, but the fact that $\bbE_A$ has property K 
is a step (possibly small) towards such a model. A poset with property K is \emph{productively ccc}, 
in the sense that its product with any ccc poset is still ccc. 
A salient feature of the forcing iterations used in both  \cite{laver1979linear} and   \cite{baum}
is that they are not `freezing' any gaps in 
$\bbN^\bbN/\Fin$ and $\cP(\bbN)/\Fin$.
(A poset $\bbP$ \emph{freezes} a gap  
if it cannot be split in a further forcing extension without collapsing~$\aleph_1$.)

\begin{lemma} For any \cstar-algebra $A$, the poset  $\bbE_A$ cannot freeze any gaps in $\cP(\bbN)/\Fin$. 
\end{lemma}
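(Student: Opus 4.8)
The plan is to show that $\bbE_A$ has property~K and that property~K is incompatible with freezing gaps in $\cP(\bbN)/\Fin$; then the lemma is immediate from Proposition~\ref{propK}. Recall (see \S\ref{sctn5}) that a poset $\bbP$ \emph{freezes} a gap $(\set{a_\alpha},\set{b_\beta})$ in $\cP(\bbN)/\Fin$ if, after forcing with $\bbP$, no forcing extension can interpolate the gap without collapsing $\aleph_1$; in particular, a frozen gap remains unfilled and moreover stays unfillable by any ccc forcing (ccc forcings preserve $\aleph_1$). The key point is a classical dichotomy: a gap in $\cP(\bbN)/\Fin$ that is \emph{not} a Hausdorff gap (i.e.\ not a $(\omega_1,\omega_1)$-gap of the special combinatorial type) can always be filled by a ccc forcing; and a Hausdorff gap, while unfillable by ccc forcing, cannot be \emph{frozen} by a productively-ccc forcing, because if $\bbP$ has property~K one can run the standard gap-filling poset \emph{after} forcing with $\bbP$ and the two-step iteration is still ccc, so $\aleph_1$ is preserved, contradicting the definition of freezing.

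\textbf{Key steps, in order.} First I would recall that $\bbE_A$ has property~K (Proposition~\ref{propK}), hence is \emph{productively ccc}: $\bbE_A\times\bbQ$ is ccc for every ccc poset $\bbQ$ (a property~K poset stays ccc when multiplied with any ccc poset, by a $\Delta$-system plus pigeonhole argument on an alleged uncountable antichain in the product). Second, suppose toward a contradiction that $\bbE_A$ freezes a gap $\Gamma$ in $\cP(\bbN)/\Fin$. Since freezing in particular means $\Gamma$ is not filled after forcing with $\bbE_A$ and stays unfilled after any further ccc forcing, $\Gamma$ must already in the ground model be an unfillable-by-ccc gap, i.e.\ (by the standard theory) $\Gamma$ is equivalent to a Hausdorff $(\omega_1,\omega_1)$-gap. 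Third, consider the natural ccc (indeed $\sigma$-centered) poset $\bbQ_\Gamma$ whose conditions are finite approximations to an interpolating set for $\Gamma$ — this is the poset that, when $\Gamma$ is \emph{not} Hausdorff, fills the gap, and which in general is ccc precisely because $\Gamma$ is a gap. Fourth, form the two-step iteration $\bbE_A * \dot{\bbQ}_\Gamma$. Because $\bbE_A$ has property~K and $\bbQ_\Gamma$ is ccc (its ccc-ness, being absolute enough, is forced by $\bbE_A$), the iteration is ccc, hence preserves $\aleph_1$. But forcing with $\bbE_A * \dot{\bbQ}_\Gamma$ adds an interpolant for $\Gamma$ — splitting the gap — while preserving $\aleph_1$, which directly contradicts the assumption that $\bbE_A$ froze $\Gamma$.

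\textbf{The main obstacle.} The delicate point is Step~3 together with the precise bookkeeping in Step~2: I must make sure that the relevant gap-filling poset $\bbQ_\Gamma$ is genuinely ccc \emph{in the extension $V[\bbE_A]$}. If $\Gamma$ was a Hausdorff gap in $V$, it may fail to be Hausdorff in $V[\bbE_A]$ only if $\bbE_A$ added new reals threading it — but $\bbE_A$ is ccc, hence cannot fill $\Gamma$, so $\Gamma$ remains an (unfilled) gap there, and the finite-approximation poset for \emph{any} gap is ccc by the usual amalgamation argument (two conditions with the same "stem" and whose finite side-data cohere are compatible). One should double-check that "freezing" is being used in the sense that rules out \emph{all} ccc extensions, not merely $\bbE_A$ itself; with that reading — which is the standard one, since a frozen gap cannot be split without collapsing $\aleph_1$ and ccc forcings preserve $\aleph_1$ — the argument closes. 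I expect the verification that $\bbE_A \ast \dot{\bbQ}_\Gamma$ remains ccc, using property~K of the first factor, to be the one place needing a careful (though routine) $\Delta$-system argument; everything else is a direct appeal to Proposition~\ref{propK} and the classical theory of gaps in $\cP(\bbN)/\Fin$.
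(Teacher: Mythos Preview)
Your argument contains a genuine error in steps 3--4. The claim that the finite-approximation gap-filling poset $\bbQ_\Gamma$ is ccc (let alone $\sigma$-centered) for \emph{any} gap is false: this is precisely what fails for Hausdorff (indestructible) gaps. Kunen's dichotomy says that for an $(\omega_1,\omega_1)$-gap, either the natural filling poset is ccc, or the gap is indestructible in every $\aleph_1$-preserving extension; these alternatives are mutually exclusive. So once you have reduced in step 2 to the case where $\Gamma$ is unfillable-by-ccc in $V$, you cannot then invoke a ccc filling poset for it---none exists. (The ``$\sigma$-centered'' heuristic breaks down because conditions with the same stem but side constraints $F_1,F_2$ need not amalgamate: the cross-terms $a_\alpha\cap b_\beta$ with $\alpha\in F_1$, $\beta\in F_2$ can obstruct any common extension, and for a Hausdorff gap this yields an uncountable antichain.)

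More importantly, the Hausdorff case is not something you need to handle: a gap that is already indestructible in $V$ is trivially ``frozen'' by every forcing, including the trivial one, so the lemma is only asserting that $\bbE_A$ does not freeze gaps that were splittable to begin with. The paper's proof makes this explicit: one starts from a gap that \emph{can} be split without collapsing $\aleph_1$, applies Kunen's result to obtain a ccc poset $\bbQ$ that splits it, and then uses property~K of $\bbE_A$ (hence productive ccc) to conclude that $\bbQ$ remains ccc in $V^{\bbE_A}$ and still splits the gap there. This is exactly the reasoning you sketch---without justification---in step 2 when you argue that freezing would force $\Gamma$ to be unfillable-by-ccc already in $V$. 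That contrapositive \emph{is} the whole proof; your subsequent case analysis is both unnecessary and incorrect.
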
 

\begin{proof} 
Every gap in $\cP(\bbN)/\Fin$ or $\bbN^\bbN/\Fin$  that can be split without collapsing $\aleph_1$ can be split by a ccc forcing. This is well-known result of Kunen (\cite{Ku:Gaps}) 
not so easy to find in the literature.\footnote{See e.g., \cite[Fact on p. 76]{tofa}. 
It is not difficult to see that a `Suslin gap' as in \cite[Definition~9.4]{tofa} can be split by a natural ccc forcing whose conditions are finite 
$K_0$-homogeneous sets.}
Therefore if a gap can be split by a ccc forcing $\bbP$, 
then a poset which  freezes it destroys the ccc-ness of $\bbP$. 
But $\bbE_A$ has property K, and is therefore productively ccc. 
\end{proof}

While the gap spectra of $\cP(\bbN)/\Fin$ and $\bbN^\bbN/\Fin$ are closely related, the gap spectrum of the poset of projections in the Calkin algebra is  more 
complicated. The following proposition was proved, but not stated, in \cite{Za-Av:Gaps}, 
and we include a proof for reader's convenience.

\begin{theorem}  Martin's Axiom implies that the poset of projections in the 
 Calkin algebra contains a $(2^{\aleph_0}, 2^{\aleph_0})$-gap which cannot be frozen. 
\end{theorem}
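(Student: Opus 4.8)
The plan is to realize the gap inside the canonical copy of $\cP(\bbN)/\Fin$ in the projection poset of $\cQ(H)$, and thereby reduce the statement to a purely combinatorial one. Fix an orthonormal basis $(e_n)_{n\in\bbN}$ of $H$ and, for $X\subseteq\bbN$, let $P_X\in\cB(H)$ be the orthogonal projection onto $\overline{\Span}\{e_n:n\in X\}$. The map $[X]\mapsto\pi(P_X)$ is an order embedding of $\cP(\bbN)/\Fin$ into the poset of projections of $\cQ(H)$, and the first point is that it \emph{reflects gaps}: a pregap $(\langle X_\alpha\rangle_\alpha,\langle Y_\alpha\rangle_\alpha)$ in $\cP(\bbN)/\Fin$ is a gap if and only if its image is a gap in $\cQ(H)$. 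Indeed, if $Q$ is a projection in $\cQ(H)$ with $\pi(P_{X_\alpha})\le\pi(Q)\le\pi(P_{Y_\alpha})$ for all $\alpha$, lift it to a projection in $\cB(H)$ and set $d(n)=\langle Qe_n,e_n\rangle\in[0,1]$; since $P_{X_\alpha}Q-P_{X_\alpha}$ and $QP_{Y_\alpha}-Q$ are compact and a compact operator maps weakly null sequences to norm null ones, $d(n)\to1$ as $n\to\infty$ through each $X_\alpha$ and $d(n)\to0$ as $n\to\infty$ off each $Y_\alpha$, whence $Z=\{n:d(n)>1/2\}$ satisfies $X_\alpha\subseteq^*Z\subseteq^*Y_\alpha$ for all $\alpha$; the converse is immediate. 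This argument is absolute, so an $\aleph_1$-preserving forcing splits the image gap in $\cQ(H)$ if and only if it splits the pregap in $\cP(\bbN)/\Fin$; it therefore suffices to build, under MA, a $(2^{\aleph_0},2^{\aleph_0})$-gap in $\cP(\bbN)/\Fin$ that cannot be frozen.

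Under MA, $2^{\aleph_0}=\fc$ is regular and $\mathfrak p=\fc$. I would build, by recursion of length $\fc$, a $\subseteq^*$-increasing sequence $\langle X_\alpha:\alpha<\fc\rangle$ and a $\subseteq^*$-decreasing sequence $\langle Y_\alpha:\alpha<\fc\rangle$ with $X_\alpha\subsetneq^*Y_\beta$ throughout, arranging two things at once. First, no $Z\subseteq\bbN$ interpolates: at stage $\delta$ the part constructed so far has size $<\fc=\mathfrak p$, hence admits interpolants, and one diagonalizes past an enumeration of them. Second, the finite-approximation poset $\mathbb S$ with conditions $(s,F,a)$, where $s\colon\dom(s)\to2$ is a finite partial function, $F,a\in[\fc]^{<\omega}$, $s(n)=1$ for $n\in\dom(s)\cap\bigcup_{\alpha\in F}X_\alpha$ and $s(n)=0$ for $n\in\dom(s)\setminus\bigcap_{\beta\in a}Y_\beta$, ordered by extension of $s$ together with enlargement of $F$ and $a$, is $\sigma$-centered. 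The second point is secured by steering the recursion away from Hausdorff-type behaviour, via a $\Delta$-system and uniformization bookkeeping, so that any two conditions with a common stem $s$ are compatible; the partition of $\mathbb S$ by stems then witnesses $\sigma$-centeredness, and a routine density argument shows that the $\mathbb S$-generic produces an interpolant, so $\mathbb S$ splits the gap $\mathcal G=(\langle X_\alpha\rangle,\langle Y_\alpha\rangle)$.

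To see $\mathcal G$ cannot be frozen, let $\mathbb P$ be an arbitrary forcing notion. The poset $\mathbb S$ is built only from $\bbN$, $2$ and finite subsets of the fixed ordinal $\fc$, so it is literally the same object in $V$ and in $V^{\mathbb P}$, and the stem-partition still witnesses $\sigma$-centeredness there; in particular $\mathbb S$ is ccc in $V^{\mathbb P}$. Either $\mathcal G$ is no longer a gap in $V^{\mathbb P}$, in which case $\mathbb P$ trivially does not freeze it; or $(\langle X_\alpha\rangle,\langle Y_\alpha\rangle)$ is still a pregap there, and then forcing with $\mathbb S$ over $V^{\mathbb P}$ preserves $\aleph_1$ and splits $\mathcal G$. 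In both cases $\mathbb P$ fails to freeze $\mathcal G$. Since $\mathbb P$ was arbitrary, $\mathcal G$ cannot be frozen, and by the first paragraph its image under $[X]\mapsto\pi(P_X)$ is a $(2^{\aleph_0},2^{\aleph_0})$-gap in the projection poset of $\cQ(H)$ which cannot be frozen.

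The step I expect to be the main obstacle is the second requirement in the construction: defeating every potential interpolant while keeping the filling poset $\sigma$-centered pulls in opposite directions, since (for instance) a Hausdorff gap cannot be split by any ccc forcing at all, so the recursion must thread carefully between these demands. This is exactly where MA --- through $\mathfrak p=\fc$ --- is needed, and it is the part worked out in \cite{Za-Av:Gaps}.
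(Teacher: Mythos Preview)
Your reduction to $\cP(\bbN)/\Fin$ via diagonal projections, together with the gap-reflection argument in the first paragraph, is correct. However, the paper does not proceed this way, and the core of your argument --- the recursive construction of an unfreezable $(\fc,\fc)$-gap in $\cP(\bbN)/\Fin$ --- is not carried out and is misattributed. The gap produced in \cite{Za-Av:Gaps} does not live in $\cP(\bbN)/\Fin$ at all: it is a gap directly in the projection poset of $\cQ(H)$ whose two sides are analytic, $\sigma$-directed (not well-ordered), and Tukey equivalent to the ideal of Lebesgue null sets. The paper's proof exploits exactly this Tukey type: under MA the additivity of the null ideal equals $\fc$, so each side contains a cofinal chain of length $\fc$ (this is the ``linearization''), and any further ccc forcing that increases the additivity of Lebesgue measure splits the gap. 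No recursion of length $\fc$ and no diagonalization against interpolants is involved.

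There is also a misdiagnosis in your analysis of $\mathbb S$. Same-stem compatibility is automatic and requires no steering: if $(s,F_1,a_1)$ and $(s,F_2,a_2)$ are both conditions then the constraints on $s$ imposed by $F_1\cup F_2$ and $a_1\cup a_2$ are just the union of those imposed separately, so $(s,F_1\cup F_2,a_1\cup a_2)$ is a condition extending both. Thus your $\mathbb S$ is $\sigma$-centered for \emph{any} pregap whatsoever, Hausdorff gaps included. The genuine obstruction is density: the set $\{(s,F,a):\alpha\in F\}$ need not be dense, because if some $n\in\dom(s)\cap X_\alpha$ has $s(n)=0$ (which the constraints allow whenever $n\notin\bigcup_{\alpha'\in F}X_{\alpha'}$), then no extension of $(s,F,a)$ can place $\alpha$ into $F$. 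This, not compatibility, is where the Hausdorff obstruction enters, and it is precisely the point your sketch leaves open. Whether one can build under MA a $(\fc,\fc)$-gap in $\cP(\bbN)/\Fin$ for which all these sets are dense --- and remain so in every forcing extension --- is itself a nontrivial question, and it is not what \cite{Za-Av:Gaps} does.
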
 

\begin{proof} By \cite[Theorem~4]{Za-Av:Gaps}, 
 there exists (in ZFC)  a  gap in this poset whose sides are analytic and $\sigma$-directed.
 This gap cannot be frozen, and Martin's Axiom is used only to `linearize' it.  
 By the  discussion following 
\cite[Corollary~2]{Za-Av:Gaps}, each of the sides of this gap is 
Tukey equivalent to the ideal of Lebesgue measure  zero sets ordered by the inclusion. 
Since  the additivity of the Lebesgue measure can be increased by a ccc poset
(\cite[Lemma~III.3.28]{kunen}), 
Martin's Axiom implies that this gap contains an $(2^{\aleph_0}, 2^{\aleph_0})$-gap
and that any further ccc forcing that increases the additivity of the Lebesgue measure
will split the gap. 
\end{proof}

\subsection*{Acknowledgments}
The authors would like to thank Marton Elekes for a helpful remark, Alessandro Vignati for his useful feedback on the earlier drafts of this paper, as well as the anonymous  referees for suggesting several improvements.

\bibliographystyle{amsalpha}
	\bibliography{Bibliography}
\Addresses

\end{document}